\newenvironment{dedication}
        {\vspace{6ex}\begin{quotation}\begin{center}\begin{em}}
        {\par\end{em}\end{center}\end{quotation}}
\newtheorem{theo}{Theorem}[section]
\newtheorem{lemm}[theo]{Lemma}
\newtheorem{conj}[theo]{Conjecture}
\newtheorem{coro}[theo]{Corollary}
\newtheorem{prop}[theo]{Proposition}
\theoremstyle{remark}
\newtheorem{exam}[theo]{Example}
\newtheorem{defi}[theo]{Definition}
\newcommand{\Z}{\mathbb{Z}}
\title{Vertex cuts}
\author{M.J. Dunwoody and B. Kr\"on}
\begin{document}
\maketitle
\begin {dedication}
For Wolfgang Woess on his 60th birthday
\end {dedication}

\begin{abstract}
Given a connected graph, in many cases it is possible to construct a structure tree that provides information about
the ends of the graph or its connectivity.  For example Stallings' theorem on the structure of groups with more than one end
can be proved by analyzing the action of the group on a structure tree and Tutte used a structure tree to investigate finite $2$-connected graphs,
that are not $3$-connected.
Most of these structure tree theories have been  based on edge cuts, which are components of the graph obtained
by removing finitely many edges.
A new axiomatic theory is described here using vertex cuts,  components of the graph obtained by removing finitely many vertices.
This generalizes  Tutte's decomposition of 2-connected graphs to $k$-connected graphs for any $k$, in finite and infinite graphs.
The theory can be applied to non-locally finite graphs with more than one vertex end, i.e.\ ends that can be separated by removing a finite number of vertices.  This gives a decomposition for a group acting on such a graph,  generalizing Stallings' theorem. Further applications include the classification of distance transitive graphs and $k$-CS-transitive graphs.\end{abstract}

\newcommand{\N}{\mathbb{N}}
\newcommand{\ca}{\mathcal{A}}
\newcommand{\cn}{\mathcal{N}}
\newcommand{\cc}{\mathcal{C}}
\newcommand{\cL}{\mathcal{L}}

\newcommand{\cs}{\mathcal{S}}
\newcommand{\ce}{\mathcal{E}}
\newcommand{\cu}{\mathcal{U}}
\newcommand{\ci}{\mathcal{I}}
\newcommand{\cj}{\mathcal{J}}
\newcommand{\ck}{\mathcal{K}}
\newcommand{\cb}{\mathcal{B}}
\newcounter{fig}
\setcounter{fig}{0}

\section{Introduction}

A connected simple graph $X=(VX,EX)$  is said to be $n$-\emph{connected} if
for every pair $u, v$ of distinct vertices there are $n$ paths joining $u$ to $v$ such that every vertex in $VX\setminus\{u,v\}$ 
lies on at most one of the paths.

\begin{exam}\label{exam:1-connect}
If $X$ is not $2$-connected then it has \emph{cut-points}, i.e.\ vertices whose removal disconnects the
graph.   If this happens, then $X$ decomposes into a collection of so-called \emph{blocks}. These are either maximal $2$-connected induced
subgraphs or disconnecting edges (edges such that if they are removed then the graph becomes disconnected). Any two such blocks are either disjoint or they intersect in a cut-point.   Every edge of $X$ lies in exactly one block.
Associated with this decomposition is a tree $T=(VT,ET)$ in which $VT = \cb\cup\cs$, where  $\cs$ is the set of
cut-points, $\cb$ is the set of blocks and there is an edge in $ET$ joining $b \in\cb$ with $s\in\cs$ if and only
if $s\in b$.   If $G$ is a group of automorphisms of $X$ then there is an induced action of $G$ on $T$. Because of this property we call $T$ a \emph{structure tree}.
If $X$ is a finite graph then $T$ will be a finite tree and any action on a finite tree is trivial, i.e.\ there is a vertex or an edge which is fixed by $G$. This is illustrated in Figure~\ref{fig:1block}. 
The number next to a vertex of $T$  indicates the order of the subgroup of the automorphism group fixing that vertex. Note the $2$-colouring of $T$, in which white vertices are cut points and black vertices are blocks.
\end{exam}

\begin{figure}[htbp]
\centering
\begin{tikzpicture}[scale=0.5]
      %   \put(-80,-5)
          {
    \path (0,2) coordinate (p1);
    \path (2,3) coordinate (p2);
    \path (3,2) coordinate (p3);
    \path (1,1) coordinate (p4);
     \draw (p3) -- (1,1);
     \draw (p1) -- (p2) -- (p3) -- cycle;
     \path (intersection of p1--p3 and p2--p4) coordinate (X);
 \filldraw[white] (X) circle (3pt);
\draw (p1) -- (p4) --(p2);
    \filldraw (p1) circle (3pt) ;
     \filldraw (p2) circle (3pt);
  \filldraw (p2) circle (3pt);  
  \filldraw (p3) circle (3pt);  
  \filldraw (p4) circle (3pt);
  \path (6,2) coordinate (q1);
    \path (8,3) coordinate (q2);
    \path (9,2) coordinate (q3);
    \path (7,1) coordinate (q4);
     \draw (q3) -- (q4);
      \draw (q1) -- (q2) -- (q3) -- cycle;
     \path (intersection of q1--q3 and q2--q4) coordinate (Y);
 \filldraw[white] (Y) circle (3pt);
\draw (q1) -- (q4) --(q2);
    \filldraw (q1) circle (3pt) ;
    \filldraw (q2) circle (3pt);
  \filldraw (q2) circle (3pt);  
  \filldraw (q3) circle (3pt);  
  \filldraw (q4) circle (3pt);
  \path (6,2) coordinate (p1);
    \path (2,3) coordinate (p2);
    \path (5,4) coordinate (p3);
    \draw (p1) -- (p2) -- (p3) -- cycle;
    \filldraw (p2) circle (3pt);
  \filldraw (p2) circle (3pt);  
  \filldraw (p3) circle (3pt);
  \path (2, 0) coordinate (p1);
    \path (2, 1) coordinate (p2);
    \path (3,2) coordinate (p3);
    \path (3,1) coordinate (p4);
     \draw (p3) -- (p4);
     \draw (p1) -- (p2) -- (p3) -- cycle;
     \path (intersection of p1--p3 and p2--p4) coordinate (X);
 \filldraw[white] (X) circle (3pt);
\draw (p1) -- (p4) --(p2);
    \filldraw (p1) circle (3pt) ;
    \filldraw (p2) circle (3pt);
  \filldraw (p2) circle (3pt);  
  \filldraw (p3) circle (3pt); 
  \filldraw (p4) circle (3pt);
  \path (4,5) coordinate (p1);
    \path (3, 4) coordinate (p2);
    \path (2, 5) coordinate (p3);
    \path (3, 7) coordinate (p4);
     \draw (p3) -- (p4);
     \draw (p1) -- (p2) -- (p3) -- cycle;
     \path (intersection of p1--p3 and p2--p4) coordinate (X);
 \filldraw[white] (X) circle (3pt);
\draw (p1) -- (p4) --(p2);
    \filldraw (p1) circle (3pt) ;
     \filldraw (p2) circle (3pt);
  \filldraw (p3) circle (3pt);  
  \filldraw (p4) circle (3pt);
  \path (4,5) coordinate (p1);
    \path (6,6) coordinate (p2);
    \path (7,5) coordinate (p3);
    \path (5,4) coordinate (p4);
     \draw (p3) -- (p4);
     \draw (p1) -- (p2) -- (p3) -- cycle;
     \path (intersection of p1--p3 and p2--p4) coordinate (X);
 \filldraw[white] (X) circle (3pt);
\draw (p1) -- (p4) --(p2);
    \filldraw (p1) circle (3pt) ;
    \filldraw (p2) circle (3pt);
  \filldraw (p2) circle (3pt);  
  \filldraw (p3) circle (3pt);  
  \filldraw (p4) circle (3pt);}
  
% \put(50,-5)
 {\path (6,-8) coordinate (p1);
    \path (2,-7) coordinate (p2);
    \path (5,-6) coordinate (p3);
   \draw (4, -7) coordinate (q1);
   \draw (3, -5) coordinate (q3);
    \draw (5.5, -5) coordinate (q4);
    \draw (1.5, -8) coordinate (q5);
    \draw (2.5, -9) coordinate (q6);
     \filldraw (p1) circle (3pt) ;
 \filldraw (p2) circle (3pt);
   \filldraw (p3) circle (3pt);  
 \filldraw (q1) circle (3pt);
\path (4, -5) coordinate (p4);
 \filldraw (p4) circle (3pt);  
 \path (3, -8) coordinate (p5);
  \filldraw (p5) circle (3pt);  
 \draw (p1) -- (q1);
  \draw (p2) -- (q1);
 \draw (p3) -- (q1);
 \path (7.5, -8) coordinate (q2);
 \filldraw (q2) circle (3pt);
\filldraw (q3) circle (3pt);
\filldraw (q4) circle (3pt);
\filldraw (q5) circle (3pt);
\filldraw (q6) circle (3pt);
\draw (q3)--(q4)--(p3);
 \draw (p2)--(q5)--(p5)--(q6);  
 \draw (p1) --(q2);
     \filldraw [white] (p1) circle (2pt);
\filldraw [white]  (p2) circle (2pt);
\filldraw [white] (p3) circle (2pt);
\filldraw [white] (p5) circle (2pt);
\filldraw [white] (p4) circle (2pt);
\draw (p1) node [below] {$_{1728}$ };
\draw (p2) node [left] {$_{864}$ };
\draw (q1) node [right] {$_{\ 1728}$ };
\draw (q2) node [below] {$_{1728}$ };
\draw (q3) node [left] {$_{864}$ };
\draw (p4) node [above] {$_{864}$ };
\draw (p3) node [right] {$_{864}$ };
\draw (q6) node [below] {$_{864}$ };
\draw (q4) node [above] {$_{864}$ };
\draw (q5) node [below] {$_{864}$ };
\draw (p5) node [right] {$_{864}$ };}
\end{tikzpicture}
\caption{One-connected graph and structure tree}\label{fig:1block}
\end{figure}
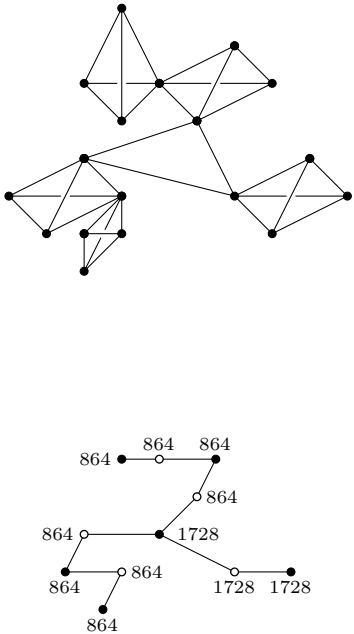

There are  similar decompositions if $X$ is $2$-connected but not $3$-connected.
One  was described by Tutte \cite{Tutte1984} if $X$ is finite.  This was generalised by Droms Servatius and Servatius \cite{Droms2001} to  locally finite graphs and by Richter \cite{Richter2004} to  arbitrary graphs.  
A somewhat different account is given in  \cite{Dunwoody2007}.
The decomposition we will generalise  gives a structure tree $T$, which again has a $2$-coloring $VT =\cb\cup\cs$.

Sets of at least three vertices which never lie in different components after removing any two vertices from the graph are called \emph{2-inseparable}. The set $\cb$ contains two types of blocks. Blocks of type 1 are maximal 2-inseparable sets. Blocks of type 2 correspond to sets of vertices that can be separated by removing two vertices, but cannot be separated by removing two vertices which are contained in some 2-inseparable set. Blocks of type 2 correspond to circular arrangements of blocks of type 1.
The set $\cs$ corresponds to those pairs of vertices whose 
removal disconnects the graph and which are contained in some 2-inseparable set.

\begin{exam}\label{exam:2-connect}
An example is given in Figure~\ref{fig:2block}. There are 4 blocks of type 1 which are arranged in a 4-cycle and one block of type 2, namely  $\{x_1,x_2,x_3,x_4\}$. The 2-separators are
\[\cs=\{\{ x_1, x_2\},\{ x_2, x_3\},\{ x_3, x_4\},\{ x_4, x_1\}\}.\]
Here $\{ x_1, x_3\}$ and $\{ x_2, x_4\}$ are the $2$-separators which are not in $\cs$, because they are not contained in any 2-inseparable set.
The 4-cycle of blocks corresponds
to the $4$-cycle $(x_1,x_2,x_3,x_4)$ of vertices, one edge of which, shown dashed in Figure~\ref{fig:2block}a, is what Tutte calls an \emph{ideal edge}. This edge is not in the original graph, and joins the vertices of a $2$-separator in $\cs$.    Tutte's decomposition for this graph gives a structure tree that is the same as our tree except that the vertices of degree two are removed. Note that we could add  ideal edges so that all blocks become complete graphs as in Figure~\ref{fig:2block}b. The corresponding structure tree is shown Figure~\ref{fig:2block}c.
Tutte's decompositions for 
Figure~\ref{fig:2block}a and Figure~\ref{fig:2block}d are the same, but our decomposition for Figure~\ref{fig:2block}d is trivial as there is only one maximal $2$-inseparable set.

\vskip1mm
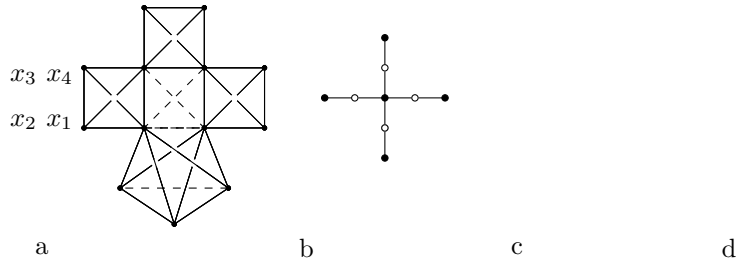
\begin{figure}[htbp]
\centering
\begin{tikzpicture}[scale=0.4]
\put(-50,0){
\path (0,2) coordinate (p1);
    \path (2,2) coordinate (p2);
    \path (0,0) coordinate (p3);
    \path (2,0) coordinate (p4);
       \draw (p1) -- (p2) -- (p3) -- cycle;
     \path (intersection of p2--p3 and p1--p4) coordinate (X);
 \filldraw[white] (X) circle (3pt);
\draw (p2) -- (p4) --(p3);
    \filldraw (p1) circle (2pt) ;
     \filldraw (p2) circle (2pt) ; 
 \draw (2.4,1.7) node  {$x_3$};
     \filldraw (p3) circle (2pt) ;
       \draw (p1) -- (p4);
       \path (2,0) coordinate (p1);
    \path (4,0) coordinate (p2);
    \path (3,-3.2) coordinate (p3);
    \path (4.8, -2) coordinate (p4);
    \path (1.2, -2) coordinate (p5);
    \draw (p3)--(p5)--(p2);
    \draw [dashed] (p1)-- (p2);
\draw (p1) -- (p5) ;
    \draw (p4) -- (p2) -- (p3);
     \path (intersection of p1--p3 and p2--p5) coordinate (Y);
      \path (intersection of p1--p4 and p2--p5) coordinate (Z);
 \path (intersection of p2--p3 and p1--p4) coordinate (X);

 \filldraw[white] (X) circle (3pt);
 \filldraw[white] (Y) circle (3pt);
 \filldraw[white] (Z) circle (3pt);
\draw  (p1) --(p3)--(p4)--cycle ;
    \filldraw (p1) circle (2pt) ;
     \filldraw (p2) circle (2pt) ; 
 \draw (2.4, .2) node {$x_2$};
  \draw (3.6,.2) node {$x_1$};
     \filldraw (p3) circle (2pt) ;
      \filldraw (p4) circle (2pt) ;
 \filldraw (p5) circle (2pt) ;
      \path (6,2) coordinate (p1);
    \path (4,2) coordinate (p2);
    \path (4,0) coordinate (p3);
    \path (6,0) coordinate (p4);
     \draw (p3) -- (p2);
  \filldraw (p4) circle (2pt) ;

   \draw (p1) -- (p2) -- (p4) -- cycle;
     \path (5, 1) coordinate (X);
 \filldraw[white] (X) circle (3pt);
\draw (p4) -- (p3) --(p1);
    \filldraw (p1) circle (2pt) ;
    
      \path (4,2) coordinate (p1);
    \path (2,2) coordinate (p2);
    \path (2, 4) coordinate (p3);
    \path (4,4) coordinate (p4);
     \draw (p4) -- (p1);
     \draw (p1) -- (p2) -- (p3) -- cycle;
     \path (3,3) coordinate (X);
 \filldraw[white] (X) circle (3pt);
\draw (p2) -- (p4) --(p3);
    \filldraw (p1) circle (2pt) ;
 \draw (3.6, 1.7) node {$x_4$};
    \filldraw (p4) circle (2pt) ;
  \filldraw (p3) circle (2pt) ;
\draw (3,-4) node {a};}

\put(50,0){
    \path (0,2) coordinate (p1);
    \path (2,2) coordinate (p2);
    \path (0,0) coordinate (p3);
    \path (2,0) coordinate (p4);
       \draw (p1) -- (p2) -- (p3) -- cycle;
     \path (intersection of p2--p3 and p1--p4) coordinate (X);
 \filldraw[white] (X) circle (3pt);
\draw (p2) -- (p4) --(p3);
    \filldraw (p1) circle (2pt) ;
     \filldraw (p2) circle (2pt) ; 
     \filldraw (p3) circle (2pt) ;
       \draw (p1) -- (p4);

    \path (2,0) coordinate (p1);
    \path (4,0) coordinate (p2);
    \path (3,-3.2) coordinate (p3);
    \path (4.8, -2) coordinate (p4);
    \path (1.2, -2) coordinate (p5);
    \draw (p3)--(p5)--(p2);
    \draw [dashed] (p4)-- (p5);
\draw (p1) -- (p5) ;
    \draw (p4) -- (p2) -- (p3);
     \path (intersection of p1--p3 and p2--p5) coordinate (Y);
      \path (intersection of p1--p4 and p2--p5) coordinate (Z);
 \path (intersection of p2--p3 and p1--p4) coordinate (X);

 \filldraw[white] (X) circle (3pt);
 \filldraw[white] (Y) circle (3pt);
 \filldraw[white] (Z) circle (3pt);
\draw  (p1) --(p3)--(p4)--cycle ;
    \filldraw (p1) circle (2pt) ;
     \filldraw (p2) circle (2pt) ; 
     \filldraw (p3) circle (2pt) ;
      \filldraw (p4) circle (2pt) ;
 \filldraw (p5) circle (2pt) ;
      \path (6,2) coordinate (r1);
    \path (4,2) coordinate (r2);
    \path (4,0) coordinate (r3);
    \path (6,0) coordinate (r4);
     \draw (r3) -- (r2);
  \filldraw (r4) circle (2pt) ;

    \draw (r1) -- (r2) -- (r4) -- cycle;
     \path (5, 1) coordinate (X);
 \filldraw[white] (X) circle (3pt);
\draw (r4) -- (r3) --(r1);
    \filldraw (r1) circle (2pt) ;
    
      \path (4,2) coordinate (s1);
    \path (2,2) coordinate (s2);
    \path (2, 4) coordinate (s3);
    \path (4,4) coordinate (s4);
     \draw (s4) -- (s1);
     \draw (s1) -- (s2) -- (s3) -- cycle;
     \path (3,3) coordinate (X);
 \filldraw[white] (X) circle (3pt);
\draw (s2) -- (s4) --(s3);
    \filldraw (s1) circle (2pt) ;
    \filldraw (s4) circle (2pt) ;
  \filldraw (s3) circle (2pt) ;
\draw [dashed] (s2)--(p2)-- (p1)--(s1);
\draw (3,-4) node {b};
\path (8,1) coordinate (p1);
    \path (12,1) coordinate (p2);
    \path (10, -1) coordinate (p3);
   \draw (10, 3) coordinate (q1);
   \draw (10,1) coordinate (q3);
    \draw (9,1) coordinate (q4);
    \draw (11,1) coordinate (q5);
    \draw (10,0) coordinate (q6);
    \draw (10, 2) coordinate (q7);
  
     \filldraw (p1) circle (3pt) ;
 \filldraw (p2) circle (3pt);
   \filldraw (p3) circle (3pt);  
 \filldraw (q1) circle (3pt);
\filldraw (q6) circle (3pt) ;

 \draw (p1) -- (p2);
  \draw (p3) -- (q1);
 
\filldraw (q3) circle (3pt);
\filldraw [white] (q4) circle (3pt);
\filldraw [white] (q5) circle (3pt);
\filldraw [white] (q6) circle (3pt);
\filldraw [white] (q7) circle (3pt);
\draw (q4) circle (3pt);
\draw (q5) circle (3pt);
\draw (q6) circle (3pt);
\draw (q7) circle (3pt);  

\draw (10,-4) node {c};}
\put(210,0){
\path (0,2) coordinate (p1);
    \path (2,2) coordinate (p2);
    \path (0,0) coordinate (p3);
    \path (2,0) coordinate (p4);
     \draw (p1) -- (p3) -- (p4) --(p2) --cycle;
     \path (intersection of p2--p3 and p1--p4) coordinate (X);
 \filldraw[white] (X) circle (3pt);
%\draw (p2) -- (p4) --(p3);
    \filldraw (p1) circle (2pt) ;
     \filldraw (p2) circle (2pt) ; 
 %\draw (2.4,1.7) node  {$x_3$};
     \filldraw (p3) circle (2pt) ;
       %\draw (p1) -- (p4);
       \path (2,0) coordinate (p1);
    \path (4,0) coordinate (p2);
    \path (3,-3.2) coordinate (p3);
    \path (4.8, -2) coordinate (p4);
    \path (1.2, -2) coordinate (p5);
   \draw (p3)--(p5)--(p2);
    \draw [dashed] (p1)-- (p2);
\draw (p1) -- (p5) ;
    \draw (p4) -- (p2) -- (p3);
     \path (intersection of p1--p3 and p2--p5) coordinate (Y);
      \path (intersection of p1--p4 and p2--p5) coordinate (Z);
 \path (intersection of p2--p3 and p1--p4) coordinate (X);

 \filldraw[white] (X) circle (3pt);
 \filldraw[white] (Y) circle (3pt);
 \filldraw[white] (Z) circle (3pt);
\draw  (p1) --(p3)--(p4)--cycle ;
    \filldraw (p1) circle (2pt) ;
     \filldraw (p2) circle (2pt) ; 
% \draw (2.4, .2) node {$x_2$};
%  \draw (3.6,.2) node {$x_1$};
     \filldraw (p3) circle (2pt) ;
      \filldraw (p4) circle (2pt) ;
 \filldraw (p5) circle (2pt) ;
      \path (6,2) coordinate (p1);
    \path (4,2) coordinate (p2);
    \path (4,0) coordinate (p3);
    \path (6,0) coordinate (p4);
     \draw (p3) -- (p2);
  \filldraw (p4) circle (2pt) ;

   \draw  (p4) --(p1)-- (p2) ;
     \path (5, 1) coordinate (X);
 \filldraw[white] (X) circle (3pt);
\draw (p4) -- (p3) ;
    \filldraw (p1) circle (2pt) ;
    
      \path (4,2) coordinate (p1);
    \path (2,2) coordinate (p2);
    \path (2, 4) coordinate (p3);
    \path (4,4) coordinate (p4);
     \draw (p4) -- (p1)--(p2) ;
     \draw  (p2) -- (p3) ;
     \path (3,3) coordinate (X);
 \filldraw[white] (X) circle (3pt);
\draw (p4) --(p3);
    \filldraw (p1) circle (2pt) ;
% \draw (3.6, 1.7) node {$x_4$};
    \filldraw (p4) circle (2pt) ;
  \filldraw (p3) circle (2pt) ;
\draw (3,-4) node {d};}
\draw (20,0) node {$\ $};
\end{tikzpicture}
\vskip-0.1cm \caption{Decomposition of a 2-connected graph}\label{fig:2block}\vskip-6mm
\end{figure}
\end{exam}

In this paper we develop an axiomatic theory which shows that if  a graph $X$ has a finite set of vertices whose removal produces at least
two components that are large in some sense and $G$ is the automorphism group of $X$, then there is a
$G$-tree  (or structure tree) $T$ with a bipartition $(\cs,\cb)$ of the set of vertices $VT=\cs\cup\cb$ so that the vertices in $\cs$ correspond to
finite separating sets.

Such a structure tree had been known to exist in the case when $X$ is an infinite graph
and $X$ can be disconnected into two infinite components by removing finitely many edges (see \cite{Dicks1989,Dunwoody1982}). We generalize this to the case where infinite components are separated by removing finite sets of vertices. See Examples~\ref{exam:vertexends}, \ref{exam:edgeends} and \ref{exam:arbitraryends}.

We have seen structure trees for finite graphs in Example~\ref{exam:1-connect} for 1-connected graphs which are not 2-connected and in Example~\ref{exam:2-connect} for 2-connected graphs which are not 3-connected.
Within the axiomatic theory we will also generalize Tutte's decomposition to $k$-connected graphs for any $k$, see Example~\ref{exam:finitecuts}.\medskip

 A \emph{ray} is a sequence of distinct vertices $v_0, v_1, \dots $ such that $v_i$ and $v_{i+1}$ are adjacent for each $i$.
Let $C \subset VX$.  The \emph{coboundary } $\delta C$ is the set of edges that have one vertex in
$C$ and one in $VX\setminus C$.  If $\delta C$ is finite, then $C$ is called an \emph{edge  cut}.
If $R$ is a ray, then all the terms from a certain point onwards will either lie in $C$ or in $VX\setminus C$.
We say that $C$ \emph{separates} rays $R_1$, $R_2$ if one of the rays eventually belongs to $C$ and the
other eventually belongs to $VX\setminus C$.
We say two rays  belong to the same edge end if  they are not separated by any edge cut.
It is easy to see that this is an equivalence relation on the set of rays, and so we can
take an equivalence class to be an \emph{edge end}.

In \cite{Dunwoody1982}  it is shown that if a graph has more than one edge end and $k$ is the smallest integer for which
there is an edge cut $C$ with $|\delta C| = k$ that separates two ends, then there is a structure tree
in which the edges correspond to edge cuts with this property.

In \cite{Dicks1989} it is shown that there is a sequence of structure trees $T_n$ invariant under the action of $G$, the automorphism group of 
$X$, with the property that if two edge ends of $X$ are separated by removing $n$ edges then they are separated by a cut $C$ in $ET_n$
such that $|\delta C| \leq n$.    Recently the first author \cite {D13} has shown that there is a such a sequence of structure trees  for any graph - finite or infinite -  that is uniquely determined and which separates distinct vertices or  distinct edge ends or a vertex and an edge end.

A set of vertices $C\subset VX$ is said to be \emph{connected} if any two vertices in $C$ can be joined by a path all of whose vertices are in $C$. \emph{Components} of a set of vertices are its maximal connected subsets.

In this paper we are concerned with  vertex cuts and vertex ends.
We say that $C\subset VX$ is a vertex cut if  $C$ is connected and  $VX$ can be partitioned 
$C\cup NC \cup C^*$, where $NC$ is finite and consists of the vertices which are not in $C$, but which 
are adjacent to vertices in $C$.  Note that generally $C^*$ will not be connected.
As for edge cuts any ray is eventually in $C$ or in $C^*$.   We say two rays  belong to the
same \emph{vertex end} if they are not separated by any vertex cut. A finite set $F$ of vertices is called a \emph{separator} if $VX \setminus F$ has at least two components which contain an end.
If $C$ is an edge cut then it is also a vertex cut in which $NC$ is the set of vertices of $\delta C$ which
are not in $C$.  Thus if two rays belong to the same vertex end, then they belong to the same edge
end.   The converse is true if $X$ is locally finite.   However it is easy to construct examples of graphs which are not locally finite
in which there are more vertex ends than edge ends.  For example if $K_\infty$ is the complete graph
on a countably infinite set of vertices and $X$ is the graph consisting of $n$ copies of $K_\infty$, 
in which  a single vertex from each copy is identified, then $X$ has $n$ vertex ends but only one edge end.

\begin{exam}\label{exam:farey}
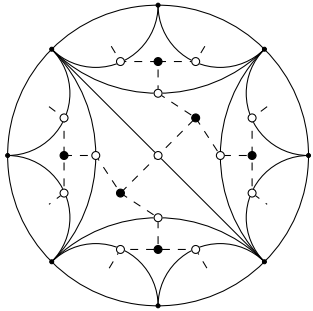
\begin{figure}[htbp]
\centering
\begin{tikzpicture}[scale=0.5]
\draw (2,0) circle (4cm);
    \draw (-.828, -2.828)  arc (-45: 45: 4);
  \draw (-2,0) arc (90: -45: 1.656);
\draw (-2,0) arc (-90: 45: 1.656);
\draw (6,0) arc (-90: 45: -1.656);
\draw (6,0) arc (90: -45: -1.656);
    \draw (4.828, -2.828)  arc (45: 135: 4);
\draw (4.828, -2.828)  arc (45: -45: -4);
\draw (-.828, 2.828)  arc (-135: -45: 4);
\draw (2,4)  arc (0: 135: -1.656);
\draw (2,4)  arc (180: 45: -1.656);
\draw (2,-4)  arc (0: -135: -1.656);
\draw (2,-4)  arc (-180: -45: -1.656);
\draw (4.828, -2.828) -- (-.828, 2.828);
  \path (2,0) coordinate (p1);
    \path (3,1) coordinate (p2);
    \path (1, -1) coordinate (p3);
       \filldraw (p2) circle (3pt);
   \filldraw (p3) circle (3pt);
     \path (2,2.5) coordinate (q1);
    \path (2,-2.5) coordinate (q2);
    \path (-.5,0) coordinate (q3);
      \path (4.5,0) coordinate (q4);
   \filldraw (q1) circle (3pt);
   \filldraw (q2) circle (3pt);
   \filldraw (q3) circle (3pt);
   \filldraw (q4) circle (3pt);
      \path (.344,0) coordinate (r1);
    \path (3.656, 0) coordinate (r2);
    \path (2,  1.656) coordinate (r3);
      \path (2, -1.656) coordinate (r4);
    \path (-.5,1) coordinate (u1);
    \path (-.5, -1) coordinate (u2);
    \path (4.5, 1) coordinate (u3);
      \path (4.5, -1) coordinate (u4);
 \path (1,2.5) coordinate (v1);
    \path (3,2.5) coordinate (v2);
    \path (1,-2.5) coordinate (v3);
      \path (3,-2.5) coordinate (v4);
      \draw [dashed]  (r3)--(p2) -- (p3) --(r1);
\draw [dashed]  (p3)--(r4) -- (q2);
\draw [dashed]  (p2)--(r2) -- (q4);
\draw [dashed]  (r3)--(q1) -- (v2)-- (3.3, 3);
\draw [dashed]  (q1) -- (v1) -- (.7, 3);
\draw [dashed]  (-.9, 1.3)--(u1)--(u2) -- (-.9, -1.3);
\draw [dashed]  (4.9, 1.3) -- (u3)--(u4)-- (4.9, -1.3);
\draw [dashed]  (q3)--(r1);
\draw [dashed]  (.7, -3)--(v3)--(v4)-- (3.3, -3);
   \filldraw [white] (v1) circle (3pt);
   \filldraw [white] (v2) circle (3pt);
   \filldraw [white] (v3) circle (3pt);
   \filldraw [white] (v4) circle (3pt);
   \filldraw [white] (u1) circle (3pt);
   \filldraw [white] (u2) circle (3pt);
   \filldraw [white] (u3) circle (3pt);
   \filldraw [white] (u4) circle (3pt);
   \filldraw [white] (r1) circle (3pt);
   \filldraw [white] (r2) circle (3pt);
   \filldraw [white] (r3) circle (3pt);
   \filldraw [white] (r4) circle (3pt);
    \filldraw [white] (p1) circle (3pt);
    
   \draw (p1) circle (3pt);
   \draw (r1) circle (3pt);
   \draw (r2) circle (3pt);
   \draw (r3) circle (3pt);
   \draw (r4) circle (3pt);
   \draw (v1) circle (3pt);
   \draw (v2) circle (3pt);
   \draw (v3) circle (3pt);
   \draw (v4) circle (3pt);
   \draw (u1) circle (3pt);
   \draw (u2) circle (3pt);
   \draw (u3) circle (3pt);
   \draw (u4) circle (3pt);
    
  \path (-2,0) coordinate (p1);
    \path (6,0) coordinate (p2);
    \path (2, 4) coordinate (p3);
     \path (2, -4) coordinate (p4);
        \filldraw (p1) circle (1.5pt);
   \filldraw (p2) circle (1.5pt);
   \filldraw (p3) circle (1.5pt);
   \filldraw (p4) circle (1.5pt);
   \path (4.828,-2.828) coordinate (p1);
    \path (4.828,2.828) coordinate (p2);
    \path (-.828, -2.828) coordinate (p3);
     \path (-.828, 2.828) coordinate (p4);
     \filldraw (p1) circle (1.5pt);
   \filldraw (p2) circle (1.5pt);
   \filldraw (p3) circle (1.5pt);
   \filldraw (p4) circle (1.5pt);

\end{tikzpicture}
\vskip-1mm\caption{Farey graph with structure tree.}\label{fig:Farey}\vskip-2mm
\end{figure}
For the Farey graph vertex cuts yield a tree decomposition but edge cuts do not, see Figure~\ref{fig:Farey}. This example was pointed out to us by Hamish Short.
This graph is obtained by taking an ideal triangle in the hyperbolic plane and then taking all translates of this triangle under the group
of isometries generated by reflexions in the three sides.  One obtains a graph, in which the vertices are the translates of the vertices
of the triangle.  All of these will lie in the boundary of the plane, which will be a circle in the disc model.  The edges of the graph will be the translates of the edges of the triangle.   The vertices of any edge will form a $2$-separator.    In this graph every vertex has infinite valency.
The structure tree is easy to see.   There will be one orbit of vertices corresponding to the separating edges. The other orbit
corresponds to the triangles. For each such triangle there will be three  edges of the structure tree joining the vertex corresponding to the triangle
to the three vertices corresponding to its boundary edges.    The structure tree is essentially the dual graph to the tessellation of the hyperbolic plane.
This dual graph is a tree since each edge of the original graph is separating.
\end{exam}

In developing our theory we give a set of axioms that it is sufficient for a  set of vertex cuts to satisfy in order that a structure tree can be constructed.
In Figure~\ref{fig:2block} removing any two of the central four vertices will leave two components.   The $12$ components thus obtained satisfy
the axioms of a \emph{cut system}.  The $12$ cuts are not \emph {nested}  with each other.  Thus if one takes two cuts $C, D$ such that $NC, ND$ are the two distinct diagonal pairs, then $C\cap D, C \cap D^*, C^*\cap D, C^*\cap D^*$ are  all non-empty.  But if we restrict the cut system to the components obtained by removing a separator in 
$\cs=\{\{ x_1, x_2\},\{ x_2, x_3\},\{ x_3, x_4\},\{ x_4, x_1\}\}$ (later these will be referred to as ``optimally nested cuts'') then we obtain a cut system in which all pairs of cuts are nested. The cuts in such a system can be regarded as the directed edges of the structure tree.

If $X$ is an infinite graph with more than one vertex end and $k$ is the smallest integer
for which there is a vertex cut $C$ such that $|NC| = k$ and $NC$ separates two ends
then there is a set of such vertex cuts which satisfies the axioms.

We  obtain a structure tree theory
that applies to  finite graphs, and gives information about the $k$-connectivity of the graph for any $k$.
 For a complete graph the structure tree is trivial, i.e.\ it only has one vertex.
 %Any  connected graph $X$  has a  non-trivial structure tree, withif and only if for some integer $k$, there are $k$-separators.
 A $k$-separator is  a set $S$ of $k$ vertices
 whose removal  leaves at least two components $C,D$ such that $C\cup S$ and $D\cup S$ each contain
  $k$-inseparable subsets.   Here a set $Y$ of vertices is $k$-inseparable if $Y$ has at least $k+1$ vertices and no two of the vertices 
 will lie in distinct components of the graph when at most $k$ vertices of $X$ are removed.  
 Let $\kappa $ be the smallest value of $k$ for which the above occurs.  We show that any connected  graph  contains a uniquely  determined nested set  $\cn$  of $\kappa $-cuts  such that if two $\kappa $-inseparable subsets are separated
 by some $\kappa $-separator then they are separated by a set in $\cn $. The uniqueness of the  set $\cn$ means that it is  invariant under the automorphism group of $X$ and
 forms the directed edge set of a structure tree $T_{\kappa }$  for $X$.    We also show that $2$-colouring $T_{\kappa }$ partitions $VT_{\kappa }$ into
 {\it blocks} and  separators.   %Each separator contains $\kappa $ vertices and removing these vertices separates the graph.
 Every $k$-inseparable set is a subset of a block,  though some blocks may not contain any $k$-inseparable set.
 
 Unlike the situation for edge cuts, it is not usually possible to construct  a uniquely determined sequence
 of trees corresponding to vertex cuts in $X$ which separate any pair of vertex ends or pairs of distinct maximal $k$-inseparable sets.
 However we show that for any pair of distinct vertex ends 
 there is a uniquely determined sequence  of structure trees starting with $T_{\kappa }$ and such that each succeeding tree is a structure tree in a graph whose vertex set is  a single block of the preceding tree until one obtains a structure tree in which the pair of ends  do not lie in the same block,
 and so are separated by a separator of that tree.     In a similar way one can construct a sequence of structure trees to separate a pair of maximal
 $k$-inseparable sets.   However in this case it may happen - in a rather exceptional case that we specify - that the the pair of sets end up in a block in which they cannot be separated.
 \subsection {Index of Definitions}
 \ \ 
 \medskip
  \halign { #    &  #  &  #   & # \cr

\ref {cuts} &  cuts.& \ref {nested} & nested\hskip 2pt cuts.\cr
 \ref{pre-cut} & pre-cut.&  \ref{slice} & slice. \cr
 \ref{separator} & separator.&  \ref{optimal} & optimally\hskip 2pt nested.\cr
 \ref{sep AB}  &$C$ separates $A$ and $B$. \ \ \ \  & \ref{block} & block. \cr
 \ref{insep} & $k$-inseparable.& \ref{omega} & $\Omega $-cut\hskip 2pt system. \cr
 \ref{thin} &  thin\hskip 2pt cut.  & \ref{opti-wrt} & optimally nested with respect to $\omega _1$ and $\omega _2 $.\cr
 \ref {isolated}&  isolated\hskip 2pt corner. &\  & \cr
 }
 
 \section{Systems of cuts and separators}

The \emph{boundary} $NC$ of a set of vertices $C$ is the set of vertices in $VX\setminus C$ which are adjacent to $C$. 
Set $C^*=VX\setminus (C\cup NC)$. We call $C^*$ the \emph{$*$-complement} of $C$. 

Let $C$ and $D$ be sets of vertices. The intersections $C\cap D$, $C^*\cap D$, $C\cap D^*$ and $C^*\cap D^*$ are called the \emph{corners} of $C$ and $D$, see Figure~\ref{fig:cross}. The sets $C\cap ND$, $C^*\cap ND$, $D\cap NC$ and $D^*\cap NC$ are called the \emph{links} and $NC\cap ND$ is the \emph{centre}. A link and a corner are said to be \emph{adjacent} if they are adjacent in Figure~\ref  {fig:cross}. We say that two links are the links \emph{of} their adjacent corner (or we say they are \emph{its} links), and we say that two corners are the corners \emph{of} their adjacent link (or \emph{its} corners). Two links or two corners are said to be \emph{adjacent} if they are adjacent to the same link or corner, respectively. Otherwise they are called \emph{opposite}.

\begin {defi} \label {cuts} Let $\cc$ be a collection  of non-empty connected sets of vertices with finite
boundaries in a connected graph. We call $\cc$ a \emph{cut system} if $(C^*)^*=C$ for all $C\in\cc$ and any
pair $C$, $D$ of elements of $\cc$ satisfy the following.

\begin{itemize}
\item[(A1)] If two opposite corners of $C$ and $D$ each contain an element of $\cc$ then any of their
components which contains an element of $\cc$ is in $\cc$.
\item[(A2)] There are two opposite corners of $C$ and $D$ each of which contains an element of $\cc$.
\end{itemize}
\end {defi}
Elements of a cut-system are called \emph{cuts}. \emph{Cut components} are components which are cuts.

\begin{lemm}
If $C$ is a cut then $C^*$ has a cut component and every component of $C^*$ which contains a cut is a cut.
\end{lemm}

\begin{proof}
For $C=D$ Axiom (A2) implies that $C^*$ contains a cut and (A1) implies that every component of $C^*$ which contains a cut is itself a cut.
\end{proof}

Let $C'$ denote the set of vertices in $NC$ which are not adjacent to any element of $C^*$.

\begin{lemm}\label{lemm:invol}
Let $C,D$ be sets of vertices. Then $C\subset (C^*)^*$ and $C^*=((C^*)^*)^*$.

Suppose $C=(C^*)^*$ and $D= (D^*)^*$ and let $A$ be a corner of $C,D$. Then $A=(A^*)^*$. The latter also holds for any component of $A$.
\end{lemm}

\begin{proof}
We have $(C^*)^*=C\cup C'$ and thus $((C^*)^*)^*=(C\cup C')^*=C^*$.

Let us assume that $C=(C^*)^*$ and $D= (D^*)^*$ and let $A$ be a corner of $C,D$. If say $A=C\cap D$ then any vertex $x$ in $NA$ has a  neighbour in $C^*\cup D^*$, because if $x\in NC$ then $C'=\emptyset$ implies that $x$ has a  neighbour in $C^*$ and if $x\in ND$ then $D'=\emptyset$ implies that $x$ has a  neighbour in $D^*$. Thus $A'=\emptyset$ and the same holds for any component of $A$. The other three cases for $A$ being a corner of $C,D$ are treated similarly.
\end{proof}

\begin {defi} \label {pre-cut} If $\cc$ is a cut-system then elements of $\cc\cup\cc^*$ are called \emph{pre-cuts}.
\end {defi} 
 Note that the $*$-complement is a bijective involution on $\cc\cup\cc^*$.

\begin{lemm}\label{lemm:compl_incl}
Let $C$ and $D$ be sets of vertices. Then $C\subset D$ implies $D^*\subset C^*$. If $(D^*)^*=D$ then $D^*\subset C^*$ implies $C\subset D$.
If $C,D$ are pre-cuts then $C\subset D$ if and only if $D^*\subset C^*$.
\end{lemm}

\begin{proof}
The first implication is clear from the definition of the $*$-complement. If $(D^*)^*=D$ and $D^*\subset C^*$ then the first implication and Lemma~\ref{lemm:invol} imply 
$C\subset (C^*)^*\subset (D^*)^*=D$. The statement for pre-cuts now follows from the fact that $(C^*)^*=C$ for any pre-cut $C$.
\end{proof}

The condition $(D^*)^*=D$ in Lemma~\ref{lemm:compl_incl} is necessary as the following example shows.

\begin{exam}
Consider the graph $X=(\{x,y\},\{\{x,y\}\})$ with just one edge. Set $D=\{x\}$ and $C=\{x,y\}$. Then $C^*=D^*=\emptyset$ and $(D^*)^*\ne D$. We have $D^*\subset C^*$ but $C\not\subset D$.
\end{exam}

\begin {defi}\label {separator} If $\cc$ is a cut system then the boundary of a cut is called a \emph{separator} \end {defi}

 We denote the set of separators by $\cs$.
Note that in general the set of separators does not determine a cut system.

Axiom (A2) can equivalently be replaced by the following.

\begin{itemize}
\item[(A2$'$)] If $C$ and $D$ are in $\mathcal{C}$ then both $C\setminus ND$ and $C^*\setminus ND$ contain an element of $\cc$.
\end{itemize}

To see the implication (A2$'$) $\Rightarrow$ (A2) note that one has to apply (A2$'$) also with $C$ and $D$ swapped.
Let us consider some examples of cut systems before developing the theory.

\begin{exam}\label{exam:vertexends}
For infinite graphs with more than one (vertex) end, take $\cc$ to be the set connected sets of vertices $C$ with $(C^*)^*=C$, $|NC|<\infty$, and such that $C$ separates two rays (i.e.\ two  ends). 
\end{exam}

\begin{proof}
If $C$ and $D$ are in $\cc$, then any component of a corner of $C$ and $D$ which contains a ray is also in $\cc$. Here we use the second part of Lemma~\ref{lemm:invol}.
This implies Axiom~(A1).
Axiom~(A2) holds because if $C$, $D$, $C^*$, $D^*$  all contain rays, then so do two opposite corners.
\end{proof}

\begin{exam}\label{exam:edgeends}
For infinite graphs with more than one edge end, the separators would naturally be finite sets of edges which separate rays. But separators are by definition sets of vertices. Hence we replace every edge of the original graph by paths of length two. Let $M$ be the set of new vertices, that is, the set of middle vertices of these paths of length two. Then we take cuts to be connected sets $C$ with $(C^*)^*=C$, such that $C$ separate two rays (i.e.\ ends) and their boundary $NC$ is a finite subset of $M$.
\end{exam}

\begin {defi} \label {sep AB}  We say that a cut $C$ \emph{separates} a set $A$ from a set $B$ if $A\subset C\cup NC$ and $B\subset C^*\cup NC$ or $B\subset C\cup NC$ and $A\subset C^*\cup NC$. Here we also require that neither $A$ nor $B$ is a subset of $NC$.   A  separator $S$ is said to \emph{separate} $A$ from $B$ if for some cut $C$ with $NC=S$, $C$ separates $A$ and $B$.
\end {defi} 
\begin{exam}\label{exam:arbitraryends}
Examples~\ref{exam:vertexends} and \ref{exam:edgeends} can be generalized by choosing $M$ as an arbitrary set of vertices.
\end{exam}

Next we consider a cut system which also makes sense in finite graphs.
\begin {defi} \label {insep} Let $k$ be a positive integer.
A subset $Y$ of $VX$ is said to be \emph{$k$-inseparable} if  it has at least $k+1$ elements and if for every set $C\subset VX$ with
$|NC| \le k$, either  $Y \subset C\cup NC$ or $Y \subset C^*\cup NC$.
\end {defi}
Examples of $k$-inseparable subgraphs are the vertex set of a  $(k+1)$-connected subgraph, or
the vertex set of a subgraph which is complete on $k+1$ vertices.
The vertices of a separating edge form a maximal $1$-inseparable set.

\begin{prop}
Let $X$ be a graph and let $Y \subset VX$ be a $k$-inseparable set.   There is a unique maximal $k$-inseparable set  containing $Y$.
\end {prop}
\begin {proof}
We show that if $A, B$ are $k$-inseparable sets containing $Y$ then $A\cup B$ is $k$-inseparable.  For suppose that $C \subset VX$ and
$|NC| \leq k$, then we know that either $Y \subset C\cup NC$ or $Y\subset C^*\cup NC$.   By relabelling $C$ as $C^*$ if necessary, we can assume
that $Y \subset C\cup NC$.   But now also $A \subset C\cup NC$ and $B\subset C\cup NC$, since $A, B$ are $k$-inseparable.
Thus $A\cup B$ is $k$-inseparable.
An ascending union of $k$-inseparable sets is $k$-inseparable and the proposition follows immediately.
\end {proof}

\begin{exam}\label{exam:finitecuts}
Let $\kappa$ be the smallest positive integer for which there are sets $C$, $Y_1$ and $Y_2$ such that $|NC|=\kappa$, $Y_1$ and $Y_2$ are $\kappa$-inseparable, $Y_1 \subset  C\cup NC$
and $Y_2 \subset C^* \cup NC$. We will show shortly that such connected sets $C$ with this property form a cut system. Examples for $\kappa=1,2$ are the graphs in Examples~\ref{exam:1-connect} and \ref{exam:2-connect}.
\end{exam}

Note that there are graphs where this cut system is empty. It is easy to see that this is the case for finite complete graphs or cycles. In fact, this holds for all finite transitive graphs, see Corollary~\ref{coro:finitetrans}.

Set $a=|C\cap ND|$, $b=|D^*\cap NC|$, $c=|C^*\cap ND|$, $d=|D\cap NC|$ and $m=|NC\cap ND|$, see Figure~\ref{fig:cross}.

\begin{proof}[Proof for Example~\ref{exam:finitecuts}]
Let $C,D$ be  cuts. We want to show that there exist opposite corners which have components that are cuts.
We know that there are $\kappa$-inseparable sets $Y_1$, $Y_2$ separated by $NC$,
and $\kappa $-inseparable sets $Y_3$, $Y_4$ separated by $ND$.
Each $Y_i$ determines a unique corner $A_i$ of $C$ and $D$ such that $Y_i$ is contained in the union of $A_i$, the two links which are adjacent to $A_i$ and the center.  Even though $A_i$ is uniquely determined, we cannot rule out the possibility
that $Y_i \cap A_i=\emptyset$ at this stage.
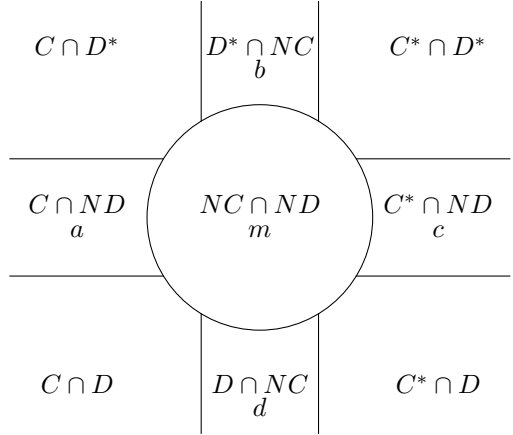
\begin{figure}[htbp]
\begin{tikzpicture}[scale=0.03]

\draw(85,46.5)--(85,100);
\draw(137,46.5)-- (137,100);
\draw(85,-38.5)--(85,-92);
\draw(137,-38.5)--(137,-92);
\draw(68.5,30)--(0,30);
\draw(68.5,-22)--(0,-22);
\draw(222,30)--(153.5,30);
\draw(222,-22)--(153.5,-22);
\draw (111,4) circle(50);
\draw (30, 80) node {$C\cap D^*$} ;
\draw (111, 80) node {$D^*\cap NC$} ;
\draw (190, 80) node {$C^*\cap D^*$} ;
\draw (30, 10) node {$C\cap ND$} ;
\draw (30, -70) node {$C\cap D$} ;
\draw (111, -70) node {$D\cap NC$} ;
\draw (111,10) node {$NC\cap ND$} ;
\draw (190, 10) node {$C^*\cap ND$} ;
\draw (190, -70) node {$C^*\cap D$} ;
\draw (30, -2) node {$a$} ;
\draw (111, 70) node {$b$} ;
\draw (190, -2) node {$c$} ;
\draw (111, -80) node {$d$} ;
\draw (111, -2) node {$m$} ;
\end{tikzpicture}
%\vskip-2cm
\caption{Corners, links and centre}\label{fig:cross}\vskip-3mm
\end{figure}

Two different $Y_i$ may determine the same corner.  There must be two $Y_i$'s which determine opposite corners.
For if this is not the case then all four $Y_i$'s will determine one of a pair of adjacent corners.   But then there will either  be no $Y_i$'s separated by
$NC$ or no $Y_i$'s separated by $ND$.
Suppose then, say, that
\[Y_1 \subset (C\cap D)\cup (C\cap ND)\cup (D\cap NC)\cup (NC\cap ND)\mbox{\quad and}\]
\[Y_2 \subset (C^*\cap D^*)\cup (C^*\cap ND)\cup (D^*\cap NC)\cup (NC\cap ND).\]

Consider Figure~\ref{fig:cross}.   We see that $|N(C\cap D)| \leq  a + m + d $ and $|N(C^*\cap D^*)| \leq b+m +c$.
But $b + m +d = |NC| =\kappa = |ND| = a +m +c$, and so $2\kappa = a + b + c +d +2m.$  Hence either  $|N(C\cap D)| < \kappa$ or
$|N(C^*\cap D^*)| < \kappa $ or $|N(C\cap D)| = |N(C^*\cap D^*)| = \kappa$. Whenever one of these boundaries has less or equal $\kappa$ elements then it separates $Y_1$ from $Y_2$. By the minimality of $\kappa $ we get $|N(C\cap D)| = |N(C^*\cap D^*)| = \kappa$. 
It follows that the opposite corners $C\cap D$ and $C^*\cap D^*$ have components that are cuts and so (A2) is satisfied.

Let $A$ and $B$ be components of opposite corners of $C$ and $D$ which contain elements of $\cc$. Then
\[|NA|+|NB|\le a + b + c +d +2m.\]
From before we know that $a + b + c +d +2m=2\kappa$. Since by definition $|NA|\ge\kappa$ and  $|NB|\ge\kappa$, we get $|NA|=|NB|=\kappa$. Thus $A$ and $B$ are in $\cc$ and Axiom $(A1)$ is proved.
\end{proof}

\begin{exam}\label{exam:3fan}
We define $X_n$ for $n\ge 3$ by $VX_n=\{a,b,c,d,1,2,\ldots,n\}$. There is a path $1,2,\ldots,n$, a circle $a,b,c,d,a$ and each of the vertices $1,2,\ldots,n$ is adjacent to each of the vertices $a,b$, see Figure~\ref{fig:finiteblock}. The graph is 2-connected, but there are no 2-inseparable sets which are separated by 2-element sets of vertices. But there are 3-inseparable sets (the sets $\{k,k+1,a,b\}$) which are pairwise separated from each other by 3-element sets. Hence $\kappa=3$. Set $C_k=\{1,2,\ldots,k-1\}$, $D_k=\{k+1,k+2,\ldots,n\}$.
The cut-system of Example~\ref{exam:finitecuts} is
\[\cc_n=\{C_k,D_k\mid 2\le k\le n-1\}.\]
A further discussion of this cut-system can be found in Example~\ref{exam:3fan_detail}.
\end{exam}
\vskip-4mm
\begin{figure}[htbp]
\centering
\begin{tikzpicture}[scale=0.35]    
    \path (3,10.3) coordinate (p0);
    \path (1.1,8.2) coordinate (p1);
    \path (0.1,5.7) coordinate (p2);
    \path (0,3.1) coordinate (p3);
    \path (0.9,1) coordinate (p4);
    \path (2.3,-0.5) coordinate (p5);
    \path (5,2) coordinate (p6);
    \path (5,7) coordinate (p7);
    \path (6.8,6.6) coordinate (p8);
    \path (6.8,2.3) coordinate (p9);
    \draw (p1)--(p7)-- (p0)-- (p1) -- (p2) -- (p3)-- (p4)-- (p5)-- (p6)-- (p9) -- (p8) --  (p7) -- (p6);
    \draw (p2) -- (p7) -- (p3)-- (p7)-- (p4)-- (p7) -- (p5);
    \path (intersection of p0--p6 and p1--p7) coordinate (0a);
 \filldraw[white] (0a) circle (3pt);
     \path (intersection of p0--p6 and p2--p7) coordinate (0b);
 \filldraw[white] (0b) circle (3pt);
     \path (intersection of p0--p6 and p3--p7) coordinate (0c);
 \filldraw[white] (0c) circle (3pt);
     \path (intersection of p0--p6 and p4--p7) coordinate (0d);
 \filldraw[white] (0d) circle (3pt);
     \path (intersection of p0--p6 and p5--p7) coordinate (0e);
 \filldraw[white] (0e) circle (3pt);
     \path (intersection of p1--p6 and p2--p7) coordinate (1a);
 \filldraw[white] (1a) circle (3pt);
     \path (intersection of p1--p6 and p3--p7) coordinate (1b);
 \filldraw[white] (1b) circle (3pt);
     \path (intersection of p1--p6 and p4--p7) coordinate (1c);
 \filldraw[white] (1c) circle (3pt);
     \path (intersection of p1--p6 and p5--p7) coordinate (1d);
 \filldraw[white] (1d) circle (3pt);
     \path (intersection of p2--p6 and p3--p7) coordinate (2a);
 \filldraw[white] (2a) circle (3pt);
     \path (intersection of p2--p6 and p4--p7) coordinate (2b);
 \filldraw[white] (2b) circle (3pt);
     \path (intersection of p2--p6 and p5--p7) coordinate (2b);
 \filldraw[white] (2b) circle (3pt);
     \path (intersection of p3--p6 and p4--p7) coordinate (3a);
 \filldraw[white] (3a) circle (3pt);
     \path (intersection of p3--p6 and p5--p7) coordinate (3a);
 \filldraw[white] (3a) circle (3pt);  
     \path (intersection of p4--p6 and p5--p7) coordinate (3a);
 \filldraw[white] (3a) circle (3pt);
 
   \draw (p0) -- (p6) -- (p1);
   \draw (p2) -- (p6) -- (p3);
   \draw (p4) -- (p6);
       
    \filldraw (p0) circle (3pt) ;
    \filldraw (p1) circle (3pt) ;
    \filldraw (p2) circle (3pt) ; 
    \filldraw (p3) circle (3pt) ;
    \filldraw (p4) circle (3pt) ;
    \filldraw (p5) circle (3pt) ;
    \filldraw (p6) circle (3pt) ; 
    \filldraw (p7) circle (3pt) ;
    \filldraw (p8) circle (3pt) ;
    \filldraw (p9) circle (3pt) ;      
       
\draw (p0) node [above] {1};
\draw (p1) node [above] {2\,\,};
\draw (p2) node [left] {3};
\draw (p5) node [below] {$n$};
\draw (p6) node [below] {$a$};
\draw (p7) node [above] {\ $b$};
\draw (p8) node [right] {$c$};
\draw (p9) node [right] {$d$};

\end{tikzpicture}
\vskip-1mm\caption{Finite 2-connected graph with 3-inseparable blocks}\label{fig:finiteblock}\vskip-5mm
\end{figure}
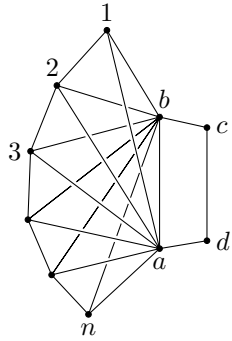

\section{Thin subsystems}

\begin {defi} \label {thin} Let $\cc$ be a cut system in a connected graph $X$. Let $\kappa$ be the smallest cardinality of a separator. A separator with $\kappa$ elements is a \emph{thin separator}. A cut $C$ in $\cc$ is called \emph{thin} if $|NC|=\kappa$.  A pre-cut $E$ is \emph{thin} if $E$ or $E^*$ is a thin cut.
\end {defi}

.

\begin{lemm}\label{lemm:intersection}
Let $C$ and $D$ be thin pre-cuts and assume that $C\cap D$ and $C^*\cap D^*$ contain a cut. Then $N(C\cap D)$ and $N(C^*\cap D^*)$ are thin separators. If $E$ is a cut component of $C\cap D$ then $NE=(C\cap ND)\cup (NC\cap ND)\cup (D\cap NC)$. Similarly, if $E$ is a cut component of $C^*\cap D^*$ then $NE=(C^*\cap ND)\cup (NC\cap ND)\cup (D^*\cap NC)$.

Moreover, $|C\cap ND|=|D\cap NC|$,  $|C^*\cap ND|=|D^*\cap NC|$ and $NC\cap ND=N(C\cap D)\cap N(C^*\cap D^*)$.  
\end{lemm}

%In \cite[Theorem 2]{Jung1977} and \cite[Proposition 2.1]{Jung1993}, Jung and Watkins prove a similar result. 

\begin{proof}
This is similar to the proof for Example~\ref{exam:finitecuts} and we again consider Figure~\ref{fig:cross}.
From the diagram, $\kappa = a + m + c = b + m + d$ and hence
\begin{equation}\label{equa:4n}
2\kappa = a +b +c +d +2m.
\end{equation}
Also
$$\openup2pt \displaylines{
|N(C\cap D)| \le a + d + m,\cr
|N(C\cap D^*)| \le a + b + m,\cr
|N(C^*\cap D^*)| \le b+c + m,\cr
|N(C^*\cap D)| \le c+ d + m. \cr}
$$
By Axiom (A1), the corners $C\cap D$ and $C^*\cap D^*$ both have at least one cut component. Hence $a + d + m\ge \kappa$ and $b+c + m\ge \kappa$. Now (\ref{equa:4n}) implies that these inequalities are equalities and $|N(C\cap D)|=|N(C^*\cap D^*)| =\kappa$. If $a<b$ then $d<c$, otherwise $a+m+c<\kappa$, and hence $a+m+d<\kappa$ which is impossible. If $b<a$ we get a contradiction in the same way and hence $a = b$, $c = d$. We saw that
\[N(C\cap D)= (C\cap ND)\cup (NC\cap ND)\cup (D\cap NC)\quad\mbox{and}\]
\[N(C^*\cap D^*)= (C^*\cap ND)\cup (NC\cap ND)\cup (D^*\cap NC).\]
From this we get $N(C\cap D)\cap N(C^*\cap D^*)=NC\cap ND$. If $E$ is a cut component of $C\cap D$ then $|NE|\ge \kappa$ and since $NE\subset N(C\cap D)$ we get $NE= N(C\cap D)$. Similarly, if $E$ is  cut component of $C^*\cap D^*$ then $NE=N(C^*\cap D^*)$.
\end{proof}

\begin{coro}
The thin cuts in a cut system form a cut system.
\end{coro}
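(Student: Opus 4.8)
The plan is to check the three axioms for $\cc_{\min}$, the set of cuts $E$ in $\cc$ with $|NE|=\kappa$, by reusing the corner‑counting already carried out in the proof for Example~\ref{exam:finitecuts} and in Lemma~\ref{lemm:intersection}. Members of $\cc_{\min}$ are automatically non‑empty connected sets with finite boundary, since $\cc_{\min}\subseteq\cc$, so only (A1)--(A3) are at issue; and if $\cc\neq\varnothing$ then $\cc_{\min}\neq\varnothing$, because $\kappa$, being the minimum of a non‑empty set of non‑negative integers, is attained (if $\cc=\varnothing$ there is nothing to prove).

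The substantive step is (A3), and it is essentially Lemma~\ref{lemm:intersection}. Let $E,F\in\cc_{\min}$. Applying (A3) for $\cc$ and, if necessary, swapping names, we obtain a component $P$ of $E\cap F$ and a component $Q$ of $E^*\cap F^*$ with $P,Q\in\cc$. Since distinct components of $E\cap F$ are never adjacent, $NP\subseteq N(E\cap F)\subseteq(E\cap NF)\cup(NE\cap F)\cup(NE\cap NF)$, so $|NP|\le a+d+m$ in the notation of Figure~\ref{fig:cross}, and symmetrically $|NQ|\le b+c+m$. Minimality of $\kappa$ gives $|NP|,|NQ|\ge\kappa$, while $|NE|=|NF|=\kappa$ gives $a+b+c+d+2m=2\kappa$; adding the two bounds gives $|NP|+|NQ|\le 2\kappa$, so both equal $\kappa$. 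Hence $P,Q\in\cc_{\min}$, which is exactly (A3) for $\cc_{\min}$.

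For (A2): if $C\in\cc_{\min}$ and $K$ is a component of $C^*$ containing some $D\in\cc_{\min}\subseteq\cc$, then $K\in\cc$ by (A2) for $\cc$; and a vertex adjacent to $K$ but not in $K$ can lie neither in $C$ (else its neighbour in $K$ would be in $C\cup NC$, contradicting membership in $C^*$) nor in another component of $C^*$, so it lies in $NC$. Hence $NK\subseteq NC$, so $|NK|\le\kappa$, forcing $|NK|=\kappa$, i.e.\ $K\in\cc_{\min}$. For (A1): pick any $E_0\in\cc_{\min}$ and apply the now‑established (A3) for $\cc_{\min}$ to the pair $C,E_0$; in either of the two resulting alternatives, one of the two cuts produced lies inside $C^*$ and belongs to $\cc_{\min}$, so $C^*$ contains an element of $\cc_{\min}$.

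The one place to be careful is the counting in (A3): one must take $P$ and $Q$ to be \emph{components} of the corners $E\cap F$ and $E^*\cap F^*$ (which may themselves be disconnected), and justify $NP\subseteq N(E\cap F)$ via the observation that distinct components of $E\cap F$ are non‑adjacent. Beyond that, every inequality is the same bookkeeping with Figure~\ref{fig:cross} that already appears in the proofs given above, so no new difficulty is expected.
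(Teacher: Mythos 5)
Your proof is correct and follows essentially the same route as the paper: the paper declares (A1) and (A2) clear and obtains (A3) from the counting in Lemma~\ref{lemm:intersection}, and your argument is exactly that counting ($2\kappa=a+b+c+d+2m$ forcing both opposite components to have boundary of size $\kappa$), just carried out directly on the components supplied by (A3) for $\cc$ rather than by citing the lemma. The extra care you take (components of a corner are non-adjacent, $NK\subseteq NC$ for (A2), the self-pairing trick for (A1)) only makes explicit what the paper leaves as "clear."
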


\begin{proof}
Suppose that two opposite corners of two thin cuts contain thin cuts. By Axiom (A1) for the ambient cut-system, the components of the corners which contain these cuts, are cuts in the ambient system. Lemma~\ref{lemm:intersection} implies that these cuts are thin which implies Axiom (A1) for the set of thin cuts. Axiom (A2) for the set of thin cuts follows analogously.
\end{proof}

\begin {defi}\label {isolated} A corner of two sets of vertices is called \emph{isolated} if it does not contain a cut and if its adjacent links are both
 empty.\end{defi}
 \begin {defi} \label {nested}  We call two sets of vertices \emph{nested} if they have an isolated corner.
 \end {defi}
\begin{exam}\label{exam:3fanB}
Consider Example~\ref{exam:3fan}. Then the cuts $C=\{1,2,\ldots,k\}$ and $D=\{l,l+1,\ldots,n\}$ have $\{c,d\}$ as non-empty isolated corner for $l-k\le 2$.
\end{exam}

\begin{lemm}\label{lemm:all_links}
Let $E$ and $F$ be pre-cuts. If $E\subset F$ then $E\cap F^*$ is an empty isolated corner.

If a corner of two thin cuts does not contain any cut and one of its links is empty then both links are empty (and the corner is isolated).

Pairs of thin cuts have either (i) no empty link or (ii) two non-empty links and two empty links which are adjacent to an isolated corner or (iii) all four links are empty, two opposite corners contain a cut and at least one corner is empty. In case (i) the cuts are not nested, in the cases (ii) and (iii) they are.
\end{lemm}

\begin{proof}
If $E\subset F$ then $E\cap F^*$ and $E\cap NF$ are empty. A vertex $x$ in  $F^*\cap NE$ would have to be adjacent to a vertex in $(E\cap F^*)\cup (E\cap NF)$, because it cannot be adjacent to a vertex in $E\cap F$. Hence $F^*\cap NE$ is also empty and $E\cap F^*$ is an empty isolated corner.

Let $C$ and $D$ be thin cuts of a cut system. By (A2) there are two opposite corners $K_1,K_2$ which contain a cut.
We saw in the proof of Lemma \ref{lemm:intersection} that the two links of any corner other than $K_1,K_2$ must have the same number of elements. Thus if one of these two links is empty then so is the other.
In particular, it is not possible for  thin cuts to have exactly one or  three empty links.

If (iii) all links are empty then one corner has to be empty, otherwise one of the cuts would not be connected.
Suppose (ii) there are precisely two empty links. If these links are opposite or adjacent with an adjacent corner that contains a cut, then $m=|NE\cap NF|=\kappa$ which would imply that all four links are empty. Thus if there are precisely two empty links then they are adjacent and their adjacent corner is isolated.
\end{proof}

\begin {defi} \label {slice} We define a \emph{slice} (or \emph{$\cc$-slice}) to be a component of $VX \setminus S$ that is not a cut, where $S$ is a separator.
\end {defi}
\begin{coro}\label{coro:not_nested_corner}

If a thin cut-system has no slices then pre-cuts $E,F$ are nested if and only if
\[E\subset F,\quad E\subset F^*,\quad E^*\subset F\quad \emph{or}\quad E^*\subset F^*.\]
\end{coro}

\begin{proof}
That two pre-cuts are nested means that they have an isolated corner. If there are no slices then this corner is empty. Since its adjacent links are also empty, one of the inclusions hold.

If one of the inclusions holds then there is an empty corner with an empty adjacent link. Lemma~\ref{lemm:all_links} implies that the second adjacent link is also empty and hence this corner is isolated.
\end{proof}

In other papers, sets or vertices (or other sets) $E,F$ are usually called \emph{nested} if one of the following inclusions holds: $E\subset F$, $E\subset VX\setminus F$, $VX\setminus E\subset F$, $VX\setminus E\subset VX\setminus F$. It may happen for instance that $E\subset VX\setminus F$, but none of the inclusions of Corollary~\ref{coro:not_nested_corner} holds. For example, let $X$ be a cycle of length four, let $x,y$ be adjacent vertices and $E=\{x\}$, $F=\{y\}$. Thus being ``nested'' in the present paper is similar to the usual notation, but it is not a generalization.

\begin{lemm}\label{lemm:lemmathin}
If $E$ and $F$ are thin pre-cuts then there is a cut component of $E$ which contains $E\cap NF$.
\end{lemm}

\begin{proof}
Axiom (A2) implies that there is an $A$ in $\{E\cap F,E\cap F^*\}$ which contains a cut and whose opposite corner also contains a cut. By Axiom (A1) this corner $A$ has a cut component $C$ and Lemma~\ref{lemm:intersection} implies $|NA|=\kappa$. Now $NC\subset NA$ and $|NC|\ge\kappa$, because $C$ is a cut. This implies $NC=NA$. Let $E_0$ be the component of $E$ which contains $C$. Axiom (A1) applied to $E_0$ with itself implies that $E_0$ is a cut, because $E_0^*$ contains a cut. By Lemma~\ref{lemm:intersection}, every vertex $x$ in $E\cap NF$ is adjacent to $C$. Hence $x$ is in $E_0$ and $E\cap NF \subset E_0$. Note that we have not excluded the case $E\cap NF=\emptyset$.
\end{proof}

A cut is called an \emph{$A$-cut} if it is nested with all other thin cuts.
It is called a \emph{$B$-cut} if its $*$-complement has only one cut component.

\begin{theo}\label{theo:connectedness}
A thin cut is either an $A$-cut or a $B$-cut.
\end{theo}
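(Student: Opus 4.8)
The plan is to argue by contradiction: suppose $E$ is a minimal cut that is neither an $A$-cut nor a $B$-cut, and derive an impossibility. Since $E$ is not a $B$-cut, $E^*$ has at least two large components; since a large set minus a separator is still large (the reformulation of (A3) noted after the axioms), and since any component of $E^*$ that is a cut lies in $\cc$ by (A2), we may assume $E$ has been chosen so that $E^*$ contains two distinct minimal cuts, call them $P$ and $Q$, lying in different components of $E^*$. Because $E$ is not an $A$-cut, there is some minimal cut $F$ with which $E$ is not nested. By Lemma~\ref{lemm:all_links}, $E$ and $F$ then fall into case (i): no link is empty, and the two opposite corners that are large — say $E\cap F$ and $E^*\cap F^*$ (after possibly replacing $F$ by $F^*$), which we may do since $F^*$ is also a minimal cut by symmetry of the axioms — are both large, while the other two corners $E\cap F^*$ and $E^*\cap F$ are small with all four links non-empty.

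The key step is to exploit the counting identity from Lemma~\ref{lemm:intersection}. With $a=|E\cap NF|$, $b=|F^*\cap NE|$, $c=|E^*\cap NF|$, $d=|F\cap NE|$, $m=|NE\cap NF|$ and $2\kappa = a+b+c+d+2m$, the fact that $E\cap F$ and $E^*\cap F^*$ are large forces, exactly as in the proof of Lemma~\ref{lemm:intersection}, the equalities $a=b$, $c=d$, $m_1=m_3=m$, and $|N(E\cap F)|=|N(E^*\cap F^*)|=\kappa$. I would now locate $P$ and $Q$ relative to $F$: since they are minimal cuts sitting inside $E^*$ in distinct components of $E^*$, each of $P,Q$ lies (up to the separator) inside one of the corners $E^*\cap F$ or $E^*\cap F^*$, and a path witnessing that $F$ (or $F^*$) is connected must, if $F$ meets both components of $E^*$, pass through $NE$. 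The goal is to show that $F$ cannot simultaneously be a cut with $|NF|=\kappa$ and interact with both large components of $E^*$: intersecting $F$ with the appropriate component of $E^*$ yields a cut whose boundary is strictly smaller than $\kappa$ (because we lose part of the link $F\cap NE$ or $F^*\cap NE$, which is non-empty in case (i)), contradicting minimality of $\kappa$ — unless $E\cap F^*$ or $E\cap F$ is actually empty, which is excluded in case (i). More carefully, the dichotomy I want is: the non-nestedness of $E$ with $F$ (case (i)) forces $F$ to "cross" the separator $NE$ in a way that splits one of the large components of $E^*$, and then applying (A3) to $F$ together with that component produces a cut contradicting either the minimality of $\kappa$ or the choice of $E$; alternatively, if $F$ stays within a single component of $E^*$ (together with $NE$), then $E$ and $F$ are in fact nested, contrary to assumption.

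The main obstacle will be the careful bookkeeping of which corners and links are empty and tracking the separators through the intersection operations — in particular, ruling out the degenerate configurations where $F$ meets $E^*$ only inside the links $NE\cap\,\cdot$ and contributes nothing to the interiors of the components. One clean way to organize this is to first prove the auxiliary statement that if $E$ is a minimal cut and $E^*$ has two large components $C_1,C_2$ (each a cut), then any minimal cut $F$ either has $F$ or $F^*$ nested with $E$, by running the corner-counting argument of Lemma~\ref{lemm:intersection} twice, once against each $C_i$, and observing that $F$ cannot "use up" boundary in both directions while keeping $|NF|=\kappa$; this forces $F$ to be nested with $C_1$ or with $C_2$, hence with $E$. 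Combining this with Lemma~\ref{lemm:all_links} then gives that $E$ is nested with all minimal cuts, i.e. $E$ is an $A$-cut, completing the contradiction. I would present the argument in exactly this order: (1) set up the contradiction and extract $F$ via Lemma~\ref{lemm:all_links} case (i); (2) extract the two large components of $E^*$ via failure of the $B$-cut condition and (A1),(A2); (3) run the counting lemma against each component to force nestedness; (4) conclude.
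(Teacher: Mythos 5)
Your skeleton (argue by contradiction, extract a non-nested minimal cut $F$ from the failure of the $A$-cut condition, invoke case (i) of Lemma~\ref{lemm:all_links}, extract two large components of $E^*$ from the failure of the $B$-cut condition, and bring in the counting of Lemma~\ref{lemm:intersection}) matches the paper's frame, but the decisive steps are either unjustified or wrong. The central counting claim --- that intersecting $F$ with the appropriate component of $E^*$ yields a cut whose boundary is strictly smaller than $\kappa$ ``because we lose part of the link $F\cap NE$ or $F^*\cap NE$'' --- has no justification: each large component $C_i$ of $E^*$ satisfies $NC_i=NE$ (its boundary lies in $NE$ and has at least $\kappa$ elements), so nothing forces the boundary of $F\cap C_i$ below $\kappa$, nor is that intersection known to contain a cut at all. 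The concluding inference ``$F$ is nested with $C_1$ or with $C_2$, hence with $E$'' is a non sequitur: of the four containments expressing nestedness with $C_i$, only $F\subset C_i$ and $F^*\subset C_i$ give nestedness with $E$, while $C_i\subset F$ or $C_i\subset F^*$ do not (they would instead force the link $F^*\cap NE$ resp.\ $F\cap NE$ to be empty --- a different contradiction you never draw --- and in any case you never prove that $F$ is nested with either $C_i$). The other branch of your dichotomy, ``if $F$ stays within a single component of $E^*$ together with $NE$ then $E$ and $F$ are nested,'' is also false as stated: $F\subset E^*\cup NE$ makes the corner $E\cap F$ empty, but its links $E\cap NF$ and $F\cap NE$ can both be nonempty, so there need be no isolated corner.

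What must actually be extracted from the counting is different, and it is the missing idea. For the large corner $A\in\{E^*\cap F,E^*\cap F^*\}$ with large opposite corner, Lemma~\ref{lemm:intersection} gives $|NA|=\kappa$ and, because $NA$ is contained in the union of its two links and the centre, which has exactly $\kappa$ elements, $NA$ equals that whole union; a cut component $C$ of $A$ then has $NC=NA\supset E^*\cap NF$, so the entire link $E^*\cap NF$ lies in the one component $E_0^*$ of $E^*$ containing $C$. Now a second large component $E_1^*$ of $E^*$ has $NE_1^*=NE$ and is disjoint from $NF$; since in case (i) both $F\cap NE$ and $F^*\cap NE$ are nonempty, $E_1^*$ connects some $y\in F\cap NE$ to some $z\in F^*\cap NE$ by a path whose interior lies in $E_1^*$ and hence avoids $NF$ --- impossible, as every path from $F$ to $F^*$ must meet $NF$. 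It is this connectivity argument about the second component of $E^*$, not a count of boundary sizes, that kills the second large component; your path remark concerns paths inside $F$ crossing $NE$, which is the wrong set, and your proposal cannot be completed without essentially supplying the argument above.
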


\begin{proof}
Let $C$ be a thin cut which is not an $A$-cut. Then there is a thin cut $D$ which is not nested with $C$. By Lemma~\ref{lemm:lemmathin}, there is a cut component $C_0^*$ of $C^*$ which contains $C^*\cap ND$. In order to prove that $C$ is a $B$-cut we have to show that $C^*$ has no other cut component. 

Suppose there is another cut component $C_1^*$ of $C^*$. Then $NC_1^*$ is a separator. Hence $NC_1^*=NC^*$, otherwise $NC_1^*$ would have less than $\kappa$ elements. Now $C_1^*\cap C^*\cap ND=C_1^*\cap ND=\emptyset$, because, $C^*\cap ND\subset C_0^*$. There is an element $y$ in $D\cap NC^*$ and an element $z$ in $D^*\cap NC^*$, because, by case (i) of Lemma~\ref{lemm:all_links}, all links are not empty. Since $NC_1^*=NC$ and $C_1^*$ is connected, there is a path from $y$ to $z$ which is completely contained in $C_1^*$, except for its end-vertices $y$ and $z$. This path has to intersect $C^*\cap ND$ which contradicts $C_1^*\cap ND=\emptyset$.
\end{proof}

\begin{lemm}\label{lemm:slice_trivial}
For any cut system, components of isolated corners are slices.
\end{lemm}

\begin{proof}
Let $Q$ be a component of an isolated corner of $C$ and $D$. Then $NQ\subset NC\cap ND$ because the links of this corner are empty.
Since $C$ and $D$ are connected, $Q$ has to be a component of $C^*\cap D^*$. Hence $Q$ is a component of $C^*$ as well as a component of $D^*$, and $Q$ does not contain a cut, so it is not cut, by Axiom (A1).
\end{proof}

\begin{lemm}\label{lemm:slice}
Let $\cc$ be a thin cut system. A slice has empty intersection with each separator. Distinct slices are disjoint. If $Q$ is a slice, then no pair of elements of $NQ$ are separated by any separator.
\end{lemm}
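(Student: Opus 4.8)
The plan is to prove the three assertions of Lemma~\ref{lemm:slice} in turn, using Lemma~\ref{lemm:intersection}, Theorem~\ref{theo:connectedness} and the structure of minimal cuts established in Lemma~\ref{lemm:all_links}.

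First I would handle disjointness of distinct slices. Suppose $Q_1$ and $Q_2$ are slices, components of $VX\setminus S_1$ and $VX\setminus S_2$ respectively, where $S_1=NE$ and $S_2=NF$ for minimal cuts $E,F$. If $Q_1\cap Q_2\neq\emptyset$, then (after possibly replacing $E$ by $E^*$'s relevant component and similarly for $F$) $Q_1$ and $Q_2$ lie in a common corner of $E$ and $F$. If $E$ and $F$ are nested, an isolated corner forces the other corners and links to be arranged so that one of $Q_1,Q_2$ is contained in the other together with part of a separator; but a slice is by definition \emph{not} a cut, and I would argue that a proper containment of this type, combined with minimality of $S_1,S_2$, forces $Q_1=Q_2$ (any strictly smaller or larger connected piece would either be a cut or would contradict $|S_i|=\kappa$). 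If $E$ and $F$ are not nested, case~(i) of Lemma~\ref{lemm:all_links} applies, all corners are large, and Lemma~\ref{lemm:intersection} gives $|N(E\cap F)|=\kappa$; then the component of the corner containing $Q_1\cap Q_2$ is a cut, so neither $Q_1$ nor $Q_2$ can be a slice — contradiction. Thus distinct slices are disjoint.

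Next, that a slice $Q$ has empty intersection with every separator. Let $Q$ be a component of $VX\setminus NE$ that is not a cut, and let $NF$ be any separator. If some vertex $v\in Q\cap NF$, then $v$ lies in the centre or a link relative to $E,F$; chasing through Lemma~\ref{lemm:all_links}, in the non-nested case all links are nonempty and $Q$ meets two adjacent corners which are large, making $Q$ itself large and hence (being connected with $N Q\subseteq NE$ of size $\le\kappa$, and using the argument in the proof of Theorem~\ref{theo:connectedness} that a large component of $E^*$ has boundary exactly $NE$) a cut — contradiction; in the nested case the isolated empty corner and empty links force $NF$ to avoid $Q$ entirely, since $Q$ sits strictly inside one corner whose adjacent links are empty. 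For the third assertion, suppose $x,y\in NQ$ were separated by a separator $NF$, say $x\in F\cup NF$ and $y\in F^*\cup NF$ with neither in $NF$. Both $x$ and $y$ are adjacent to $Q$, and $Q$ is connected and (by the second assertion just proved) disjoint from $NF$, so $Q$ lies entirely in one of $F$ or $F^*$, say $F$; then $y\in F^*$ is adjacent to a vertex of $Q\subseteq F$, which is impossible since no vertex of $F$ is adjacent to a vertex of $F^*$. Hence no two elements of $NQ$ are separated by any separator.

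The main obstacle I expect is the first part: pinning down precisely why two slices that overlap must coincide, rather than one being properly contained in the other. The delicate point is that a proper connected sub-piece $Q'\subsetneq Q$ obtained by intersecting with another corner could a priori have boundary of size $\kappa$ and thus be a cut, or could have boundary of size $<\kappa$ and contradict minimality; I need the containment relations among corners and links from Lemma~\ref{lemm:all_links}(ii),(iii) to show that in the nested case the corner containing the overlap actually equals $Q$ on the nose (because the separating sets have the same size and the isolated corner is empty, so no vertices are "lost"). Once that bookkeeping is done carefully, the other two assertions follow cleanly from connectedness of $Q$ and the basic fact that no vertex of $F$ is adjacent to a vertex of $F^*$.
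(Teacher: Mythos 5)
Your third assertion is handled correctly and in essentially the same way as the paper, but the first two parts contain genuine gaps, largely because you prove them in the wrong order. The missing idea is the fact the paper extracts from the proof of Theorem~\ref{theo:connectedness}: when $E$ and $F$ are not nested, every vertex of $E^*\cap NF$ is adjacent to the cut component of a large corner and hence lies in the \emph{single} large component $E_0^*$ of $E^*$; consequently $NF$ meets no other component of $E^*$, in particular no slice. That is what proves ``a slice meets no separator'' in the non-nested case. Your substitute --- that a vertex $v\in Q\cap NF$ forces $Q$ to meet two adjacent \emph{large} corners and hence be large --- does not hold up: in case (i) of Lemma~\ref{lemm:all_links} only one pair of \emph{opposite} corners is guaranteed large; the neighbour of $v$ in $F$ may lie in $NE$ rather than in $E^*$, so $Q$ need not meet $E^*\cap F$ at all; and meeting a large corner does not make $Q$ itself large. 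In the nested case you assert that $Q$ ``sits strictly inside one corner'', but when the isolated corner lies on the $E$-side, $NF$ may still meet $E^*$, and one needs the paper's extra step of passing to a cut component of $E^*$ meeting $NF$ and reducing to the case already treated; your version skips this.

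The first part (disjointness of slices) is where the approach really fails. Because you attempt it \emph{before} knowing that slices avoid separators, you cannot place $Q_1$ or $Q_2$ inside a single corner of $E,F$: a priori $Q_1$ may meet $F$, $NF$ and $F^*$. In the non-nested case your chain ``$|N(E\cap F)|=\kappa$, hence the component of the corner containing $Q_1\cap Q_2$ is a cut, hence $Q_1$ is not a slice'' breaks at every link: Lemma~\ref{lemm:intersection} needs the specific corners $E\cap F$ and $E^*\cap F^*$ to be large, whereas the corner relevant here is $E^*\cap F^*$, which need not be one of the large pair; a large corner only has \emph{some} component which is a cut, not necessarily the component meeting $Q_1\cap Q_2$; and even if that component were a cut, $Q_1$ is a component of $E^*$, not of the corner, so it could properly contain that component. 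In the nested case you yourself concede the bookkeeping is not done. The paper avoids all of this by establishing separator-avoidance first; disjointness is then a two-line consequence of connectedness: $Q_1\cap NQ_2\subset Q_1\cap NF=\emptyset$ and $Q_2\cap NQ_1\subset Q_2\cap NE=\emptyset$, so either $Q_1=Q_2$ or $Q_1\cap Q_2=\emptyset$. Reordering your argument in this way, and importing the fact $E^*\cap NF\subset E_0^*$ from the proof of Theorem~\ref{theo:connectedness}, would repair it.
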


\begin{proof}
Let $Q_1$, $Q_2$ be distinct slice components of $VX\setminus NC$ and $VX \setminus ND$ respectively where $C$, $D$ are thin cuts. By Lemma~\ref{lemm:lemmathin}, the links $ND\cap C$ and $ND\cap C^*$ are contained in cut components of $VX\setminus NC$. Hence they are disjoint from $Q_1$. It follows that $ND\cap Q_1=\emptyset$.

Consider corners and links of the pair  $Q_1$, $Q_2$.  We have shown that the links $Q_1\cap NQ_2$, $Q_2\cap NQ_1$ are empty. 
By the connectedness of $Q_1$, $Q_2$ this means that either $Q_1 = Q_2$ or $Q_1\cap Q_2 = \emptyset $.

Finally suppose $x, y \in NQ$ for some slice $Q$ and $x, y$ are separated by $NC$ for some cut $C$.  Now $Q$ is connected and
the path in $Q$ joining $x, y$ must intersect the separator $NC$, which we have already shown cannot occur.
\end{proof}

\begin{coro}
If a group $G$ acts transitively on a connected graph with a $G$-invariant thin cut system, then there are no slices.
\end{coro}

Let $X$ be a graph with a cut system of thin cuts. Lemma~\ref{lemm:slice} enables us to replace $X$ with another graph $\hat X$ with essentially the same cut
system, but in which there are no slices.   In this new cut system two cuts are nested if and only if there is an empty corner with empty adjacent links.

Let, then, $X$ be a graph with a cut system $\cc$ of thin cuts.
We define $\hat X$ as follows: $V\hat  X \subset VX$ and $v \in V\hat X$ if and only if $v\notin Q$ for every slice $Q$.
Note that if $v \in NC$ for some $C\in \cc$, then, by Lemma \ref{lemm:slice}, $v \in V\hat X$.
Two vertices $u,v \in V\hat X$ are joined by an edge in $\hat X$ if and only if
$u, v$ are joined by an edge in $X$ or if $u,v \in NQ$ for some slice $Q$.
We define $\hat \cc$ to be the set of sets $\hat C$, where $\hat C = C\cap V\hat X$ for some $C \in \cc$.

\begin{theo}\label{theo:hat}
Let $\cc$ be a thin cut system in $X$. Then $\hat X$ is connected and $\hat \cc$ is a cut system of thin cuts in $\hat X$. The separators of $\cc$ are the same as the separators of $\hat \cc$.
There are no slices in $\hat X$ with respect to $\hat \cc$.
\end{theo}

\begin{proof}
First we show that $N\hat C$ in $\hat X$ is the same as $NC$ in $X$, for all $C\in\cc$.
Suppose there is a vertex $x$ in $N\hat C\setminus NC$. This vertex is not contained in any $\cc$-slice in $X$. If $x$ were in $C$ then it would be in $\hat C$. So $x$ is in $C^*$. Since $x\in N\hat C$, there is a $y$ in $\hat C$ which is a  neighbour of $x$ in $\hat X$. Then $\{x,y\}$ is an edge in $\hat X$ but not an edge in $X$, because $y\in C$. This can only happen if $x$ and $y$ are in $NQ$ for some $\cc$-slice $Q$ in $X$. By Lemma~\ref{lemm:slice}, $Q\cap NC=\emptyset$. Since $y\in\hat C$ and $\hat C\subset C$ we have $Q\subset C$. Slices are connected, thus there is a path from $y$ to $x$ in $X$ which is contained in $Q$ except for its last vertex $x$. This is a path from a vertex in $C$ to a vertex in $C^*$ which is disjoint from $NC$, a contradiction.

Suppose there is a vertex $x$ in $NC\setminus N\hat C$. This vertex $x$ has a  neighbour $y$ in $C\setminus \hat C$. By the definition of $\hat C$, $y$ is contained in a slice component $Q$ of $VX\setminus ND$ for some $D\in\cc$.
If $NC=ND$ then $C$ and $Q$ would be distinct components in the complement of $NC=ND$, because $Q$ is a slice and $C$ is a cut. This is impossible since $y\in C\cap Q$. Hence $NC\ne ND$ and since $Q\subset C$ there must be a vertex $z$ in $NQ\cap C$. By Lemma~\ref{lemm:slice}, separators and slices are disjoint, hence $z$ is not contained in any slice and $z\in\hat C$. The vertices $x,z$ are both in $NQ$ and therefore adjacent in $\hat X$.
This is a contradiction, because $x$ is neither in $\hat C$ nor in $N\hat C$ but $z$ is in $\hat C$. Thus $NC=N\hat C$.

Let $x, y \in V\hat  X$.  Since $X$ is connected, there is a path 
$x = x_1, x_2, \dots , x_n = y$ from $x$ to $y$ in $X$.  Let
$x = x_1 = y_1, y_2, \dots , y_m =x_n =y$ be the subsequence of $x_1,x_2, \dots , x_n$ obtained by deleting the vertices that lie in any slice.
We will show that this subsequence 
is a path in $\hat X$ from $x$ to $y$.   If $y_i = x_j$  and $y_{i+1} = x_{j+1}$,   then $y_i$ is adjacent to $y_{i+1}$ in both $X$ and $\hat X$. 
If $y_i = x_j$ and $y_{i+1} = x_{j+r}$ where $r >1$, then $x_{j+1}, \dots , x_{j+r-1}$ all lie in the same slice $Q$. This is because adjacent vertices
cannot lie is distinct slices,  since distinct slices are disjoint and do not intersect any separator by Lemma~\ref{lemm:slice}. This means that $y_i$ and $y_{i+1}$
are in $NQ$ and so they are joined by an edge in $\hat X$.
Thus $\hat X$ is connected.

Next we show that $\hat\cc$ is a cut system. 
For $C\in\cc$, we need to show that $\hat C = C\cap V\hat X$ is connected in $\hat X$.  Any two points of $\hat C$ are joined by a path with vertices
in $C$. If we delete those vertices of the path which are contained in some slice then, as above, the resulting sequence will be a path in $\hat C$, because in $\hat X$ any pair of vertices in the boundary of some $\cc$-slice is joined by an edge.

Suppose there are opposite corners of $\hat C,\hat D\in \hat \cc$, for $C,D\in \cc$, which contain elements $C_0,D_0$ of $\hat C$, respectively. Axiom (A1) for $\cc$ implies that the components $A,B$ of the corresponding corners in $X$ which contain $C_0,D_0$, are in $\cc$.  Then $\hat A$ and $\hat B$ are in $\hat \cc$ which implies Axiom (A1) for $\hat\cc$. Axiom (A2) for $\hat\cc$ is proved analogously. The cut system $\hat\cc$ is thin because $\cc$ is thin and $N\hat C=NC$ for all $C\in\cc$.

Finally, we show that there are no slices in $\hat X$ with respect to $\hat \cc$. Let $K$ be a component of $V\hat X\setminus N\hat C$, for some $\hat C\in\hat\cc$. Let $K'$ be the component of $VX\setminus NC$ in $X$ which contains $K$. This component $K'$ cannot be a slice with respect to $\cc$ in $X$, because no vertex of a slice in $X$ is a vertex of $\hat X$. Hence $K'$ is a cut in $\cc$ and thus $K=K'\cap V\hat X$ is a cut in $\hat\cc$.
\end{proof}

\begin{coro}\label{coro:nested_hat}
Let $C$ and $D$ be cuts in a thin cut system $\cc$. Then the following are equivalent:

\begin{enumerate}
\item[(i)] $C$ and $D$ are nested in $\cc$,
\item[(ii)] $\hat C$ and $\hat D$ are nested in $\hat\cc$,
\item[(iii)] one of the inclusions $\hat C\subset \hat D$, $(\hat C)^*\subset \hat D$, $\hat C\subset (\hat D)^*$, $(\hat C)^*\subset (\hat D)^*$ holds.
\end{enumerate}
\end{coro}

\begin{proof}
This follows from Corollary~\ref{coro:not_nested_corner} and Theorem~\ref{theo:hat}.
\end{proof}

\begin{lemm}\label{lemm:not_nested_corner}
Let $C,D,E$ be thin cuts.
\begin{enumerate}
\item[(i)] If $E$ is nested with $D$ then either $E$ is nested with each cut component  of $C\cap D$ and of $C^*\cap D$  or with each cut component  of 
$C\cap D^*$ and of $C^*\cap D^*$.
\item[(ii)] If $E$ is  nested with  $C$ and $D$, then $E$ is nested with each cut component of any corner of $C$ and $D$.
\end{enumerate}
\end{lemm}

\begin{proof}
We can assume that there are no slices by replacing $X$ with $\hat X$ if necessary, see Corollary~\ref{coro:nested_hat}. Suppose $E$ is nested with $D$.

Case 1. Suppose $C,D$ are $A$-cuts. Then they have an empty isolated corner. Its adjacent corners are either $C$ or $D$ or the cut components of  these adjacent corners are cut components of $C^*$ or $D^*$. Hence the cut components of the corners which are adjacent to the isolated corner are $A$-cuts. Thus $E$ is nested with the cut components of the corners which are adjacent to the isolated corner.

Case 2. If $C,D$ are $B$-cuts then $C^*, D^*$ are cuts, because we are considering $\hat X$. By changing $D$ to $D^*$  if necessary
 we  may assume that $E\subset  D^*$ or $E^*\subset D^*$.    If $E\subset D^*$ then $E\subset C\cup D^*$ and $E\subset C^* \cup D^*$ and so $E$ is nested with any cut component of 
 the corners $C^*\cap D= (C\cup D^*)^* $  or $C\cap D$.  Similarly if $E^* \subset D^*$, then $E^*$ is nested with any cut component of the same two corners.  But $E$ is nested with a cut if and only if $E^*$ is nested with the same cut.
 
If $E$ is nested with both $C$ and $D$, then by (i) it will be nested with the cut components of three of the four corners.
 In fact it will be nested with the cut components of all four corners, because if say $E \subset D^*$ and $E\subset C^*$, then the above shows that $E$ is nested with
the cut components of $C\cap D$, $C\cap D^*$, $C^*\cap D$.  But $E \subset C^*\cap D^*$ and so it is nested with the cut components of the fourth corner, too.
\end{proof}

\section{Separation by finitely many cuts}\label{sect:finitely_many}

We call a set $S$ of vertices a \emph{tight $x$-$y$-separator} if $VX\setminus S$ has two distinct components $A,B$ which are adjacent to all elements of $S$ and for which $x\in A$ and $y\in B$.
Note that this is a general graph-theoretic definition which does not refer to our axiomatic cut systems and their separators.

The following can be found similarly in papers by Halin \cite[Statement 2.4]{Halin1991}, \cite[Corollary 1]{Halin1992} or Thomassen and Woess \cite[Proposition~4.1]{thomassen1993}.

\begin{lemm}\label{lemm:finitely_many}
For every integer $k$ and every pair $x,y$ of vertices in a connected graph there are only finitely many tight $x$-$y$-separators of order $k$.
\end{lemm}

\begin{proof}
We use induction on $k$.
Any tight $x$-$y$-separator of order $1$ is a vertex in any given path joining $x$ and $y$ and so the lemma is true for $k=1$.

Suppose the lemma holds for all tight $x$-$y$-separators of order $k$ in any connected graph $X$. We choose a path $\pi$ from $x$ to $y$ and assume that there are infinitely many tight $x$-$y$-separators of order $k+1$, $k+1\ge 2$. Then there is a vertex $z\in\pi\setminus\{x,y\}$ which is contained in infinitely many of these separators. If $S_1$ and $S_2$ are such $x$-$y$-separators of order $k+1$ then $S_1\setminus\{z\}$ and $S_2\setminus\{z\}$ are distinct tight $x$-$y$-separators of order $k$ in $X\setminus \{z\}$. Hence there are infinitely many distinct tight $x$-$y$-separators of order $k$ in $X\setminus \{z\}$ contradicting the induction hypothesis.
\end{proof}
\begin{lemm}\label{lemm:finite}
A thin pre-cut is nested with all but finitely many thin pre-cuts.
\end{lemm}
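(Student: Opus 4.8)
The plan is to fix a minimal pre-cut $E$ and show that the minimal pre-cuts that fail to be nested with $E$ all ``use up'' one of finitely many separators, each of which can occur only finitely often. First I would reduce to counting minimal cuts $F$ (rather than pre-cuts): since $F$ is a pre-cut exactly when $F$ or $F^*$ is a cut, and nestedness of $E$ with $F$ is the same as nestedness with $F^*$, it suffices to bound the minimal cuts not nested with $E$, up to the two-to-one identification $F \leftrightarrow F^*$. By Theorem~\ref{theo:connectedness} every minimal cut is an $A$-cut or a $B$-cut, and an $A$-cut is by definition nested with every minimal cut; so only $B$-cuts can fail to be nested with $E$, and moreover $E$ itself must then be a $B$-cut. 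Replacing $X$ by $\hat X$ as in Theorem~\ref{theo:hat} we may assume there are no slices, so $E^*$ and every relevant $F^*$ are themselves cuts and nestedness is the literal relation ``some corner is empty with empty adjacent links''.

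Next I would analyze a single non-nested pair. If $F$ is a minimal cut not nested with $E$, then by case~(i) of Lemma~\ref{lemm:all_links} all four corners of $E,F$ are large and all four links are non-empty; by Lemma~\ref{lemm:intersection} the separators $N(E\cap F)$ and $N(E^*\cap F^*)$ (and likewise the other diagonal pair) are minimal, i.e. have exactly $\kappa$ elements, and $NE\cap NF = N(E\cap F)\cap N(E^*\cap F^*)$. In particular $NE\cap NF$ is non-empty: indeed from the computation in Lemma~\ref{lemm:intersection}, $a=b$, $c=d$, and $m_1=m=|NE\cap NF|$, and if $m=0$ then $|N(E\cap F)|=a+d=a+c<a+m+c=\kappa$ unless $a=c=0$, which would force a link to be empty, contradicting case~(i). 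So every minimal cut $F$ not nested with $E$ meets $NE$ in a non-empty set. Since $NE$ is finite, by pigeonhole it is enough to fix a vertex $x\in NE$ and bound the minimal cuts $F$, not nested with $E$, with $x\in NF$.

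Now fix such an $x\in NE$. Pick any vertex $y\in E$ adjacent to $x$ and any vertex $z\in E^*$ adjacent to $x$; these exist because $x\in NE$ and (using (A1), there being no slices) $E^*$ is a cut adjacent to $x$ — more carefully, one chooses $y,z$ as neighbours of $x$ witnessing membership in a fixed large component on each side, using the structure from the proof of Theorem~\ref{theo:connectedness}. For each minimal cut $F$ not nested with $E$ and with $x\in NF$, I claim $NF$ separates $y$ from $z$ properly in the sense of the previous section: $y$ lies in the corner $E\cap F$ or $E\cap F^*$, $z$ lies in the opposite corner among $E^*\cap F^*,E^*\cap F$, all corners are large hence adjacent to $NF$, so $NF$ (or a minimal $\kappa$-separator inside it, but $|NF|=\kappa$ so $NF$ itself) is a minimal $\kappa$-separator separating $y$ and $z$ properly. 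By Lemma~\ref{lemm:finitely_many} there are only finitely many $\kappa$-separators separating the pair $y,z$ properly, hence only finitely many possible sets $NF$; and for each admissible separator $S=NF$ there are at most two cuts with that boundary that are relevant (the two large components of $VX\setminus S$), so only finitely many such $F$. Summing over the finitely many $x\in NE$ and accounting for the $F\leftrightarrow F^*$ identification gives finitely many non-nested minimal pre-cuts, as desired.

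The main obstacle, and the step I would write most carefully, is the choice of the auxiliary vertices $y$ and $z$ and the verification that $NF$ separates them \emph{properly} for \emph{every} non-nested $F$ through $x$ — that is, ensuring $y$ and $z$ land in genuinely opposite corners (not merely in $N(E\cap F)$ etc.) and that these corners are adjacent to all of $NF$. This is where case~(i) of Lemma~\ref{lemm:all_links}, the largeness of all four corners, and the $B$-cut structure from Theorem~\ref{theo:connectedness} must be combined; once the properness is nailed down, the finiteness is an immediate appeal to Lemma~\ref{lemm:finitely_many}.
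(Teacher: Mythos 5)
Your argument has a genuine gap, and it occurs exactly at the two places on which your pigeonhole rests. First, the anchor claim that every minimal cut $F$ not nested with $E$ satisfies $NE\cap NF\neq\emptyset$ is both incorrectly derived and false. The derivation fails because when $m=0$ the quantity $a+m+c$ is equal to $a+c$, so your inequality $|N(E\cap F)|=a+d=a+c<a+m+c=\kappa$ is not strict and yields no contradiction; indeed Lemma~\ref{lemm:intersection} only gives $a=b$, $c=d$, $m_1=m_3=m$, which is perfectly consistent with $m=0$. And the claim itself fails in the paper's own running example (Figure~\ref{fig:2block}): the two diagonal pairs of the central $4$-cycle are the boundaries of two crossing minimal cuts with $a=b=c=d=1$ and $m=0$, i.e.\ disjoint separators. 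So there is in general no vertex $x\in NE\cap NF$ over which to pigeonhole. Second, even when such an $x$ exists, your auxiliary pair $y,z$ is chosen once and for all from the neighbours of $x$ (depending only on $x$ and $E$), and nothing forces $NF$ to separate $y$ from $z$ properly for every non-nested $F$ through $x$: either vertex may lie in $NF$, or both may lie in $F$ (or both in $F^*$). You correctly identify this ``properness'' verification as the crux, but you do not supply it, and with $y,z$ fixed independently of $F$ it cannot be supplied.

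The repair is the paper's argument, which is simpler: take the separated pair inside $NE$ itself rather than introducing auxiliary vertices. If $E$ and $F$ are not nested, then by case (i) of Lemma~\ref{lemm:all_links} all four links are non-empty, so there are vertices $u\in F\cap NE$ and $v\in F^*\cap NE$; hence $NF$ separates two elements of the \emph{finite} set $NE$, and it does so properly as a minimal $\kappa$-vertex separator (the side containing $u$ is $F$, and $v$ lies in the unique large component of $F^*$ by the argument of Theorem~\ref{theo:connectedness}, whose boundary is all of $NF$ by minimality). There are only finitely many pairs of vertices of $NE$, Lemma~\ref{lemm:finitely_many} gives only finitely many separators properly separating each pair, and by Theorem~\ref{theo:connectedness} each such separator bounds at most two relevant cuts. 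Your preliminary reduction from pre-cuts to cuts and your final counting step agree with the paper; it is the intermediate localisation at a vertex of $NE\cap NF$ and the unproven proper separation of $y,z$ that break the proof.
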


\begin{proof}
Let $E$ and $F$ be  thin pre-cuts which are not nested. By Lemma~\ref{lemm:all_links}
all links are not empty.  Hence $NF$ is a tight $x$-$y$-separator for two elements $x$ and $y$ of $NE$. Thin separators all have the same finite cardinality and Lemma~\ref{lemm:finitely_many} says that there are only finitely many such separators $NF$. Theorem~\ref{theo:connectedness} implies that there are just two cuts whose boundary is $NE$. The statement of the lemma follows.
\end{proof}

\section{Optimally nested cuts}\label{sect:opti}

Let $\cc$ be a cut system of thin cuts.
Let $C$ be a cut and let $M(C)$ be the set of thin cuts which are not  nested with $C$. Set $\mu  (C) = |M(C)|$. It follows from Lemma~\ref{lemm:finite}  that $\mu (C)$ is finite. 
If $D$ is a cut and $C_i$ is a cut component of $D^*$ then we put $\mu (D^*) = \mu (C_i)$.
There is no ambiguity in doing this as
if there are distinct cut components $C_1, C_2$ of $D^*$ then $N(D^*) = N(C_1) = N(C_2)$, and by Theorem~\ref{theo:connectedness},  $D, C_1, C_2$ are all $A$-cuts. Thus $\mu(D^*)=\mu(D)=\mu (C_1) = \mu (C_2)= 0$. We conclude, if $D$ is a thin cut and $C$ is a cut component of $D^*$ then $\mu(D)$, $\mu(D^*)$, $\mu(C)$ are all well defined and $\mu(D)=\mu(D^*)=\mu(C)$.

If $A,B$ are opposite corners which contain cuts then, by Lemma~\ref{lemm:intersection}, we can define $\mu(A)$ as $\mu(C)$ for any cut component  $C$ of $A$.

\begin{lemm}\label{lemm:corners_equality}
Let $C$ and $D$ be thin cuts and suppose that $C\cap D$ and $C^*\cap D^*$ contain cuts. Then
\begin{enumerate}
\item[(i)]
\[\mu(C\cap D) + \mu (C^*\cap D^*) \le \mu (C) + \mu (D).\]
\item[(ii)] If moreover, $C$ and $D$ are not nested then
\[\mu(C\cap D) + \mu (C^*\cap D^*) + 2 \le \mu (C) + \mu (D).\]
\end{enumerate}
\end{lemm}

\begin{proof}
If a thin cut $E$ is in $M(C^*\cap D^*)\cap M(C\cap D)$ then, by Lemma~\ref{lemm:not_nested_corner}~(i), $E$ is in $M(C)$ and in $M(D)$. Hence if $E$ is counted in twice on the left side of (i) then it is also counted twice on the right.

If $E$ is in $M(C\cap D)\setminus M(C^*\cap D^*)$ or in $M(C^*\cap D^*)\setminus M(C\cap D)$, that is $E$ is counted exactly once on the left side of (i), then, by Lemma~\ref{lemm:not_nested_corner}~(ii), $E$ is in $M(C)$ or in $M(D)$. Hence $E$ is counted at least once on the right side of (i) which establishes equation (i). If $C,D$ are not nested then $C$ and $D$ are counted on the right side, but not on the left side, by Lemma~\ref{lemm:all_links}. This implies (ii).
\end{proof}

Set $\mu_{\min} =\min\{\mu (C)\mid C \mbox{\ is a thin cut.}\}$. This minimum exists, because the values $\mu (C)$ are all finite.
\begin {defi} \label {optimal}  A thin cut $C$ with $\mu (C)=\mu_{\min}$ is called \emph{optimally nested}.
\end{defi}
 Every non-empty cut system contains an optimally nested thin cut.

\begin{theo}\label{theo:optimally}
Every optimally nested cut is nested with all other optimally nested cuts. Optimally nested cuts form a cut system.
\end{theo}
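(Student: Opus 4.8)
The plan is to prove both statements at once by combining the strict-descent inequality of Lemma~\ref{lemm:corners_equality} with the optimality of $m$. Suppose $E$ and $F$ are optimally nested minimal cuts that are \emph{not} nested. By Theorem~\ref{theo:connectedness} they are $B$-cuts (they are not $A$-cuts since they fail to be nested with each other), and by case (i) of Lemma~\ref{lemm:all_links} all four links are non-empty and there are two large opposite corners. Passing to $\hat X$ if necessary (using Theorem~\ref{theo:hat}), we may assume there are no slices, so $E^*$ and $F^*$ are themselves cuts. After possibly relabeling, say $E\cap F$ and $E^*\cap F^*$ are the two large opposite corners. By Lemma~\ref{lemm:intersection} their boundaries are minimal separators, so their large components are minimal cuts in $\cc$; pick such cut components $C_1\subset E\cap F$ and $C_2\subset E^*\cap F^*$ (these are the components referred to in Axiom (A3)). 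Since $m(C_1)+m(C_2)$ equals $m(E\cap F)+m(E^*\cap F^*)$ up to the slice/component identification made in the definition of $m(E^*)$, Lemma~\ref{lemm:corners_equality} gives
\[
m(C_1)+m(C_2) \;<\; m(E)+m(F) \;=\; 2m.
\]
Hence $\min\{m(C_1),m(C_2)\} < m$, contradicting the definition of $m$ as the minimum of $m(\cdot)$ over all minimal cuts. Therefore any two optimally nested minimal cuts are nested, which is the first assertion.

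For the second assertion I would verify that the set $\ce$ of optimally nested minimal cuts satisfies (A1), (A2), (A3). Axioms (A1) and (A2) for $\ce$: given $E\in\ce$, by (A1)/(A2) for the cut system of minimal cuts, $E^*$ contains a minimal cut, and each large component $C$ of $E^*$ is a minimal cut with $m(C)=0$ (it is an $A$-cut, being nested with everything as shown in the discussion before Lemma~\ref{lemm:corners_equality}); since $0\le m$ and $m$ is the minimum, in fact $m=0$ would force all optimally nested cuts to be $A$-cuts — but more carefully, $C$ satisfies $m(C)=0=\min$, wait: we must be cautious, because $m$ need not be $0$. The correct observation is: if $E$ is an $A$-cut then $m(E)=0$, so $m=0$ whenever $\ce$ contains any $A$-cut; and $\ce$ always contains an $A$-cut precisely when... — to avoid this subtlety I would instead argue directly that every component $C$ of $E^*$ which is a cut has $m(C)\le$ something small. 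Actually the cleanest route: having proved the first assertion, every $E\in\ce$ is nested with every other cut in $\ce$, so the large components of $E^*$ relative to $\ce$ behave well. I would show a large component $C$ of $E^*$ lies in $\ce$: $C$ is a minimal cut; any minimal cut $D$ not nested with $C$ would, by Lemma~\ref{lemm:not_nested_corner} and the $B$-cut structure, force a non-nested pair among optimally nested cuts or yield (via Lemma~\ref{lemm:corners_equality} applied to $C$ and a suitable corner) a cut of smaller $m$-value, again contradicting minimality — so $m(C)=m(E)=m$ only if... Here I would simply invoke that $m(E^*)=m(C)$ by the very definition given before Lemma~\ref{lemm:corners_equality}, hence $m(C)=m(E)=m$ and $C\in\ce$, giving (A1) and (A2).

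For (A3) in $\ce$: let $E,F\in\ce$. By the first assertion they are nested, so by Lemma~\ref{lemm:all_links} (cases (ii) or (iii)) there is an empty isolated corner and two large opposite corners; say $E\cap F$ and $E^*\cap F^*$ are large (the other normalization being symmetric). We must exhibit cut components of these two corners that lie in $\ce$. Since $E\subset^{\mathrm a}F$ or the analogous inclusion holds, in $\hat X$ one of the corners $E\cap F$, $E^*\cap F^*$ actually equals $E$ or $F^*$ (up to removing slices), and the other is a large component of $E^*$ or of $F$; by the argument just given for (A1)/(A2) such components lie in $\ce$. Thus (A3) holds and $\ce$ is a cut system, which is by construction nested (every pair is nested), completing the proof.

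The main obstacle I anticipate is the bookkeeping in the second assertion around the definition $m(E^*):=m(C)$ for a large component $C$ of $E^*$, and making sure that when we pass to components of corners (as Axiom (A3) demands) the $m$-values are exactly those appearing in Lemma~\ref{lemm:corners_equality}; in particular one must confirm that a large component of a corner of $E$ and $F$ that is itself a cut inherits $m$-value equal to $m(E\cap F)$ (resp. $m(E^*\cap F^*)$), using that all such components are $A$-cuts when the $m$-value is $0$ and otherwise are the unique large component, so there is no ambiguity. The first assertion, by contrast, is a clean one-line contradiction from Lemma~\ref{lemm:corners_equality} once the reduction to the no-slices, all-links-non-empty, two-large-opposite-corners situation is in place.
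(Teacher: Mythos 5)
Your proof of the first assertion is exactly the paper's: assume two optimally nested minimal cuts $E,F$ are not nested, use (A3) (after relabelling) to get large opposite corners $E\cap F$ and $E^*\cap F^*$, invoke Lemma~\ref{lemm:intersection} to see these corners have minimal cut components, and then apply Lemma~\ref{lemm:corners_equality} to get $m(E\cap F)+m(E^*\cap F^*)<m(E)+m(F)=2m$, contradicting the minimality of $m$. (The small bookkeeping point you flag --- that $m$ of a corner must be compared with $m$ of its cut component --- is glossed over in the paper in the same way, so this is not a divergence.) Up to here the proposal is correct and coincides with the paper's argument, which in fact stops at this point and offers no separate verification of the second sentence.

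The second half of your proposal, where you try to verify (A1)--(A3) for the family $\ce$ of optimally nested cuts, contains a genuine gap. The pivotal step is your inference ``$m(E^*)=m(C)$ by the very definition, hence $m(C)=m(E)=m$ and $C\in\ce$.'' That is a non sequitur: the displayed equation is merely the \emph{definition} of the symbol $m(E^*)$ for a large component $C$ of $E^*$; nothing in it, or in anything proved up to Theorem~\ref{theo:optimally}, asserts $m(E^*)=m(E)$. So the fact you actually need --- that cut components of $E^*$ (and hence of the large corners appearing in your (A3) check) are again optimally nested --- is left unproved, and your (A3) argument inherits the same gap. The statement is true, but it needs an argument of roughly the following shape, carried out in $\hat X$ (Theorem~\ref{theo:hat}): if $E^*$ is connected it is itself a cut $C$ with $NC=NE$ and $C^*=E$, so $C$ and $E$ have identical corners and links with every minimal cut $D$, whence $M(C)=M(E)$ and $m(C)=m(E)=m$; if $E^*$ is disconnected, then by the remark preceding Lemma~\ref{lemm:corners_equality} the cut components of $E^*$ are $A$-cuts with $m$-value $0$, forcing $m=0$, and again they lie in $\ce$. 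With that established, (A1)--(A3) follow along the lines you sketch; as written, however, your proposal does not establish that the optimally nested minimal cuts form a cut system, and the hedged, abandoned passages in that part of the write-up reflect that the decisive step was never carried out.
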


\begin{proof}
Optimally nested cuts are thin by definition. Theorem~\ref{theo:connectedness} says that for an optimally nested cut $C$ either $\mu_{\min}=0$ if $C$ is an A-cut or, if $C$ is a B-cut, then $C^*$ has precisely one cut component and $\mu(C^*)$ is well defined as $\mu(C)$.

Suppose there are optimally nested thin cuts $C$ and $D$ which are not  nested with each other.  By (A2) there will be opposite corners which contain cuts.   
By relabeling we can assume that these corners are $C\cap D$ and $C^*\cap D^*$ and by 
Lemma~\ref{lemm:intersection}, each of $C\cap D$ and $C^*\cap D^*$ has a component which is a  thin cut.
Now Lemma \ref{lemm:corners_equality} says that 
\[\mu (C\cap D) + \mu (C^*\cap D^*)< \mu (C) + \mu (D)  = 2\mu_{\min}. \]
Thus one of the summands on the left side of the inequality is less than $\mu_{\min}$, contradicting the minimality of $\mu_{\min}$.

If optimally nested cuts $C,D$ in a cut system $\cc$ have opposite corners which contain cuts then Axiom (A1) for $\cc$ says that any of their components which contains a cut is a cut. Lemma~\ref{lemm:corners_equality} (i) implies that these cuts are optimally nested. Hence Axiom (A1) is satisfied for the set of optimally nested cuts in $\cc$.
Similarly, Axiom (A2) for $\cc$ together with Lemma~\ref{lemm:corners_equality} (i) implies Axiom (A2) for the set of optimally nested cuts. Thus the optimally nested cuts in $\cc$ form a cut system.
\end{proof}

If $\mu_{\min} = 0$ then there are  thin 
cuts that are nested with every other thin cut.
It can happen though that $\mu_{\min} \not= 0$ as we show in an example.

 \begin{exam}\label{exam:2ended-graph}
 Let $X_n$ be the $2$-ended graph which is the induced subgraph of the integer lattice in the plane with $VX_n = \{ (i, j) \mid i \in \Z,\ j =0,1, 2, \dots ,n \}$.
Let $\ce$ be the cut system as in Example~\ref{exam:vertexends} restricted to thin cuts, that is cuts $C$ with $|NC|=n+1$. Then $C = \{ (i, j) \in VX_n \mid i> 0\} $ with $NC=\{(0,0),(0,1),\ldots (0,n)\}$ is an optimally nested cut.
If $n\geq 2$ then $D=\{(i,j)\mid j\le i\}$ with $ND = \{ (-1, 0), (0, 1),\ldots ,(n-1,n)\}$ is a thin cut which is not nested with $C$ and thus $\mu_{\min}>0$. That $C$ and $D$ are not nested can be verified by observing that they have no empty links because $(1,2)\in C\cap ND$, $(-1,0)\in C^*\cap ND$, $(0,0)\in D\cap NC$, $(0,2)\in D^*\cap NC$.
\end{exam}

Another graph for which $\mu_{\min} \not= 0$ is given in Figure~\ref{fig:4ends}.

\begin{prop}\label{prop:finitesequence}
Let $c$ be an integer. There is no strictly descending sequence of connected sets of vertices with non-empty intersection, whose boundaries have less than $c$ elements.
\end{prop}

\begin{proof}
Suppose there is such a sequence $(C_n)_{n\in\N}$. Let $I$ be the intersection $I = \bigcap \{ C_n | n =1,2,\dots \} $. If $u \in NI$, then
$u \in NC _n$ for all sufficiently large $n$.    Thus, since $|NC_n| < c$ we have $|NI| < c$ and $NI \subset NC_n$ for all sufficiently large $n$.
This must mean that $I$ is a oomponent of $C_n$ for these values of $n$.   But $C_n$ is connected and so the proposition follows.
\end{proof}

\begin{theo}\label{theo:mu=0}
Let $X$ be a graph with a thin cut system $\cc$ for which $\mu_{\min} \not= 0$.   Then $X$ is an infinite graph such that each cut contains an end.
\end{theo}

\begin{proof}
We know that for every $C\in \cc$ there is another cut $D$ with which it is not nested, and for every such pair $C, D$ there are
at least two corners which contain cuts.  Thus for any $C\in \cc$ we can form a sequence of distinct cuts $C = C_1, C_2, \dots  $, where $C_{i+1} \subset C_i$
and $C_{i+1}$ is a cut component of a corner of $C_i$ and another cut in $\cc$.  This sequence will determine an end of $X$ whose rays are all contained in $C$ from some index on, because the intersection of the elements of this sequence is empty by Proposition~\ref{prop:finitesequence}.
There will be another such sequence starting with $C^*$ and so $X$ has more than one end.
\end{proof}

We get a different proof of Theorem~\ref{theo:mu=0} in Section~\ref{sec:blocks}, see Corollary~\ref{coro:leaves}.

\section{Cuts and trees}\label{sec:cuts_trees}

A cut system is called \emph{nested} if its cuts are all pairwise nested. Theorem~\ref{theo:optimally} says that the optimally nested cuts in any given cut system form a nested cut system.
Our next goal is to construct a tree from a nested thin cut system $\cc$. For this we will need that for all $C_1,C_2$ in $\cc$ there are only finitely many $D$ in $\cc$ with
\begin{equation}\label{equa:finitely}
C_1\subset D\subset C_2.
\end{equation}
This is satisfied for systems where the cardinalities of the separators are bounded by some constant, see Proposition~\ref{prop:finitesequence}. In particular this holds for thin systems.   The construction below actually works more generally  for cut systems in which every cut $C$ has the same value for $|NC|$, which is
true for thin systems.    It may work for nested cut systems which do not satisfy this condition, but there are examples where it does not.

%If $\cc$ does not have property (\ref{equa:finitely}) then we will obtain a forest instead of a tree. 

We build a graph $T=(VT,ET)$ from a nested  thin cut system $\cc$ where $ET$ can be regarded as $\cc$ and $VT=\cb\cup\cs$, where $\cs$ is the set of separators.

We are going to define the elements of $\cb$  as equivalence classes of cuts. In the following Section~\ref{sec:blocks} we will see that we can equivalently define the elements of $\cb$ as blocks of vertices as discussed in the Examples~\ref{exam:1-connect} and \ref{exam:2-connect}. Before defining the equivalence relation on nested cut systems in general, let us consider an example.

For an edge $e$ in a digraph, let $o(e)$ denote the origin vertex and $t(e)$ the terminal vertex. Thus $e=(o(e),t(e))$.

\begin{exam}\label{exam:tree_blocks}
In Example~\ref{exam:1-connect} the cut system $\cc$ is the set of components which we obtain by removing any of the cut vertices. This is the special case of Example~\ref{exam:finitecuts} for $\kappa=1$. The edge set $ET$ of the tree $T$ can be regarded as the set $\cc$. If we consider $T$ as a directed tree then for $C \in \cc$ set $t( C) = NC=\{x\}$, where $x$ is a cut-point, and let $o(C)$ be the block
that is contained in $C\cup \{x\}$ so that $x\in o(C)$. Thus edges of $T$ point from vertices in $\cs$ towards vertices in $\cb$.

Two cuts are defined as equivalent if they are minimal with respect to containing a given block. Let $B$ be a block and  $\{x_i\mid i\in I\}$ be the set of cut point in $B$. Let $C_i$ be the component of $VX\setminus \{x_i\}$ which contains $B\setminus \{x_i\}$. Then $\{C_i\mid i\in I\}$ is an equivalence class which corresponds to the block $B$. This yields a one-to-one correspondence between the equivalence classes of cuts and the blocks.

We have seen further examples of such trees in Examples~\ref{exam:2-connect} and \ref{exam:farey}.
\end{exam}

Now in general, let  $\cc$ be a nested system of thin cuts.
By replacing $X$ by $\hat X$ if necessary, we may assume that there are no slices and hence nestedness is defined by inclusion as in Corollary~\ref{coro:nested_hat}.  %It is shown in \cite   or   \cite that a set of nested sets by inclusion forms the edge set of a directed tree $T = T(\cc)$.

We will show that the vertices can be partitioned into separators and {\it blocks}.
Let $\cs $ be the set of separators.      
We define $\cb $ as the set of equivalence classes for a particular equivalence relation $\sim $ on $\cc$.

\begin{lemm}\label{lemm:equivalence}
For $C,D$ in a nested  thin  cut system put $C\sim  D$ if  either
\begin{itemize}
\item[(i)]  $C=D$ or 
\item[(ii)] $C^* \varsubsetneq  D$ and if
$C^*\varsubsetneq E \subset  D$, for $E \in \cc$, then $E=D$.
\end{itemize}
Then $\sim $ is an equivalence relation on $\cc$.
\end{lemm}

\begin{proof}
Clearly $\sim $ is reflexive.

Suppose $C\sim D$ and $C\neq D$. If $D\not\sim C$ then there is a cut $E$ with $D^* \varsubsetneq  E\varsubsetneq C$, hence $C^*\varsubsetneq E^*\varsubsetneq D$ by Lemma~\ref{lemm:compl_incl}. There is an $x\in NC\cap E^*$. Let $E'$ be the cut component of $E^*$ which contains $x$. The cuts $E'$ and $C$ are nested and thus have an empty isolated corner. Since $NC\cap E'\ne\emptyset$, either $C\cap E'^*$ or $C^*\cap E'^*$ is this isolated corner.
If it is $C\cap E'^*$ then $C\subset E'$ and $E \varsubsetneq C$ implies $E \varsubsetneq E'$, which is impossible. Otherwise $C^*\varsubsetneq E'\varsubsetneq D$ in contradiction to $C\sim D$.

Next we show that $\sim$ is transitive. Suppose $E\sim  C$ and $C\sim  D$ and suppose that $C,D,E$ are distinct elements of $\cc$, then  $E^* \varsubsetneq  C$ and  $C^*
 \varsubsetneq D$. Since $D,E$ are nested, there are four possibilities. 
 \begin {itemize} 
 \item[{\it Case 1.}] If  $E \subset  D^*$ then $C^* \varsubsetneq  E$ implies $C^* \varsubsetneq D^*$, contradicting $D^* \varsubsetneq  C$.
 
 \item [{\it Case 2.}] If  $E\subset D$ then $C^*\subsetneq E\subset D$ and hence $D=E$ (by the second statement in~(ii)), which we have excluded. 
\item [{\it Case 3.}] If  $E^* \subset D^*$ then $D\subset E$ which is similar to {\it Case 2}.
\item [{\it Case 4.}]  $E^* \subset D$ then either $E\sim D$ or there is an $A\in\cc$ such that
\[E^* \varsubsetneq A\varsubsetneq  D.\]
We have again four cases, because  $A$ and $C$ are nested. 
\item [{\it Case 4.1.}] If $C \subset A$ then $C\varsubsetneq D$ contradicting $C^*\subset  D$. 
\item [{\it Case 4.2.}]
If $C^*\subset  A$ then $C^*\subset  A \varsubsetneq D$ and $C^*=A$,  because $C\sim D$. But then $E^*\varsubsetneq C^*$ contradicting $E^*\varsubsetneq C$.
\item [{\it Case 4.3.}] If  $C \subset  A^*$ then $A\subset C^*\subset E$ contradicting $E^*\subset A$.
\item [{\it Case 4.4.}]  If  $C^*\subset A^*$ then $E^*\varsubsetneq A\subset C$ and so $A=C$.  This  implies $C\varsubsetneq D$, contradicting $D^*\varsubsetneq C$.
\end {itemize}
\end{proof}

We obtain a directed graph $T=T(\cc)$
\[VT\ =\cs \cup\cb \quad\mbox{and}\quad ET=\cc\]
where  $\cs$ is the set of separators and  $\cb = \cc /\!\sim$.
Here  $o(C) =NC$ for  $C\in\cc$  and $t(C)=[C]$, where $[C]$ denotes the $\sim$-class which contains $C$. Clearly $T$ is a bipartite graph.    Each vertex in $\cb$ has every  adjacent edge pointing
towards it and each  vertex in $\cs$ has every adjacent directed edge  pointing away from it.  Any path in $T$ will have edges with alternating orientations as one proceeds along it. 
The elements of $\cb$ are sometimes called \emph{black vertices} and the ones in $\cs$ \emph{white vertices} and we draw them black and white, respectively, see Figures~\ref{fig:1block} to \ref{fig:Farey}, \ref{fig:4Exam}, \ref{fig:9Exam} to \ref{fig:dragon}, and \ref{fig:Tutte3}.

In fact $T$ is an oriented tree.    Indeed  if $E, F, G$ are distinct cuts such that $E\sim F$ and $NF =NG$,  so that $E, F$ are adjacent edges and 
$F, G$ are also adjacent edges, then $E^* \subsetneq G^*$, since we know that $E^* \subseteq F$ and $NF = NG$ means that $F$ is a component of $G^*$.
Since $E^* \subsetneq G^*,  G \subsetneq E$, it follows that in any path in $T$ the set of alternate edges correspond to cuts that are properly
ordered by inclusion.   Thus  the underlying graph of $T$ has no loops and so $T$ is an oriented tree.

We have proved the following.

\begin{theo}\label{theo:tree}

Let  $\cc$ be a nested   thin  cut  system.   Then $T=T(\cc)$ is an oriented  tree.
\end{theo}

In the following example, illustrated in Figure~\ref{fig:4Exam}, we see a cut system as in Example~\ref{exam:vertexends} with a non-empty slice similar to Examples~\ref{exam:3fan}, \ref{exam:3fanB} and \ref{exam:3fan_detail} and the corresponding structure tree.

\begin{exam}
Set $VX=\Z\cup \{o,i\}$ and
\[EX=\{\{x,x+1\}\mid x\in\Z\}\cup\{\{x,o\}\mid x\in\Z\}\cup\{\{o,i\}\}.\]
We consider the cut system of Example~\ref{exam:vertexends}.
The minimal number of vertices needed to separate the two ends of $X$ is $2$. The (connected) thin cuts are of the form $C^+_k=\{k+1,k+2,\ldots\}$ or $C^-_k=\{k-1,k-2,\ldots\}$, where $NC^+_k=NC^-_k=\{k,o\}$. The group of automorphisms acts transitively on the cut system $\cc=\{C^+_k,C^-_k\mid k\in\Z\}$.  If $l> r$ then $C^-_l$ and $C^+_r$ do not have an empty corner but they are nested because $\{i\}$ is their isolated corner.   The set $\{i\}$ is the only slice.   The graph $\hat X$ is obtained by deleting $i$.
The tree $T(\cc )$ is a double ray.

\end{exam}

%\vskip-5mm
\begin{figure}[htbp]
\centering
\begin{tikzpicture}[scale=0.5]
    \path (0,0) coordinate (p0);     
        \path (1,0) coordinate (p7);  
         \path (8,0) coordinate (p8);           
    \path (2,0) coordinate (p1);
    \path (3,0) coordinate (p2);
    \path (4,0) coordinate (p3);
    \path (5,0) coordinate (p4);
    \path (6,0) coordinate (p5);
  \path (7,0) coordinate (p6);
\path (4,2) coordinate (q);
    \path (4,4) coordinate (r);
    \draw (p1) -- (p5);
\draw [dashed] (p0) --(p1);
\draw [dashed] (p5) --(p8);
\draw [dashed] (p0)--(q)--(p8);
\draw [dashed] (q) -- (p7);
\draw [dashed] (q) -- (p6);
  \filldraw (p7) circle (3pt) ;
\filldraw (p0) circle (3pt) ;
 \filldraw (p8) circle (3pt) ;

  \filldraw (p1) circle (3pt) ;
  \filldraw (p2) circle (3pt);
  \filldraw (p3) circle (3pt);   
  \filldraw (p4) circle (3pt);
  \filldraw (p5) circle (3pt) ;
  \filldraw (q) circle (3pt);  
 \filldraw (r) circle (3pt);
\draw (p1) -- (q) -- (p2);
     \draw (p3) -- (q) -- (p4);
 \draw (p5) -- (q) ;
\draw (q) -- (r) ;
      \path (12,0) coordinate (p1);
    \path (13,0) coordinate (p2);
    \path (14,0) coordinate (p3);
    \path (15,0) coordinate (p4);
   \path (15.5,0) coordinate (p5);
   \path (9,0) coordinate (p6);
\path (11.5, 0) coordinate (r);
     \draw (p1) -- (p4);

     \path (12.5,0) coordinate (q1);
    \path (13.5,0) coordinate (q2);
    \path (14.5,0) coordinate (q3);
    \filldraw (p1) circle (3pt) ;
   
     \filldraw (p2) circle (3pt);
  \filldraw (p3) circle (3pt);  
   
 \filldraw (q1) [white]  circle (3pt) ;
   \draw (q1) circle (3pt) ;
   
     \filldraw (q2) [white] circle (3pt);
  \filldraw (q3) [white] circle (3pt);  
     \draw (q2) circle (3pt);
  \draw (q3) circle (3pt);  
\draw [dashed] (p1) -- (11, 0);
 \draw [dashed] (p4) -- (16, 0);
  \filldraw (p4) circle (3pt);
  \filldraw (q) circle (3pt);  
  \filldraw (p6) [white] circle (2pt);
   \filldraw (r) [white] circle (3pt);
  \draw (r) circle (3pt);  
  \filldraw (p5) [white] circle (3pt);
  \draw (p5) circle (3pt) ;
\path (5.8, 2) coordinate (q);
    \path (5.8, 4) coordinate (r);
\draw node at (3.5,2.4) {$o$};
\draw node at (3.5,4) {$i$};
   \draw (4.5,-1) coordinate (p1);
\draw (p1) node [left ] {$X$};

  \draw (14.5, -1) coordinate (p1);
\draw (p1) node [left] {$T(\cc )$ };

\end{tikzpicture}
\vskip-1mm\caption{Structure tree for a two-ended graph}\label{fig:4Exam}
\end{figure}
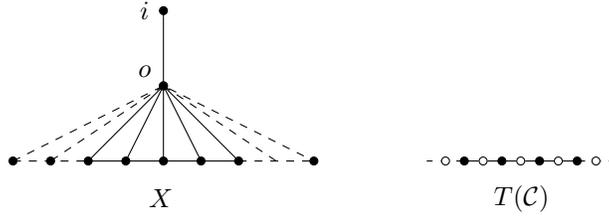

Let $\cn $ be the  nested system of the optimally nested thin cuts in a cut system as described in Section~\ref{sect:opti}.
If a group $G$ acts on $X$ and $\cc$ is invariant under $G$ then $G$ naturally acts on $T$.

\section{Blocks}\label{sec:blocks}

It is possible to define the set $\cb$ in a different - possibly better - way.
Let $\cc$ be a nested  thin cut system.  %For the moment we do not assume $\cc$ is nested.
A subset $Y$ of $VX$ is said to be \emph{$\cc$-inseparable} if for every $C\in \cc$  either  $Y \subset C\cup NC$ or $Y \subset C^*\cup NC$
but not both.
Every $\cc$-inseparable set is contained in a
maximal $\cc$-inseparable set.
Thus if one has an increasing sequence of $\cc$-inseparable sets $Y_1 \subset Y_2 \subset \dots $ and $Y = \bigcup  _n Y_n $,
then $Y$ is $\cc$-inseparable, since if $C \in \cc$ and $n$ is a positive integer such that $Y_n$ 
has more than $|NC|$ elements then for all $m\geq n$ either $Y_m\subset  C\cup NC$ or $Y_m\subset  C^*\cup NC$ and the same is true for $Y$.
\begin {defi} \label {block} A maximal $\cc$-inseparable set which is disjoint with all slices is called a \emph{$\cc$-block}. \end {defi}
Note that in this section we will use this definition of block or $\cc $-block, while a block as in the previous section is an element of $\cc /{\sim} $.
We can regard a block as a subgraph of $X$ if we say that an edge is in a block if both its vertices are.
We have seen that if $X$ is a connected graph with a cut system $\cc $, then we can replace $X$ by $\hat X$ and $\cc $ by $\hat \cc $ in which there are no slices.   The $\cc$-blocks become $\hat \cc$-blocks  in $\hat X$.    In the rest of this section (apart from Example~\ref{exam:3fan_detail}) we will assume that there are no slices in $X$, i.e.
if $A \in \cc $ then each component of $X\setminus NA$ is a cut.
% We will also assume that $\cc$ is a nested system.
 
For $b\in\cc/{\sim}$ we define
\[B(b)=\bigcap_{C\in b}(C\cup NC).\]

\begin{lemm}\label{lemm:blocks}
An edge of $X$ which is not contained in any separator is contained in exactly one block.
The set of all blocks is $\{B(b)\mid b\in\cc/{\sim}\}$.
If $b\in\cc/{\sim}$ then
\begin{equation}\label{equa:inclu}\bigcup_{C\in b}NC\subset B(b).
\end{equation}
If $C\in b$ then $B(b)$ is the only block $B$ such that $NC\subset B\subset C\cup NC$. Moreover, $B(b)\setminus NC\ne\emptyset$. 
\end{lemm}

\begin{proof}
The first statement follows from the fact that the vertices of an edge cannot be separated by a separator.

Let $B$ be a block. We show that all cuts $C$   which are minimal with respect to  $B\subset C\cup NC$ are equivalent.  There are minimal such cuts by Proposition
~\ref {prop:finitesequence}.  If $C\not= D$ are cuts and $B \subset C\cup NC, B\subset D\cup ND$,  then either $C^*  \subset D$ or $D^* \subset C$ since
$C, D$ are nested.   If now $E$ is a cut and $C^*\varsupsetneq  E \subset D$, then either $B \subset E\cup NE$ or $B \subset E^*\cup NE$.
If $B \subset E\cup NE$ then $D = E$ if $D$ is minimal with respect to $B \subset D\cup ND$, and if $B\subset E^*\cup NE$,  then $B \subset E^*\cup NE \subset  C\cup NC$.  This latter inclusion cannot happen if $C$ is minimal with respect to $B\subset C\cup NC$ and $E \not= C$.
 If $b$ is the corresponding equivalence class then $B=B(b)$. Thus every block $B$ occurs as block $B(b)$ of some equivalence class $b$.

Let $C\in b$ and suppose two vertices $x,y\in B(b)$ are separated by some separator $S$. Then $S\subset C\cup NC$, because $C$ is nested with the cuts $D$ for which $ND=S$. One of these cuts $D$ contains $C^*$. Let $D'$ be the cut such that $C^*\subset D'\subset D$ and $D'\sim C$. Either $x$ or $y$ is not in $D'$ and hence not in $B(b)$, a contradiction. Thus $B(b)$ is inseparable. On the other hand, any vertex which is not in $B(b)$ can be separated by some separator from $B(b)$. Hence $B(b)$ is a block.

If $D\sim C$ then $C^*\subset D$, $C^*\cup NC\subset D\cup ND$. This implies $NC\subset B$ and (\ref{equa:inclu}). The inclusion $B\subset C\cup NC$ follows from the definition of $B(b)$.

Suppose there is a block $B'$, $B'\ne B$, such that $NC\subset B'\subset C\cup NC$. There is a separator which separates $B$ from $B'$. Since both $B$ and $B'$ contain $NC$, this separator is $NC$. But then one of these two blocks has to be in $C\cup NC$ and the other in $C^*\cup NC$, a contradiction.

If $b$ contains two different cuts $C,D$, then $NC\ne ND$, by the definition of $\sim$, and (\ref{equa:inclu}) implies $B(b)\setminus NC\ne\emptyset$. If $b$ only contains one cut $C$ then $B(b)=C\cup NC$ and again $B(b)\setminus NC\ne\emptyset$.
\end{proof}

\begin{coro}
In a nested cut system, every block has at least $\kappa+1$ elements.
\end{coro}

We saw that $b\mapsto B(b)$ defines a bijection from $\cc/{\sim}$ to the set of all blocks. If there are no ambiguities from the context we may now consider the set $\cb$ as the set of $\cc$-blocks instead of $\cc/{\sim}$. For $C\in\cc$ let $b(C)$ denote the $\sim$-class which contains $C$. In the tree $T(\cc)$ we now have $t(C)=B(b(C))$. A separator $S$ and a block $B$ are adjacent as vertices in $T(\cc)$ if and only if $S\subset B$, see Lemma~\ref{lemm:blocks}. This construction of $T(\cc)$ with blocks implies the following.

\begin{coro}\label{coro:leaves}
Let $\cc$ be a thin nested cut system. Every leaf (i.e.\ vertex with degree one) of $T(\cc)$ is a block $B$ which is adjacent in $T(\cc)$ to a separator $S$ such that $B\setminus S$ is a cut.

%Let $\cc$ be an arbitrary thin system and $\cn$ its nested subsystem of optimally nested cuts. If $T(\cn)$ has a leaf then $\mu_{\min}=0$ in $\cc$.
\end{coro}

%\begin{proof}
%A cut in $\cc$ which is a block and a leaf in $T(\cn)$ is nested with all other cuts in $\cc$.
%\end{proof}

%Corollary~\ref{coro:leaves} together with Theorem~\ref{theo:optimally} yields another proof of Theorem~\ref{theo:mu=0}.
 
The induced  subgraph on a block $B$ is not usually connected, as can be seen from examples in Section 9.    However one obtains
a connected graph $X_B$ on $B$ by adding in {\it ideal} edges, which are edges joining each pair of vertices which lie in the same separator.

\begin {prop}
The graph $X_B$ is connected.
\end{prop}
\begin {proof}
The proof that $X_B$  is connected is similar to the proof that $\hat X$ is connected in  Theorem~\ref{theo:hat}.  

Let $x, y \in B $.  Since $X$ is connected, there is a path 
$x = x_1, x_2, \dots , x_n = y$ from $x$ to $y$ in $X$.  Let
$x = x_1 = y_1, y_2, \dots , y_m =x_n =y$ be the subsequence of $x_1,x_2, \dots , x_n$ obtained by deleting the vertices that do not lie in $B$.
We will show that this subsequence 
is a path in $X_B$ from $x$ to $y$.  If $C\sim D$ are in the equivalence class $[B]$, then $C^*\cap D^* = \emptyset$ since $C^*\subset D$.   Thus
the sets $C^*$ as $C$ ranges over $[B]$ partition the set  $VX \setminus B$.    
If $y_i = x_j$  and $y_{i+1} = x_{j+1}$,   then $y_i$ is adjacent to $y_{i+1}$ in both $X$ and $X_B$.

If $y_i = x_j$ and $y_{i+1} = x_{j+r}$ where $r >1$, then $x_{j+1}, \dots , x_{j+r-1}$ all lie in the same cut  $C^*$. This is because adjacent vertices
cannot lie is distinct $C^*$.  This means that $y_i$ and $y_{i+1}$
are in $NC^*$ and so they are joined by an edge in $X_B$.
Thus $X_B$ is connected.
\end {proof}

As mentioned in the introduction, we can add even more ideal edges and  join any two vertices by an (ideal) edge if they are not separated by any cut in our given nested cut system. The resulting cut system remains the same, but then every block spans a complete subgraph. This is illustrated in Figure~\ref {fig:2block}b.

\begin{exam}\label{exam:3fan_detail}
Consider the graph $X_n$ from Example~\ref{exam:3fan}. There are $n-2$ separators
$\{2,a,b\}, \{3,a,b\},\ldots ,\{n-1,a,b\}$. The set of $\cc_n$-blocks is
\[\cb_n=\{\{1,2,a,b\},\{2,3,a,b\},\ldots,\{n-1,n,a,b\}\}.\]
Instead of the blocks we could consider the $\sim$-classes
\[\cb_n\ \,=\ \cc_n/\sim\ =\
\{\{C_2\},\{D_2,C_3\},\{D_3,C_4\},\ldots,\{D_{n-2},C_{n-1}\},\{D_{n-1}\}\}.\]
The corresponding tree $T(\cc_n)$ is an alternating path of length $2n-2$ with $n-2$ white and $n-1$ black vertices. The system $\cc_n$ is thin.

For $l< k$ the intersection $C_k^*\cap D_l^*=\{c,d\}$ is a non-empty isolated corner, it is the only slice of $\cc_n$. For other pairs of cuts we have an empty isolated corner. Hence $\cc_n$ is nested. The graph $\hat{X}_n$ is obtained by deleting the vertices $c,d$.
\end{exam}

\section{Structure trees and nested cut-systems}

We have shown the following main theorem.

\begin{theo} \label{theo:main}
Let $X$ be a connected graph with a cut system $\cc$ invariant under a   group $G$  of automorphisms of $X$.   The set $\cn$ of optimally nested thin cuts in $\cc$ forms
the edge set of a $G$-tree  $T(\cc)$.
\end{theo} 
Let us now investigate further  properties of structure trees.
Let $G$ be a group acting on a connected graph $X$ and let $\cn$ be a nested system of thin cuts,
invariant under $G$. The action of $G$ on $\cn$ induces an action on $T=T(\cn)$ and hence $T$ is a $G$-tree.

We now show how  a ray $R$ in $X$ determines either a unique end or a unique vertex of $T$.   An end in $T$ will correspond to a unique ray
in $T$ starting at a fixed vertex.   We have seen that $T$ is oriented so that the orientations alternate on any path.
We can  choose either the even or odd edges of the ray so that they form a strictly decreasing sequence
of cuts $E_1 \supset E_2 \supset \dots $, which will have empty intersection in  $\hat X$ or intersection  which is a subset of a slice in $X$.

Let $R$ be a ray in $X$.    If there is a ray in $T$ as above for which for each $i$,  $R$ is eventually in $E_i$, then the end of $T$ with this property
is uniquely determined.      There may be no such  end of $T$. It will always be the case that for each cut $E$ in $\cn $ the ray is eventually 
in $E$ or in $E^*$.     The edges of $T$ which eventually contain $R$ will either `point at' an end of $T$ or to a  vertex,
and this end or block is uniquely determined.
This vertex is either a separator, in which case the ray lies in a slice from some index on, or the vertex is a block $B$. In the latter case
the ray contains infinitely many vertices of $B$.
We then say that the ray $R $ \emph{belongs to} the corresponding vertex or end of $T$.
It is clear  that the following holds.
 
 \begin{prop}
If  two rays belong to the same end of $T$ then  they belong to the same end of $X$.  If two rays belong to the same end of $X$, then they either
belong to the same end of $T$ or to the same vertex.
\end{prop}
 If $k\geq \kappa $, and if  $Y$ is a $k$-inseparable set, then there is a unique block $B$ that contains $Y$.  
 However it may be the case that a block contains no $k$-inseparable set.   Also a block may properly contain a maximal $k$-inseparable set. 
 See Figure \ref {fig:4blocks}.  The right hand block (with $6$ marked vertices)  is not $3$-inseparable, and the shaded block does not contain a $3$-inseparable set.

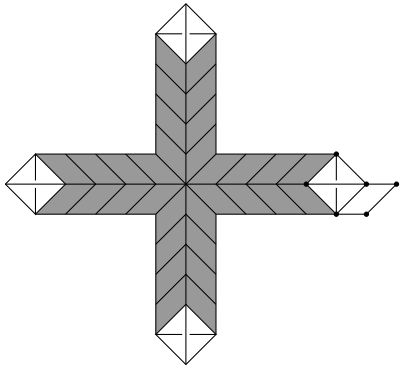
\begin{figure}[htbp]
\centering
\begin{tikzpicture}[scale=0.4]
 \fill[black!40] (0,6)--(1,5)--(0,4)--(4,4)--(4,0)--(5,1)--(6,0)--(6,4)--(10,4)--(9,5)--(10,6)--(6,6)--(6,10)--(5,9)--(4,10)--(4,6)--(0,6) ;
\draw (-1,5) --(0,4) --(0, 6)--(-1,5)  ;
\filldraw [white]  (0,5) circle (3pt);
\draw (11,5) --(10,4) --(10, 6)--(11,5)  ;

\filldraw [white]  (10,5) circle (3pt);
\draw  (11,5) --(12,5) -- (11,4) --(10,4) ;
\draw (5,11) --(4,10) --(6,10)--(5,11)  ;
\filldraw [white]  (5,10) circle (3pt);
\draw (5,-1) --(4,0) --(6,0)--(5,-1)  ;
\filldraw [white]  (5,0) circle (3pt);
 \filldraw (11,5) circle (2pt);
 \filldraw(12,5) circle (2pt);
 \filldraw(10,4) circle (2pt);
\filldraw(11,4) circle (2pt);
\filldraw(9,5) circle (2pt);
\filldraw(10,6) circle (2pt);

\draw (-1,5) -- (11,5) ;
\draw (5,11) -- (5,-1) ;
\draw (0,6) -- (4,6) ;
\draw (6,6) -- (10,6) ;
\draw (4,10) -- (4,6) ;
\draw (4,0) -- (4,4) ;
\draw (0,4) -- (4,4) ;
\draw (6,4) -- (10,4) ;
\draw (6,10) -- (6,6) ;
\draw (6,0) -- (6,4) ;
\draw (4,4) --(6,6) ;
\draw (6,4)--(4,6);
\draw (3,6)--(4,5) --(3,4) ;
\draw (2,6)--(3,5) --(2,4) ;
\draw (1,6)--(2,5) --(1,4) ;
\draw (0,6)--(1,5) --(0,4) ;
\draw (10,6)--(9,5) --(10,4) ;
\draw (9,6)--(8,5) --(9,4) ;
\draw (8,6)--(7,5) --(8,4) ;
\draw (7,6)--(6,5) --(7,4) ;
\draw (4,10)--(5,9) --(6,10) ;
\draw (4,9)--(5,8) --(6,9) ;
\draw (4,8)--(5,7) --(6,8) ;
\draw (4,7)--(5,6) --(6,7) ;
\draw (4,0)--(5,1) --(6,0) ;
\draw (4,1)--(5,2) --(6,1) ;
\draw (4,2)--(5,3) --(6,2) ;
\draw (4,3)--(5,4) --(6,3) ;
\end{tikzpicture}
\caption{Graph with a block containing no $3$-inseparable set}\label{fig:4blocks}\vskip-2mm
\end{figure}

In Example ~\ref{exam:finitecuts} for cut system $\ci $ take $\Omega $ to be the set of $k$-inseparable sets, and in   Example~\ref{exam:vertexends} for cut system 
$\ce $, take
$\Omega $ to be the set of vertex ends.   The thin subsystem of $\ci $ and the whole cut system $\ce $ are both examples of a thin cut system $\cc $
in which a cut $A$ is  in $\cc $ if it is thin (i.e $|NA| = \kappa $) and it separates a pair of elements of $\Omega $.     One 
could, in fact, get a cut system in which $\Omega $ is the union of  the set of vertex ends and the set of $k$-inseparable sets.

\begin {defi} \label {omega} An $\Omega $-cut system is a thin cut system $\cc$ in which a cut $A$ is in $\cc $ if it separates a pair of elements of $\Omega $.   Here $\Omega $ is either the set of vertex ends or the set of $\kappa $-inseparable sets or the union of these two sets.

\end {defi}
We will strengthen our main theorem for such a cut system to show that there is a uniquely determined nested subsystem (and therefore a structure tree) that separates any pair of elements of $\Omega $ if they are separated by a cut in $\cc$.

Note that this will be  a genuine strengthening as the following example shows.

\begin{exam} In this example there are distinct maximal  $\kappa$-inseparable sets are not separated by optimally nested cuts.   In Figure~\ref{fig:opt}  there are four $4$-inseparable sets (each of which is the vertex set of a complete
 subgraph on $5$ vertices).   There are two thin separators, shown in black, that correspond to optimally nested cuts $C$ with $\mu (C)=0$.
 The two central $4$-inseparable sets $Y_1, Y_2$ are not separated by any cut for which $\mu (C)=0$.   The  two separators, shown in grey,
 which separate $Y_1, Y_2$ correspond to cuts $C$ with $\mu (C) = 16$.   Any cut $D$ which separates $Y_1, Y_2$ has $\mu (D) \geq 16$.
 
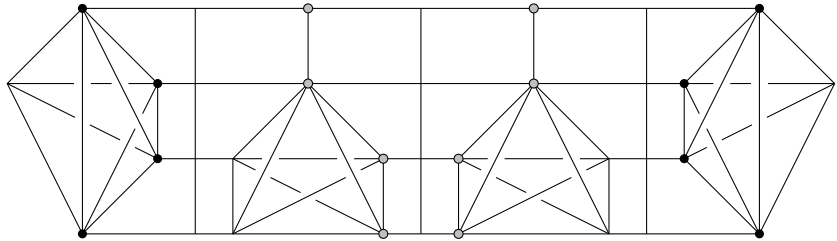
\begin{figure}[htbp]
\centering
\begin{tikzpicture}
\draw (5,6) -- (5,5) ;
\draw (2,6) --(2,5) ;
 \path (-1,6) coordinate (A);
    \path (-2,5) coordinate (B);
    \path (0,5) coordinate (C);
    \path (0,4) coordinate (D);    
    \path (-1,3) coordinate (E);    
    \draw (B)--(D) ;    
     \path (intersection of A--E and B--C) coordinate (Y);
      \path (intersection of A--E and B--D) coordinate (Z);
 \path (intersection of C--E and B--D) coordinate (X);
  \path (intersection of A--D and B--C) coordinate (W);
 \path (intersection of A--D and C--E) coordinate (U);

  \draw (B)--(C) ;
   
    \draw (A)--(B)--(E)--(D)--(C)--(A);
   \filldraw[white] (X) circle (3pt);
 \filldraw[white] (Y) circle (3pt);
 \filldraw[white] (Z) circle (3pt);
    \draw (E)--(C);

 \filldraw[white] (U) circle (3pt);
 \filldraw[white] (W) circle (3pt);
 
\draw (A)--(D) ;
\draw (A)--(E) ;

 \path (8,6) coordinate (A);
    \path (9,5) coordinate (B);
    \path (7,5) coordinate (C);
    \path (7,4) coordinate (D);    
    \path (8,3) coordinate (E);    
 
    \draw (B)--(D) ;
     \path (intersection of A--E and B--C) coordinate (Y);
      \path (intersection of A--E and B--D) coordinate (Z);
 \path (intersection of C--E and B--D) coordinate (X);
  \path (intersection of A--D and B--C) coordinate (W);
 \path (intersection of A--D and C--E) coordinate (U);

  \draw (B)--(C) ;
  
    \draw (A)--(B)--(E)--(D)--(C)--(A);
   \filldraw[white] (X) circle (3pt);
 \filldraw[white] (Y) circle (3pt);
 \filldraw[white] (Z) circle (3pt);
    \draw (E)--(C);

 \filldraw[white] (U) circle (3pt);
 \filldraw[white] (W) circle (3pt);
   \draw (A)--(D) ;

\draw (A)--(E) ;

 \path (2,5) coordinate (A);
    \path (1,4) coordinate (B);
    \path (3,4) coordinate (C);
    \path (3,3) coordinate (D);    
    \path (1,3) coordinate (E);

   \draw (B)--(D) ;
     \path (intersection of A--E and B--C) coordinate (Y);
      \path (intersection of A--E and B--D) coordinate (Z);
 \path (intersection of C--E and B--D) coordinate (X);
  \path (intersection of A--D and B--C) coordinate (W);
 \path (intersection of A--D and C--E) coordinate (U);

  \draw (B)--(C) ;
    \draw (A)--(B)--(E)--(D)--(C)--(A);
   \filldraw[white] (X) circle (3pt);
 \filldraw[white] (Y) circle (3pt);
 \filldraw[white] (Z) circle (3pt);
    \draw (E)--(C);

 \filldraw[white] (U) circle (3pt);
 \filldraw[white] (W) circle (3pt);
 \draw (A)--(D) ;
\draw (A)--(E) ;
\path (5,5) coordinate (A);
    \path (4,4) coordinate (B);
    \path (6,4) coordinate (C);
    \path (6,3) coordinate (D);    
    \path (4,3) coordinate (E);

   \draw (B)--(D) ;
     \path (intersection of A--E and B--C) coordinate (Y);
      \path (intersection of A--E and B--D) coordinate (Z);
 \path (intersection of C--E and B--D) coordinate (X);
  \path (intersection of A--D and B--C) coordinate (W);
 \path (intersection of A--D and C--E) coordinate (U);

  \draw (B)--(C) ;
    \draw (A)--(B)--(E)--(D)--(C)--(A);
   \filldraw[white] (X) circle (3pt);
 \filldraw[white] (Y) circle (3pt);
 \filldraw[white] (Z) circle (3pt);
    \draw (E)--(C);

 \filldraw[white] (U) circle (3pt);
 \filldraw[white] (W) circle (3pt);
 \draw (A)--(D) ;
\draw (A)--(E) ;

\draw (-1, 3) -- (1,3) ;
\draw (3, 3) -- (4,3) ;
\draw (6, 3) -- (8,3) ;

\draw (0,5) -- (7,5) ;
\draw (-1,6) -- (8,6) ;
\draw (6.5,6) -- (6.5,3) ;
\draw (0,4) -- (1,4) ;
\draw (3,4) -- (4,4) ;
\draw (7,4)--(6,4) ;

\draw (3.5,6)--(3.5,3) ;

\draw (.5,6)--(.5,3) ;

 \filldraw[black!100] (0,5) circle (1.5pt);
 \filldraw[black!100] (-1,6) circle (1.5pt);
 \filldraw[black!100] (0,4) circle (1.5pt);
 \filldraw[black!100] (-1,3) circle (1.5pt);
 \draw[black,fill=black!25] (5,5) circle (1.7pt);
 \draw[black,fill=black!25] (5,6) circle (1.7pt);
 \draw[black,fill=black!25] (4,4) circle (1.7pt);
\filldraw[black!100] (7,5) circle (1.5pt);
 \filldraw[black!100] (8,6) circle (1.5pt);
   \filldraw[black!100] (7,4) circle (1.5pt);
 \filldraw[black!100] (8,3) circle (1.5pt);
 \draw[black,fill=black!25] (2,5) circle (1.7pt);
 \draw[black,fill=black!25] (2,6) circle (1.7pt);
 \draw[black,fill=black!25] (3,4) circle (1.7pt);
  \draw[black,fill=black!25] (4,3) circle (1.7pt);
 \draw[black,fill=black!25] (3,3) circle (1.7pt);
\end{tikzpicture}

\caption{Graph in which $4$-inseparable sets are not separated by optimally nested cuts}\label{fig:opt}
\end{figure}
\end{exam}
\

\begin{exam}\label{exam:four_ended}
In Figure~\ref {fig:4ends} an example is given of a $4$-ended graph in which $\mu_{\min}=4$. The vertices of four 3-element thin separators are drawn fat, corresponding sets $C\cup NC$ for cuts $C$ are shown in light grey. The best way to work out $\mu (C)$ for a particular cut $C$ is to count the number $s$ of thin separators
 that have points in both $C$ and $C^*$ and then $\mu(C) = 2s$.  For this graph any two rays that lie in distinct ends are separated by 
 a cut with $\mu (C) =\mu_{\min}= 4$.  The central block for the cut system of optimally nested cuts is shown in dark grey.  
 
 It is possible to change the graph of Figure~\ref{fig:opt} so that it gives an example of a graph in which there are ends that are separated
 by a thin cut but which are not separated by an optimally nested thin cut.
 \end{exam}

\vskip-3mm
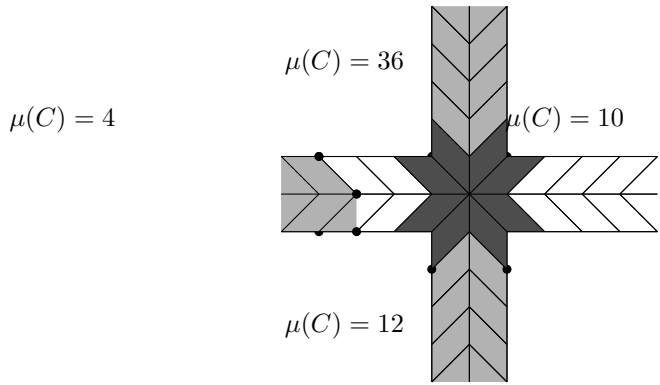
\begin{figure}[htbp]
\centering
\begin{tikzpicture}[scale=0.5]
\put (-90,0) {
\fill[black!30] (0,4) --(0,6)--(1,6) --(2,5)--(1,4)--(0,4) ;
\fill[black!30] (4,0) --(4,3)--(5,5) --(6,3)--(6,0)--(4,0) ;
\fill[black!30] (4,10) --(4,6)--(5,5) --(6,6)--(6,10)--(4,10) ;

\draw (0,5) -- (10,5) ;
\draw (5,10) -- (5,0) ;
\draw (0,6) -- (4,6) ;
\draw (6,6) -- (10,6) ;
\draw (4,10) -- (4,6) ;
\draw (4,0) -- (4,4) ;
\draw (0,4) -- (4,4) ;
\draw (6,4) -- (10,4) ;
\draw (6,10) -- (6,6) ;
\draw (6,0) -- (6,4) ;
\draw (4,4) --(6,6) ;
\draw (6,4)--(4,6);
\draw (3,6)--(4,5) --(3,4) ;
\draw (2,6)--(3,5) --(2,4) ;
\draw (1,6)--(2,5) --(1,4) ;
\draw (0,6)--(1,5) --(0,4) ;
\draw (10,6)--(9,5) --(10,4) ;
\draw (9,6)--(8,5) --(9,4) ;
\draw (8,6)--(7,5) --(8,4) ;
\draw (7,6)--(6,5) --(7,4) ;
\draw (4,10)--(5,9) --(6,10) ;
\draw (4,9)--(5,8) --(6,9) ;
\draw (4,8)--(5,7) --(6,8) ;
\draw (4,7)--(5,6) --(6,7) ;
\draw (4,0)--(5,1) --(6,0) ;
\draw (4,1)--(5,2) --(6,1) ;
\draw (4,2)--(5,3) --(6,2) ;
\draw (4,3)--(5,4) --(6,3) ;
 \filldraw (5,5) circle (3pt);
 \filldraw (6,3) circle (3pt);
 \filldraw (4,3) circle (3pt);
 \filldraw (2,5) circle (3pt);
 \filldraw (1,6) circle (3pt);
 \filldraw (1,4) circle (3pt);
 \filldraw (5,5) circle (3pt);
 \filldraw (4,6) circle (3pt);
 \filldraw (6,6) circle (3pt);
\draw (0.5, 7) node {$\mu(C) = 4$}  ;

\draw (8,8.5) node {$\mu(C) = 36$}  ;
\draw (8,1.5) node {$\mu(C) = 12$}  ;}
\put (100,0){
  \fill[black!70] (3,6)--(4,5)--(3,4)--(4,4)--(4,3)--(5,4)--(6,3)--(6,4)--(7,4)--(6,5)--(7,6)--(6,6)--(6,7)--(5,6)--(4,7)--(4,6)--(3,6) ;
\fill[black!30] (0,4) --(0,6)--(1,6) --(2,5)--(2,4)--(0,4) ;

\draw (0,5) -- (10,5) ;
\draw (5,10) -- (5,0) ;
\draw (0,6) -- (4,6) ;
\draw (6,6) -- (10,6) ;
\draw (4,10) -- (4,6) ;
\draw (4,0) -- (4,4) ;
\draw (0,4) -- (4,4) ;
\draw (6,4) -- (10,4) ;
\draw (6,10) -- (6,6) ;
\draw (6,0) -- (6,4) ;
\draw (4,4) --(6,6) ;
\draw (6,4)--(4,6);
\draw (3,6)--(4,5) --(3,4) ;
\draw (2,6)--(3,5) --(2,4) ;
\draw (1,6)--(2,5) --(1,4) ;
\draw (0,6)--(1,5) --(0,4) ;
\draw (10,6)--(9,5) --(10,4) ;
\draw (9,6)--(8,5) --(9,4) ;
\draw (8,6)--(7,5) --(8,4) ;
\draw (7,6)--(6,5) --(7,4) ;
\draw (4,10)--(5,9) --(6,10) ;
\draw (4,9)--(5,8) --(6,9) ;
\draw (4,8)--(5,7) --(6,8) ;
\draw (4,7)--(5,6) --(6,7) ;
\draw (4,0)--(5,1) --(6,0) ;
\draw (4,1)--(5,2) --(6,1) ;
\draw (4,2)--(5,3) --(6,2) ;
\draw (4,3)--(5,4) --(6,3) ;
  \filldraw (2,5) circle (3pt);
 \filldraw(1,6) circle (3pt);
 \filldraw(2,4) circle (3pt);
\draw (0.5, 7) node {$\mu(C) = 10$}  ;
}
   \end{tikzpicture}
\caption{Graph with 4 ends with $\mu_{\min}\not=0$}\label{fig:4ends}
\end{figure}\vskip-4mm\mbox{}
 
\begin{theo}\label{strong}    Let $X$ be a connected graph with automorphism group $G$, and let $\cc $ be an $\Omega $-cut system. There is a uniquely determined nested cut system $\cn$ invariant under $G$ which
is a subsystem of $\cc$ with the following property.
 If  $\omega _1, \omega _2 \in \Omega $ are separated by a cut in $\cc$ then they are separated by a 
cut in $\cn$.
\end{theo}

    \begin {proof}
As in Section 5, for $C \in \cc$ let $\mu (C)$ be the number of cuts in $\cc$ with which $C$ is not nested.

\begin {defi} \label {opti-wrt}Let $\cc (\omega _1, \omega _2)$ be the set of cuts in $\cc $ which separate $\omega _1, \omega _2$.  We say that $C \in \cc (\omega _1, \omega _2)$ is optimally nested
with respect to $\omega _1, \omega _2$ if $\mu (C)$ takes the smallest value among the elements of $\cc (\omega _1, \omega _2)$.
\end {defi}

Let $\cn $ be the set of cuts $C$  that are optimally nested with respect to some $\omega _1, \omega _2$.

We show that $\cn $ is a nested cut system.  This is an argument from \cite {DW}.   Suppose $C$ is optimally nested  with respect
to  $\omega _1, \omega _2$ and $D$ is optimally nested with respect to $\omega _3, \omega _4$.  Suppose $C, D$ are not nested.   Each $\omega _i$ determines a corner  $A_i$ of
$C, D$.   There are two possibilities.
\begin {itemize}
\item [(i)]  The sets $\omega _1, \omega _2$ determine opposite corners, and $\omega _3, \omega _4$ determine the other two corners.
\item [(ii)]  There is a pair  of  opposite corners such that one corner is determined by one of $\omega _1, \omega _2$ and the opposite corner
is determined by one of $\omega _3, \omega _4$.  
\end {itemize}

In case (i) $C$ and $D$ separate both pairs $\omega _1, \omega _2$ and $\omega _3, \omega _4$.   Since $C, D$ are optimally nested with respect to
$\omega _1, \omega _2$ and $\omega _3, \omega _4$,  we have   $\mu (C) = \mu(D)$.   But now  $A_1, A_2$ are opposite corners, and so $\mu (A_1) +\mu (A_2) < \mu (C)+\mu (D) =2\mu (C)$, by Lemma \ref {lemm:corners_equality}.  Since both $A_1, A_2$ separate $\omega _1$ and $\omega _2$ we have a contradiction.

In case (ii) suppose these corners are $A_1 = C\cap D$ and $A_3 = C^*\cap D^*$, and that $\omega_1$ belongs to $A_1\cup NA_1$
and $ \omega _3$ belongs to   $A_3\cup NA_3$.    But then $A_1$ separates $\omega _1$ and $\omega _2$ and $A_3$ separates $\omega _3$ and $\omega _4$.    Since $C$ is optimally nested with respect to $\omega _1$ and $\omega _2$ we have $\mu (A_1) \geq \mu (C)$ and since $D$ is optimally nested with respect to 
$\omega _3$ and $\omega _4$ we have $\mu (A_3) \geq \mu (D)$.   But it follows from  Lemma  \ref {lemm:corners_equality}
 that $\mu (A_1) + \mu(A_3) < \mu (C) + \mu(D)$ and so we have a
contradiction.    Thus $\cn$ is a nested cut system and the proof is complete.
\end{proof}

In the case of edge cuts,  one can take $\Omega $ to be the union of  set of edge ends and the set of vertices and obtain a sequence of
structure trees $T_k$ such that if two elements of $\Omega $ can be separated by removing $k$ edges then they are separated in the tree
$T_k$ (see \cite {D13} ).    It is not possible to get a similar result for vertex cuts as the following example shows.

\begin{exam}\label{exam:twoended}
Set $VX=\{v_i, u_j | i \in \Z, j \in \N\}$ and
\[EX=\{\{ v_i,v_{i+1} \} \mid i \in\Z\}\cup\{ \{ v_i, u_1\} \mid i\in\Z\}\cup\{\{u_j,u_{j+1} \} \mid j \in\N\}.\]
This graph is shown in Figure ~\ref{fig:9Exam}.

The following cut system $\cc$  is nested.     
The separators of $\cc$ are $\{ u_i\}$ for $i = 1, 2, \dots $ and $\{ u_1, v_i \}$ for $i \in \Z $.  We can construct structure trees for the set of $1$-separators (cut points) and also for the set of $2$-separators as shown, but there is no
natural way to construct a tree which includes information about all the separators.
The cut point tree $T(\cL)$ is as in the figure.   There is a block $B$ which consists of the full subgraph on
the vertices $\{ v_i \mid i \in \Z \}\cup \{ u_1 \}$.  This has a structure tree $T_B$ for $\kappa =2$ as also shown in the figure.

An automorphism of $X$ restricts to an automorphism of  $B$.
An automorphism of this  subgraph is induced by an automorphism of $X$ which fixes each $u_i$.
Thus the automorphism group of $X$ is an infinite dihedral group.

It is not possible to join the two trees  $T(\cL)$ and $T_B$  together in a way that admits the action of the automorphism group.
Thus the nested cut system $\cc $ does not correspond to a structure tree.

\end{exam}

\vskip-2mm
\begin{figure}[htbp]
\centering
\begin{tikzpicture}[scale=0.5]
    \path (0,0) coordinate (p0);     
        \path (1,0) coordinate (p7);  
         \path (8,0) coordinate (p8);           
    \path (2,0) coordinate (p1);
    \path (3,0) coordinate (p2);
    \path (4,0) coordinate (p3);
    \path (5,0) coordinate (p4);
    \path (6,0) coordinate (p5);
  \path (7,0) coordinate (p6);
\path (4,2) coordinate (q1);
    \path (4,4) coordinate (q2);
    \path (4,5) coordinate (q3);
    \path (4,5.6) coordinate (q4);
    \path (4,6) coordinate (q5);
    \draw (p1) -- (p5);
\draw [dashed] (p0) --(p1);
\draw [dashed] (p5) --(p8);
\draw [dashed] (p0)--(q1)--(p8);
\draw [dashed] (q1) -- (p7);
\draw [dashed] (q1) -- (p6);
  \filldraw (p7) circle (3pt) ;
\filldraw (p0) circle (3pt) ;
 \filldraw (p6) circle (3pt) ;
 \filldraw (p8) circle (3pt) ;
 
\filldraw (q3) circle (3pt) ;
 \filldraw (q4) circle (3pt) ;
 \filldraw (q5) circle (3pt) ;

  \filldraw (p1) circle (3pt) ;
  \filldraw (p2) circle (3pt);
  \filldraw (p3) circle (3pt);   
  \filldraw (p4) circle (3pt);
  \filldraw (p5) circle (3pt) ;
  \filldraw (q1) circle (3pt);  
 \filldraw (q2) circle (3pt);
\draw (p1) -- (q1) -- (p2);
     \draw (p3) -- (q1) -- (p4);
 \draw (p5) -- (q1) ;
\draw (q1) -- (q3) ;
\draw [dashed] (q3)--(4,6.6) ;
\path (10,2) coordinate (q1);
    \path (10,4) coordinate (q2);
    \path (10,5) coordinate (q3);
    \path (10,5.6) coordinate (q4);
\filldraw (q3) circle (3pt) ;
 \filldraw (q4) circle (2pt) ;
   \filldraw (q1) circle (3pt);  
 \filldraw (q2) circle (3pt);
\draw (q1) -- (q3) ;
\draw [dashed] (q3)--(10,6.6) ;
\path (10,3) coordinate (q1);
    \path (10,4.5) coordinate (q2);
    \path (10,5.3) coordinate (q3);
  \filldraw (q1) circle (3pt);  
 \filldraw (q2) circle (3pt);
  \filldraw (q3) circle (2pt);  
 \filldraw [white] (q2) circle (2pt);
  \filldraw [white] (q1) circle (2pt);  
 \filldraw[white] (q3) circle (1pt);

      \path (13,0) coordinate (p1);
    \path (14,0) coordinate (p2);
    \path (15,0) coordinate (p3);
    \path (16,0) coordinate (p4);
   \path (16.5,0) coordinate (p5);
   \path (10,0) coordinate (p6);
\path (12.5, 0) coordinate (r);
     \draw (p1) -- (p4);

     \path (13.5,0) coordinate (q1);
    \path (14.5,0) coordinate (q2);
    \path (15.5,0) coordinate (q3);
    \filldraw (p1) circle (3pt) ;
   
     \filldraw (p2) circle (3pt);
  \filldraw (p3) circle (3pt);  
   \filldraw (q1) circle (3pt) ;
   
     \filldraw (q2) circle (3pt);
  \filldraw (q3) circle (3pt);  
 \filldraw (q1) [white]  circle (2pt) ;
   
     \filldraw (q2) [white] circle (2pt);
  \filldraw (q3) [white] circle (2pt);  
\draw [dashed] (p1) -- (12, 0);
 \draw [dashed] (p4) -- (17, 0);
  \filldraw (p4) circle (3pt);
  \filldraw (p6) [white] circle (2pt);
\path (5.8, 2) coordinate (q);
    \path (5.8, 4) coordinate (r);
\draw node at (3.5,2.4) {$u_1$};
\draw node at (3.5,4) {$u_2$};
   
   \draw (4.5,-1) coordinate (p1);
\draw (p1) node [left ] {$X$};

  \draw (10, -1) coordinate (p1);
\draw (p1) node {$T(\cL )$ };
  \draw (14.5, -1) coordinate (p1);
\draw (p1) node {$T_B$ };
 \draw (10, 2) coordinate (p1);
\draw [left] (p1) node {$B$ };

\end{tikzpicture}
\vskip2mm\caption{Structure trees for a three-ended graph}\label{fig:9Exam}
\end{figure}
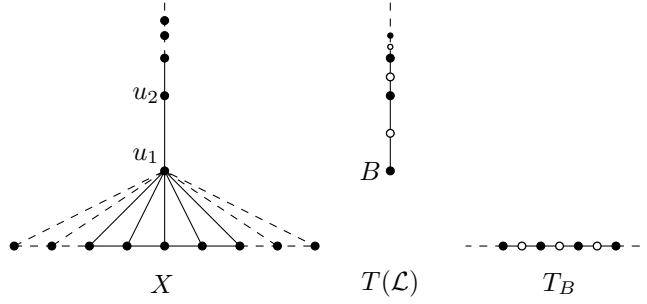

In the case when $\Omega $ is the set of $k$-inseparable sets the following result gives more information about a block $B$
and the graph $X_B$.
 \begin {theo}  Let $\Omega $ be the set of $k$-inseparable sets and  let $T = T(\cn )$  be the structure tree corresponding to the nested sub-system $\cn $ of the $\Omega $-cut system $\cc $.
 
 Each block  $B$ contains at most one maximal  $\kappa $-inseparable set.
  If a  subset of $B$  is $k$-inseparable in $X$ then it is $k$-inseparable in $X_B$.
  
 A subset of $B$ which is $k$-inseparable in   $X_B$ is $k$-inseparable in $X$, unless there is 
a cut $C$ in $\cn $ such that 
\begin {itemize} 
\item [(i)]    $NC \subset B$,
 and
 \item [(ii)]  $C\cup NC$ is a block $B'$ containing a  maximal $\kappa $-inseparable set, and $|B'| \leq 3\kappa /2$,
 \end {itemize}  
 \end {theo}
 \begin {proof}  Here  $\cn$ is the uniquely determined nested sub-system of Theorem ~\ref {strong}. 
 A $k$-inseparable set is $\kappa $-inseparable and so must be a subset of a block of $T$.
Two distinct maximal $\kappa $-inseparable sets can be separated by a $\kappa $-separator and so must lie in distinct blocks.
 
 If $S$ is a $k$-separator in $X_B$ then it is also a $k$-separator in $X$.   For if $A \subset B$ and $|NA| = k$ where $NA$ is the set of vertices of
 $B \setminus A$ which are adjacent to a vertex of $A$ in $X_B$, then  for every separator $NC$   of $\cn$ that is adjacent to $B$ we have
 either $NC   \subset A\cup NA$ or $NC \subset A^* \cup NA$, since $NC$ is the vertex set of a complete subgraph of $X_B$ and so no pair of vertices of $NC$ can be separated by $NA$.   Thus $NA = NA_X$, where $A'$ is the subset of $VX$ consisting of the union of $A$ with  cuts $C$ and slices of $\cn $ that are disjoint from $B$ but which have $NC \subset A\cup NA$.    
 
  Conversely  if $A_X$ is connected in $X$ and $NA_X \subset B$,
 then for every cut $C$ in $\cn $  such  that is $NC$ adjacent to $B$ we have either $C\subset A_X$ or $C\subset A_X^*$ and so $NC \subset A_X\cup NA_X$
 or $NC \subset A_X^*\cup NA_X$.   Thus $A  = A_X \cap B$ gives a separation of $X_B$ in which $NA = NA_X$.
 
 Finally we will show that if  if two $k$-inseparable sets  $\omega _1, \omega _2$  are contained in $B$ and are separated by a cut   $D$  in $X$, then they are separated
 by a connected cut, with the additional property that  $ND \subset B$.
We need to show that we can assume that $D$ is nested with every $C\in \cn$.    Here we will use the fact that no cut $C \in \cn$ satisfies both  conditions (i) and (ii).
     
 Suppose $C$ separates $\omega _1, \omega _2  \in \Omega$ where $C \in \cn$ and so  $|NC| = \kappa $.
  Similarly suppose $D$ separates $\omega _3, \omega _4$ , where $\omega _3$ and $\omega _4$ are $k$-inseparable sets with  $k  >\kappa $. and $|ND| =k $ is minimal for the cuts separating $\omega _3, \omega _4$.   Suppose that $C, D$ are not nested.
 We refer to Figure~\ref{fig:cross}.    We see that $|N(C\cap D)| + |N(C^*\cap D^*)| \leq |NC| + |ND|$ with equality if and only if every vertex of $NC\cap ND$ is
 in both $N(C\cap D)$ and $N(C^*\cap D^*)$,
and that  $|N(C\cap D^*)| + |N(C^*\cap D)| \leq |NC| + |ND|$ also with equality if and only if $NC\cap ND = N(C\cap D^*)= N(C^*\cap D)$.

We want to show that there exists a corner, say $C^* \cap D$,  that separates  $\omega _3, \omega _4$, with $|N(C^*\cap  D)| = k$.

Each  of  $\omega _3, \omega _4$ determines a unique corner $A_3, A_4$ of $C$ and $D$ such that $\omega_i$ is contained in the union of $A_i$, the two links which are adjacent to $A_i$ and the center.  Even though $A_i$ is uniquely determined, we cannot rule out the possibility
that $\omega _i \cap A_i=\emptyset$ at this stage.     If $\omega _3, \omega _4$  determine opposite corners - say $C\cap D, C^*\cap D^*$ - then we get a contradiction
since $|N(C\cap D)| + |N(C^*\cap D^*)| \leq k +\kappa < 2k$ and each of $N(C\cap D), N(C^*\cap D^*)$ separates $\omega _3, \omega _4$, which are $k$-inseparable, but cannot be separated by less than $k$ vertices.
Thus $\omega _3, \omega _4$ determine  adjacent corners, which we take to be $C^*\cap D$ and $C^*\cap D^*$.     It now follows that $b \geq a$,  $d\geq a$, since
otherwise one of $|N(C^*\cap D)| < k$, $|N(C^*\cap D^)| < k$.    In fact if $a< b\leq d$, then $|N(C\cap D^*)| < \kappa $ and $|N(C\cap D)| < \kappa $.  But $C\cup NC$ contains a $\kappa $-inseparable set $Y_1$. If $\omega _1$ has empty intersection with both $C\cap D$ and $C\cap D^*$, then $|\omega _1|\leq a + b +m +d <3\kappa /2$, since $a < \kappa /2$ and $b+m+d = \kappa $.  Since   (ii) does not hold we see that $\omega _1$ has non-empty intersection with
one of the two corners.   We get a contradiction since $\omega _1, \omega _2$ are separated by a set with less than $\kappa $ elements.  Thus $a = b$.     It follows that  $|N(C^*\cap D^*)| = k$ and $C^*\cap D^*$ separates $\omega _3$ and $\omega _4$.

If $\mu (D)$ is the number of cuts of $\cn$ with which $D$ is not nested, then $\mu (C^*\cap D^*) < \mu (D)$.   Repeating this process,
we eventually obtain a cut $D$ for which $\mu (D) =0$ and hence $ND$ is a subset of $B$.   
  
  \end {proof}
  It follows from the last theorem that if we are given a graph $X$ in which two $k$-inseparable sets  $\omega _1, \omega _2$  can be separated by
  removing $k$ vertices then we can form the unique structure tree $T = T(\cn)$.   Our two sets either belong to the same block $B$ or they are separated
  in $T$.   If they are not separated in $T$ then we can consider the graph  $X_B$.    Let $\kappa _B$ be the smallest value of $k$ for which
  there are $k$-inseparable subsets of $B$ that can be separated by removing $k$ vertices.  Then $\kappa _B > \kappa $.
  We can then construct a structure tree for $X_B$ and repeat the process.    We can keep repeating this process until we separate $\omega _1$ and $\omega _2$, unless in this process there is a vertex of $T$ that has degree one and which corresponds to a cut $C$  satisfying (i) and (ii).     In this latter case it may happen that $\omega _1,  \omega _2$ can be separated in $X$ but cannot be separated in $X_B$  by removing $k$ vertices.  If this happens, then $\omega _1 \cup \omega _2$ will be  $k$-inseparable in $X_B$.
  
  In the case when $\Omega $ is the set of ends of $X$ we can obtain the following in a similar way.
  \begin {theo}   Let $T = T(\cn )$  be the structure tree corresponding to the $\Omega $-cut system $\cc $.
 
 An end of $T$ corresponds to an element of $\Omega $.   An element of $\Omega $ that does not correspond to an end of $T$ belongs to a unique
 block $B$.
  Two elements of $\Omega $ that belong to  $B$  can be separated by $k$ vertices in $X$  if and only if  they can be separated by
  $k$ vertices in $X_B$.

 \end {theo}
 
 We can carry out the process as described above for when $\Omega $ is the set of $k$-inseparable sets.   In this case, when $\Omega $ is the
 set of ends, for any pair of ends $\omega _1,\omega _2$ we can repeat the process until we separate the ends.    Thus we will eventually obtain a block $B$ such that $\omega _1, \omega _2$ belong to $B$ and in the tree $T_B$ they either belong to distinct ends or to distinct block vertices
 or to an end and a vertex.

\section{Applications}
\subsection{Structure trees for finite graphs.}\label{subsect:structure_trees_finite}   Our main result for finite graphs is summarised as follows.

{\it Let $X$ be a finite graph with automorphism group $G$.
By Theorem \ref {strong},  there is a canonically determined thin  cut system $\cn $ invariant under $G$ which separates any pair of maximal $\kappa $-inseparable sets.  This gives rise to a canonical structure tree $T = T(\cn )$. This tree will admit an action of $G$.}

We illustrate the theory  in some examples.   

From Theorem~\ref{theo:mu=0} we know that $\mu_{\min} = 0$.   The set of cuts $C$ with $\mu (C) = 0$ form a nested subsystem $\cn $ and in
these  examples each  maximal $\kappa $-inseparable set is an $\cn $-block.  This is not  always be the case, as can be seen from Figure~\ref{fig:opt}.
We now give some examples.
\begin{exam}
The following example illustrates a tree decomposition of a 3-connected graph.
We take $X$ to be the graph shown in Figure~\ref{fig:3block} (a) and (b). Let $\cn$ be the  sub-system of  cuts nested with all cuts  in the cut system $\ci $ from Example~\ref{exam:finitecuts}.  In fact in this case $\cn = \ci$.
We have  $\kappa = 3$. There are eight ($3$-inseparable) blocks. Three of them are shown shaded dark in (a).   
The three corresponding vertices of the structure tree all have valency one.
Four blocks each consists of a $K_3$ or a $K_4$, shaded light in (a), together with $p$. 
There is another block which has valency $3$ in the structure tree. This is shown in (b) with dotted ideal edges.
The structure tree is shown in (c).

\begin{figure}[htbp]
\centering
\begin{tikzpicture}[scale=0.5]
\put(-120,18.2){
\path (0,2) coordinate (p1);
    \path (2,2) coordinate (p2);
    \path (0,0) coordinate (p3);
    \path (2,0) coordinate (p4);
    \path (4,0) coordinate (q2);
    \path (3,-3.2) coordinate (q3);
    \path (4.8, -2) coordinate (q4);
    \path (1.2, -2) coordinate (q5);
      \path (6,2) coordinate (r1);
    \path (4,2) coordinate (r2);
    \path (4,0) coordinate (r3);
    \path (6,0) coordinate (r4);
    \path (2, 4) coordinate (s3);
    \path (4,4) coordinate (s4);
    \path (6,4) coordinate (t3);    
    \path (3,3) coordinate (M);    
    \path (5,1) coordinate (N);    
     \path (intersection of p4--q3 and q2--q5) coordinate (Y);
      \path (intersection of p4--q4 and q2--q5) coordinate (Z);
 \path (intersection of p1--p4 and p2--p3) coordinate (X);
 \path (intersection of r1--s4 and r2--t3) coordinate (W);
 \path (intersection of p4--q4 and q2--q3) coordinate (V);
    
   \fill[black!15] (p1)--(r2)--(s4)--(s3)--(p1);
   % \fill[black!15] (p2)--(s4)--(r2);
           \fill[black!15] (r1)--(r2)--(q2)--(q4)--(r4)--(r1);
     %  \fill[black!15] (r1) --(r2)--(r3);
            \fill[black!45] (q2) --(q4)--(q3)--(q5)--(p4);
  \fill[black!45] (p2)--(p4)--(p3)--(p1);
 \fill[black!45] (r1)--(r2)--(s4)--(t3);
    
    \draw (p1)--(p4);
    \draw (q3)--(p4)--(q4);
    \draw (s4)--(r1);
    
    \draw (s3)--(r2);
    \draw (r2)--(r4);
    
 \filldraw[black!45] (X) circle (3pt);
 \filldraw[black!15] (M) circle (3pt);
 \filldraw[black!15] (N) circle (3pt);
 \filldraw[black!45] (Y) circle (3pt);
 \filldraw[black!45] (Z) circle (3pt);
 \filldraw[black!45] (W) circle (3pt);
 \filldraw[black!45] (V) circle (3pt);    
    \draw (2.3,0.4) node {$p$};
    
\draw (p4)--(s3)--(t3)--(r1)--(p1)--(p3)--(p4);
\draw (p3)--(s4)--(r2);
\draw (t3)--(r2)--(q2)--(r1);
 \draw(p1)--(s3);
 \draw(p4)--(q5)--(q3)--(q4)--(q2)--(r4)--(r1);
 \draw (r4)--(q4);
 \draw (q5)--(q2)--(q3);
    \filldraw (p1) circle (2pt) ;
    \filldraw (p2) circle (2pt) ; 
    \filldraw (p3) circle (2pt) ;
    \draw [dashed] (p4)-- (q2);
    \filldraw (p4) circle (2pt) ;
    \filldraw (q2) circle (2pt) ; 
    \filldraw (q3) circle (2pt) ;
      \filldraw (q4) circle (2pt) ;
 \filldraw (q5) circle (2pt) ;  
 \filldraw (r4) circle (2pt) ;
   \filldraw (r1) circle (2pt) ;    
     \filldraw (r2) circle (2pt) ;
    \filldraw (s4) circle (2pt) ;
  \filldraw (s3) circle (2pt) ;
 \filldraw (t3) circle (2pt) ;
 \draw (3,-5.5) node {a};
}

 \put(20,18.2){
 
\path (0,2) coordinate (p1);
    \path (2,2) coordinate (p2);
    \path (0,0) coordinate (p3);
    \path (2,0) coordinate (p4);
    \path (4,0) coordinate (q2);
    \path (3,-3.2) coordinate (q3);
    \path (4.8, -2) coordinate (q4);
    \path (1.2, -2) coordinate (q5);
      \path (6,2) coordinate (r1);
    \path (4,2) coordinate (r2);
    \path (4,0) coordinate (r3);
    \path (6,0) coordinate (r4);
    \path (2, 4) coordinate (s3);
    \path (4,4) coordinate (s4);
    \path (6,4) coordinate (t3);    
     \path (intersection of p4--q3 and q2--q5) coordinate (Y);
      \path (intersection of p4--q4 and q2--q5) coordinate (Z);
 \path (intersection of p1--p4 and p2--p3) coordinate (X);
 \path (intersection of r1--s4 and r2--t3) coordinate (W);
 \path (intersection of p4--q4 and q2--q3) coordinate (V);    
 
    \draw (p1)--(p4);
    \draw (q3)--(p4)--(q4);
    \draw (s4)--(r1);
    
       \fill[black!15] (s4)--(p4)--(r1)--(r2)--(s4);
    
    \draw (s3)--(r2);
    \draw (r2)--(r4);
    
 \filldraw[white] (M) circle (3pt);
 \filldraw[white] (N) circle (3pt);
 \filldraw[white] (X) circle (3pt);
 \filldraw[white] (Y) circle (3pt);
 \filldraw[white] (Z) circle (3pt);
 \filldraw[white] (W) circle (3pt);
 \filldraw[white] (V) circle (3pt);    
 \fill[black!45] (r1)--(r2)--(s4);
 \draw[dashed] (s4)--(p4)--(r2);
 \draw[dashed] (p4)--(r1);
    \draw (1.5,-0.4) node {$p$};
\draw (p4)--(s3)--(t3)--(r1)--(p1)--(p3)--(p4);
\draw (p3)--(s4)--(r2);
\draw (t3)--(r2)--(q2)--(r1);
 \draw(p1)--(s3);
 \draw(p4)--(q5)--(q3)--(q4)--(q2)--(r4)--(r1);
 \draw (r4)--(q4);
 \draw (q5)--(q2)--(q3);
    \filldraw (p1) circle (2pt) ;
    \filldraw (p2) circle (2pt) ; 
    \filldraw (p3) circle (2pt) ;
    \draw [dashed] (p4)-- (q2);
    \filldraw (p4) circle (2pt) ;
    \filldraw (q2) circle (2pt) ; 
    \filldraw (q3) circle (2pt) ;
      \filldraw (q4) circle (2pt) ;
 \filldraw (q5) circle (2pt) ;  
 \filldraw (r4) circle (2pt) ;
   \filldraw (r1) circle (2pt) ;    
     \filldraw (r2) circle (2pt) ;
    \filldraw (s4) circle (2pt) ;
  \filldraw (s3) circle (2pt) ;
 \filldraw (t3) circle (2pt) ;
\draw (3,-5.5) node {b};
 }

 \put(170,0){    
 \path (0,7.2) coordinate (p1);
    \path (0,6.6) coordinate (p2);
    \path (0,6) coordinate (p3);
  \draw (0,5.4) coordinate (p4);
   \draw (0,4.8) coordinate (p5);
    \draw (0,4.2) coordinate (p6);
       \draw (0,3.6) coordinate (p7);
    \draw (0,3) coordinate (p8);
    \draw (0, 2.4) coordinate (p9);
   \draw (0, 1.8) coordinate (p10);
   \draw (0, 1.2) coordinate (p11);
   \draw (0, 0.6) coordinate (p12);
   \draw (0, 0) coordinate (p13);
   \draw (0, -0.6) coordinate (p14);
   \draw (0, -1.2) coordinate (p15);
   \draw (0, -1.8) coordinate (p16);
   \draw (0, -2.4) coordinate (p17);
   \draw (0.6, 2.4) coordinate (p18);
   \draw (1.2, 2.4) coordinate (p19);
  %   \filldraw (p1) circle (3pt) ;
%\filldraw (p2) circle (3pt);
   \filldraw (p3) circle (3pt);  
 \filldraw (p4) circle (3pt);
\filldraw (p5) circle (3pt) ;
 \filldraw (p6) circle (3pt);  
\draw (p3) --(p15);
 \draw (p9) -- (p19);
 
%\filldraw [white] (p2) circle (3pt);
%\draw (p2) circle (3pt);
\filldraw [white] (p4) circle (3pt);
\draw (p4) circle (3pt);
\filldraw [white] (p6) circle (3pt);
\draw (p6) circle (3pt);
\filldraw [white] (p8) circle (3pt);
\draw (p8) circle (3pt);
\filldraw [white] (p10) circle (3pt);
\draw (p10) circle (3pt);
\filldraw [white] (p12) circle (3pt);
\draw (p12) circle (3pt);
\filldraw [white] (p14) circle (3pt);
\draw (p14) circle (3pt);
%\filldraw [white] (p16) circle (3pt);
%\draw (p16) circle (3pt);
\filldraw [white] (p18) circle (3pt);
\draw (p18) circle (3pt);
 
\filldraw (p11) circle (3pt);
%\filldraw (p1) circle (3pt);
\filldraw (p3) circle (3pt);
\filldraw (p5) circle (3pt);
\filldraw (p7) circle (3pt);
\filldraw (p9) circle (3pt);
\filldraw (p11) circle (3pt);
\filldraw (p13) circle (3pt);
\filldraw (p15) circle (3pt);
%\filldraw (p17) circle (3pt);
\filldraw (p19) circle (3pt);
\draw (0.1,-4.2) node {c};
}

\end{tikzpicture}
\vskip-0.6cm \caption{Decomposition of a 3-connected graph}\label{fig:3block}\vskip-3mm
\end{figure}
\end{exam}%\vskip-12mm\mbox{}

\begin{exam}
In the next example there is an $\cn$-block which is also an $\ci$-block and which is not connected. 
It consists of the three vertices on the top together with the three vertices at the bottom. In the tree it corresponds to the central vertex. The three ``vertical sides'' of the graph, which are unions of three tines of the dragon's neck, correspond to a $\cn$-block which is not an $\ci$-block.

\vskip 5mm
\begin{figure}[htbp]
\centering
\begin{tikzpicture}[scale=0.45]

\put(-80,0){
\path (0,1) coordinate (a1); 
\path (0,3) coordinate (a2); 
\path (0,5) coordinate (a3); 
\path (0,7) coordinate (a4); 
\path (3,0) coordinate (b1); 
\path (3,2) coordinate (b2); 
\path (3,4) coordinate (b3); 
\path (3,6) coordinate (b4); 
\path (4,3) coordinate (c1); 
\path (4,5) coordinate (c2); 
\path (4,7) coordinate (c3); 
\path (4,9) coordinate (c4); 
\path (-2.4,1.7) coordinate (A1); 
\path (-2.4,3.7) coordinate (A2); 
\path (-2.4,5.7) coordinate (A3); 
\path (4.8,-0.9) coordinate (B1); 
\path (4.8,1.1) coordinate (B2); 
\path (4.8,3.1) coordinate (B3); 
\path (5.2,5.9) coordinate (C1); 
\path (5.2,7.9) coordinate (C2); 
\path (5.2,9.9) coordinate (C3); 
\draw (c4)--(a4)--(b4)--(c4)--(C3)--(c3)--(C2)--(c2)--(C1)--(c1)--(c4);
\draw (a1)--(c1)--(b1);
\filldraw[white] (intersection of c1--C1 and b4--B3) circle (4pt);
\filldraw[white] (intersection of c1--c4 and b4--B3) circle (4pt);
\filldraw[white] (intersection of c1--c4 and b3--B3) circle (4pt);
\filldraw[white] (intersection of c1--a1 and b1--b4) circle (4pt);
\filldraw[white] (intersection of c1--a1 and b3--B2) circle (4pt);
\filldraw[white] (intersection of c1--b1 and B1--b2) circle (4pt);
\filldraw[white] (intersection of c1--b1 and b2--B2) circle (4pt);
\filldraw[white] (intersection of c1--b1 and b3--B2) circle (4pt);
\draw (b1)--(b4)--(B3)--(b3)--(B2)--(b2)--(B1)--(b1)--(a1)--(a4)--(A3)--(a3)--(A2)--(a2)--(A1)--(a1);
\filldraw (a1) circle (2pt);
\filldraw (a2) circle (2pt);
\filldraw (a3) circle (2pt);
\filldraw (a4) circle (2pt);
\filldraw (A1) circle (2pt);
\filldraw (A2) circle (2pt);
\filldraw (A3) circle (2pt);
\filldraw (b1) circle (2pt);
\filldraw (b2) circle (2pt);
\filldraw (b3) circle (2pt);
\filldraw (b4) circle (2pt);
\filldraw (B1) circle (2pt);
\filldraw (B2) circle (2pt);
\filldraw (B3) circle (2pt);
\filldraw (c1) circle (2pt);
\filldraw (c2) circle (2pt);
\filldraw (c3) circle (2pt);
\filldraw (c4) circle (2pt);
\filldraw (C1) circle (2pt);
\filldraw (C2) circle (2pt);
\filldraw (C3) circle (2pt);
}

\put(80,40){
\draw (330:2) +(330:2)--(0,0);
\draw  (330:2)+(260:2) --(330:2);
\draw (330:2)+(40:2)--(330:2);

\draw (210:2) +(210:2)--(0,0);
\draw (210:2)+(280:2)--(210:2);
\draw (210:2)+(140:2)--(210:2);

\draw (0,4)--(0,0);
\draw (0,2)+(20:2)--(0,2);
\draw (0,2)+(160:2)--(0,2);
\filldraw(0:0) +(0,2) circle (3pt);
\draw(20:1) [fill=white] +(0,2) circle (3pt);
\draw(90:1) [fill=white] +(0,2) circle (3pt);
\draw(160:1) [fill=white] +(0,2) circle (3pt);
\draw(270:1) [fill=white] +(0,2) circle (3pt);
\filldraw(20:2) +(0,2) circle (3pt);
\filldraw(90:2) +(0,2) circle (3pt);
\filldraw(160:2) +(0,2) circle (3pt);
\filldraw(270:2) +(0,2) circle (3pt);
\filldraw(0:0) +(210:2) circle (3pt);
\draw(30:1) +(210:2) [fill=white] circle (3pt);
\draw(140:1) +(210:2) [fill=white] circle (3pt);
\draw(210:1) +(210:2) [fill=white] circle (3pt);
\draw(280:1) +(210:2) [fill=white] circle (3pt);
\filldraw(30:2) +(210:2) circle (3pt);
\filldraw(140:2) +(210:2) circle (3pt);
\filldraw(210:2) +(210:2) circle (3pt);
\filldraw(280:2) +(210:2) circle (3pt);
\filldraw(0:0) +(330:2) circle (3pt);
\draw(40:1) +(330:2) [fill=white] circle (3pt);
\draw(150:1) +(330:2) [fill=white] circle (3pt);
\draw(260:1) +(330:2) [fill=white] circle (3pt);
\draw(330:1) +(330:2) [fill=white] circle (3pt);
\filldraw(40:2) +(330:2) circle (3pt);
\filldraw(150:2) +(330:2) circle (3pt);
\filldraw(260:2) +(330:2) circle (3pt);
\filldraw(330:2) +(330:2) circle (3pt);
}
\end{tikzpicture}
\vskip-9mm\caption{Dragon's Neck Graph}\label{fig:dragon}\vskip-2mm
\end{figure}

\vskip-8mm
\end{exam}

\begin{exam}
The third example is  the graph in Figure~\ref{fig:Tutte} which is similar to the example of Figure IV.3.4 from Tutte's book \cite{Tutte1984}. In contrast to Tutte, we are not considering multiple edges, because multiple and single edges are indistinguishable for vertex cuts.

\begin{figure}[htbp]
\centering
\begin{tikzpicture}[scale=0.4]
\put(-91,0){
\path (3,8.7) coordinate (a1); 
\path (3,7.3) coordinate (a2); 
\path (5,8) coordinate (p1); 
\path (6,11) coordinate (p2);  
\path (3,12) coordinate (p3); 
\path (3,10) coordinate (p4);  
\path (0,11) coordinate (p5);  
\path (1,8) coordinate (p6);  
\path (3,6) coordinate (p7);  
\path (3,4) coordinate (p8);  
\path (1,4) coordinate (p9);  
\path (1,2) coordinate (p10);  
\path (3,2) coordinate (p11);  
\path (5,4) coordinate (p12);   
\path (5,2) coordinate (p13);   
\path (5,0) coordinate (p14);   
\path (7,2) coordinate (p15);   
\path (7,4) coordinate (p16);   
\path (9,6) coordinate (p17);  
\path (9.27,6.09) coordinate (nn);  
\path (8,9) coordinate (p18);   
\path (11,10) coordinate (p19);  
\path (12,7) coordinate (p20); 
\path (7,7) coordinate (p21); 
\path (9,11) coordinate (p22); 
\path (13,9) coordinate (p23); 
\path (10.8,5.7) coordinate (p24);
\path (11.5,3.5) coordinate (p25);
\filldraw[rounded corners=4pt,black!15]  (2.6,5)--(2.6,6.5)--(3.4,6.5)--(3.4,3.5)--(2.6,3.5)--(2.6,5);
\filldraw[rounded corners=4pt,black!15]  (4.6,2)--(4.6,4.4)--(7.4,4.4)--(7.4,1.7)--(5.3,-0.4)--(4.6,-0.4)--(4.6,2);
\filldraw (a1) [black!15] circle (12pt);
\filldraw (a1) circle (3pt);
\filldraw (a2) [black!15] circle (12pt);
\filldraw (a2) circle (3pt);
\filldraw (p2) [black!15] circle (12pt);
\filldraw (p2) circle (3pt);
\filldraw (p3) circle (3pt);
\filldraw (p4) circle (3pt);
\filldraw (p5) [black!15] circle (12pt);
\filldraw (p5) circle (3pt);
\filldraw (p7) circle (3pt);
\filldraw (p8) circle (3pt);
\filldraw (p9) circle (3pt);
\filldraw (p10) [black!15] circle (12pt);
\filldraw (p10) circle (3pt);
\filldraw (p11) circle (3pt);
\filldraw (p12) circle (3pt);
\filldraw (p13) circle (3pt);
\filldraw (p14) circle (3pt);
\filldraw (p15) circle (3pt);
\filldraw (p16) circle (3pt);
\filldraw (p18) circle (3pt);
\filldraw (p19)circle (3pt);
\filldraw (p21) circle (3pt);
\filldraw (p22) [black!15] circle (12pt);
\filldraw (p22) circle (3pt);
\filldraw (p23) [black!15] circle (12pt);
\filldraw (p23) circle (3pt);
\filldraw (p24) [black!15] circle (12pt);
\filldraw (p24) circle (3pt);
\filldraw (p25) [black!15] circle (12pt);
\filldraw (p25) circle (3pt);
\draw (p1)--(p6)--(p8)--(p1) -- (p2) -- (p3)--(p1)--(p4)--(p3) -- (p5)--(p6)--(p3);
\draw (p4)--(p6) -- (a1)-- (p1)--(p7)--(p1)--(a2)--(p6);
\draw (p8)--(p7) -- (p6) -- (p9)--(p10)--(p11)--(p9);
\draw (p11)--(p15)--(p12)--(p14)--(p11)--(p12);
\draw (p14)--(p15)--(p16)--(p12)--(p17)--(p16);
\draw (p15)--(p17)--(p18)--(p19)--(p20)--(p17);
\draw (p1)--(p17)--(p24)--(p20)--(p23)--(p22)--(p22)--(p21);
\draw (p17)--(p25)--(p20);
\draw (p1) [fill=white] circle (4pt);
\draw (p6) [fill=white] circle (4pt);
\draw (p17) [fill=white] circle (4pt);
\draw (p20) [fill=white] circle (4pt);
\draw (p1) node [below] {\hskip1mm 1};
\draw (p2) node [right] {\hskip0.6mm 2};
\draw (p3) node [above] {3};
\draw (p4) node [above] {\hskip2.55mm 4};
\draw (a1) node [above] {5};
\draw (a2) node [below] {6};
\draw (p7) node [below] {\hskip2.5mm 7};
\draw (p8) node [below] {8};
\draw (p5) node [left] {{}\hskip-5mm 9};
\draw (p6) node [left] {10};
\draw (p9) node [left] {11};
\draw (p10) node [left] {{}\hskip-7mm 12};
\draw (p11) node [below] {\hskip-2mm 13};
\draw (p12) node [above] {\hskip-2mm 14};
\draw (p13) node [above] {\hskip-5mm 15};
\draw (p14) node [below] {16};
\draw (p16) node [below] {\hskip-4.4mm 17};
\draw (p15) node [below] {\hskip2mm 18};
\draw (nn) node [above] {\hskip0.8mm 19};
\draw (p18) node [left] {20};
\draw (p19) node [above] {\hskip2mm 21};
\draw (p20) node [right] {22};
\draw (p21) node [below] {\hskip-1mm 24};
\draw (p22) node [above] {25};
\draw (p23) node [right] {26};
\draw (p24) node [below] {\hskip1mm 23};
\draw (p25) node [below] {27};
\draw (7.7,-1.5) node {13a};
}

\put(91,0){
\fill[black!35] (p1)--(p3)--(p6)--(p4);
\fill[black!35] (p17)--(p18)--(p21)--(p17);
\fill[black!10] (p1)--(p6)--(p9)--(p11)--(p17)--(p21)--(p1);
\fill[black!10] (p17)--(p18)--(p19)--(p20)--(p17);
\draw [dashed] (p17)--(p11) ;
\filldraw (a1) circle (3pt);
\filldraw (a2) circle (3pt);
\filldraw (p1) circle (3pt);
\filldraw (p2) circle (3pt);
\filldraw (p3) circle (3pt);
\filldraw (p4) circle (3pt);
\filldraw (p5) circle (3pt);
\filldraw (p6) circle (3pt);
\filldraw (p7) circle (3pt);
\filldraw (p8) circle (3pt);
\filldraw (p9) circle (3pt);
\filldraw (p10) circle (3pt);
\filldraw (p11) circle (3pt);
\filldraw (p12) circle (3pt);
\filldraw (p13) circle (3pt);
\filldraw (p14) circle (3pt);
\filldraw (p15) circle (3pt);
\filldraw (p16) circle (3pt);
\filldraw (p17) circle (3pt);
\filldraw (p18) circle (3pt);
\filldraw (p19) circle (3pt);
\filldraw (p20) circle (3pt);
\filldraw (p21) circle (3pt);
\filldraw (p22) circle (3pt);
\filldraw (p23) circle (3pt);
\filldraw (p24) circle (3pt);
\filldraw (p25) circle (3pt);

\draw (p1)--(p6)--(p8)--(p1) -- (p2) -- (p3)--(p1)--(p4)--(p3) -- (p5)--(p6)--(p3);
\draw (p4)--(p6) -- (a1)-- (p1)--(p7)--(p1)--(a2)--(p6);
\draw (p8)--(p7) -- (p6) -- (p9)--(p10)--(p11)--(p9);
\draw (p11)--(p15)--(p12)--(p14)--(p11)--(p12);
\draw (p14)--(p15)--(p16)--(p12)--(p17)--(p16);
\draw (p15)--(p17)--(p18)--(p19)--(p20)--(p17);
\draw (p1)--(p17)--(p24)--(p20)--(p23)--(p22)--(p22)--(p21);
\draw (p17)--(p25)--(p20);

\draw (p1) node [below] {\hskip1mm 1};
\draw (p2) node [right] {2};
\draw (p3) node [above] {3};
\draw (p4) node [above] {\hskip2.55mm 4};
\draw (a1) node [above] {5};
\draw (a2) node [below] {6};
\draw (p7) node [below] {\hskip2.5mm 7};
\draw (p8) node [below] {8};
\draw (p5) node [left] {9};
\draw (p6) node [left] {10};
\draw (p9) node [left] {11};
\draw (p10) node [left] {12};
\draw (p11) node [below] {\hskip-2mm 13};
\draw (p12) node [above] {\hskip-2mm 14};
\draw (p13) node [above] {\hskip-5mm 15};
\draw (p14) node [below] {16};
\draw (p16) node [below] {\hskip-4.4mm 17};
\draw (p15) node [below] {\hskip2mm 18};
\draw (nn) node [above] {\hskip0.8mm 19};
\draw (p18) node [left] {20};
\draw (p19) node [above] {\hskip2mm 21};
\draw (p20) node [right] {22};
\draw (p21) node [below] {\hskip-1mm 24};
\draw (p22) node [above] {25};
\draw (p23) node [right] {26};
\draw (p24) node [below] {\hskip1mm 23};
\draw (p25) node [below] {27};
\draw (7.7,-1.5) node {13b};}
\end{tikzpicture}
\vskip-2mm\caption{Tree decomposition of a 2-connected graph}\label{fig:Tutte}\vskip-3mm
\end{figure}

The separators are $s_1=\{1,10\}$, $s_2=\{3,10\}$, $s_3=\{1,3\}$, $s_4=\{11,13\}$, $s_5=\{13,19\}$, $s_6=\{19,24\}$, $s_7=\{19,20\}$, $s_8=\{20,21\}$, $s_{9}=\{21,22\}$, $s_{10}=\{19,22\}$. Every component in the complement of a separator is one of the cuts in $\cn$. There are no slices. All separators have two components in their complement, except for $s_1$ and $s_{10}$, which are drawn white in Figure~\ref{fig:Tutte}a.

The cuts which are only $\sim$-equivalent to themselves are $\{2\}$ ($b_5$), $\{5\}$ ($b_1$), $\{6\}$ ($b_2$), $\{7,8\}$ ($b_3$), $\{9\}$ ($b_4$), $\{12\}$ ($b_7$), $\{14,15,16,17,18\}$ ($b_8$), $\{23\}$ ($b_{13}$), $\{25\}$ ($b_{10}$), $\{26\}$ ($b_{11}$), $\{27\}$ ($b_{12}$). The corresponding blocks $b_i$ (in parentheses) are the union of cut and separator. These cuts are shaded grey in Figure~\ref{fig:Tutte}a. The blocks are the leaves of the structure tree in Figure~\ref{fig:Tutte3}.

The block $b_8=\{13,14,\ldots,19\}$ is 2-inseparable (within the whole graph) but not 3-connected, but becomes 3-connected after adding an ideal edge joining 13 and 19.

The blocks $b_6=\{1,3,4,10\}$ and $b_9=\{19,20,24\}$ are shaded in dark grey in Figure~\ref{fig:Tutte}b. The corresponding $\sim$-classes consist of the cuts which are minimal with respect to containing these blocks.

The sets $b_{14}=\{ 1, 10, 11, 13, 19, 24\}$ and $b_{15}=\{ 19, 20, 21, 22 \}$ are shaded in light grey in Figure~\ref{fig:Tutte}b. They are blocks with respect to $\cn$ but not in the system of all cuts from Example~\ref{exam:finitecuts} for $\kappa=2$.
Each separator in one of these two blocks has exactly one component $C$ in its complement, such that $C\cup NC$ contains the block. These are the cuts which form the corresponding $\sim$-classes.  The $*$-complements of these cuts are arranged in a cycle.

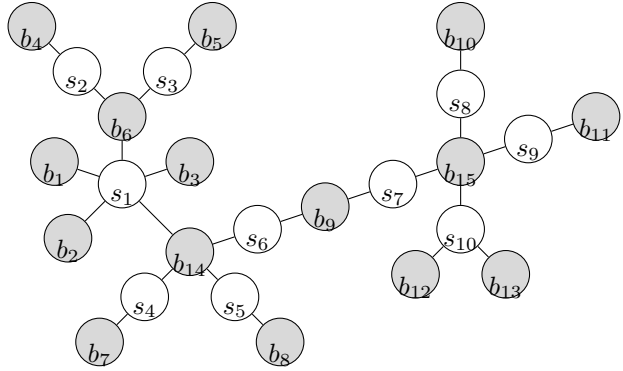
\begin{figure}[htbp]
\centering
\begin{tikzpicture}[scale=0.3]
\put(0,-5){
\node (b5) at (8,14) [shape=circle,minimum size=18pt,inner sep=0pt,draw,fill=black!15] {$b_5$}; 
\node (s1) at (6,12) [shape=circle,minimum size=18pt,inner sep=0pt,draw] {$s_3$}; 
\node (b6) at (4,10) [shape=circle,minimum size=18pt,inner sep=0pt,draw,fill=black!15] {$b_6$}; 
\node (b4) at (0,14) [shape=circle,minimum size=18pt,inner sep=0pt,draw,fill=black!15] {$b_4$}; 
\node (s3) at (2,12) [shape=circle,minimum size=18pt,inner sep=0pt,draw] {$s_2$}; 
\node (s2) at (4,7) [shape=circle,minimum size=18pt,inner sep=0pt,draw] {$s_1$}; 
\node (b14) at (7,4) [shape=circle,draw,minimum size=18pt,inner sep=0pt,fill=black!15] {$b_{14}$}; 
\node (b1) at (1,8) [shape=circle,minimum size=18pt,inner sep=0pt,draw,fill=black!15] {$b_{1}$}; 
\node (b2) at (1.6,4.6) [shape=circle,minimum size=18pt,inner sep=0pt,draw,fill=black!15] {$b_2$}; 
\node (b3) at (7,8) [shape=circle,minimum size=18pt,inner sep=0pt,draw,fill=black!15] {$b_3$}; 
\node (s4) at (5,2) [shape=circle,minimum size=18pt,inner sep=0pt,draw] {$s_4$}; 
\node (s5) at (9,2) [shape=circle,minimum size=18pt,inner sep=0pt,draw] {$s_5$}; 
\node (s8) at (10,5) [shape=circle,minimum size=18pt,inner sep=0pt,draw] {$s_6$}; 
\node (b7) at (3,0) [shape=circle,minimum size=18pt,inner sep=0pt,draw,fill=black!15] {$b_7$}; 
\node (b8) at (11,0) [shape=circle,minimum size=18pt,inner sep=0pt,draw,fill=black!15] {$b_8$}; 
\node (b9) at (13,6) [shape=circle,minimum size=18pt,inner sep=0pt,draw,fill=black!15] {$b_9$}; 
\node (s7) at (16,7) [shape=circle,minimum size=18pt,inner sep=0pt,draw] {$s_7$}; 
\node (b15) at (19,8) [shape=circle,minimum size=18pt,inner sep=0pt,draw,fill=black!15] {$b_{15}$}; 
\node (s10) at (22,9) [shape=circle,minimum size=18pt,inner sep=0pt,draw] {$s_{9}$}; 
\node (b11) at (25,10) [shape=circle,minimum size=18pt,inner sep=0pt,draw,fill=black!15] {$b_{11}$}; 
\node (s9) at (19,11) [shape=circle,minimum size=18pt,inner sep=0pt,draw] {$s_8$};
\node (b10) at (19,14) [shape=circle,minimum size=18pt,inner sep=0pt,draw,fill=black!15] {$b_{10}$}; 
\node (s6) at (19,5) [shape=circle,minimum size=18pt,inner sep=0pt,draw] {$s_{10}$}; 
\node (b12) at (17,3) [shape=circle,minimum size=18pt,inner sep=0pt,draw,fill=black!15] {$b_{12}$}; 
\node (b13) at (21,3) [shape=circle,minimum size=18pt,inner sep=0pt,draw,fill=black!15] {$b_{13}$}; 
\draw (b10)--(s9)--(b15)--(s6)--(b12);
\draw (s6)--(b13);
\draw (b14)--(s5)--(b8);
\draw (b1)--(s2)--(b3);
\draw (s2)--(b2);
\draw (b4)--(s3)--(b6)--(s1)--(b5);
\draw (b6)--(s2)--(b14);
\draw (b7)--(s4)--(b14)--(s8)--(b9)--(s7)--(b15)--(s10)--(b11);}
\end{tikzpicture}
\caption{Structure tree for the graph in Figure~\ref{fig:Tutte}}\label{fig:Tutte3}\vskip-3mm
\end{figure}
The number of cut components of a separator is the degree of the corresponding white vertex in the structure tree, see Figure~\ref{fig:Tutte3}.
The degree of a black vertex is the cardinality of the corresponding $\sim$-class.

\end{exam}

It is a consequence of our main result (Theorem~ \ref{theo:main}) that if a $G$-graph has a non-trivial cut system, then
there is a homomorphism of $G$ to the automorphism group of a tree.  Thus  there is a $G$-tree $T = T(\cn)$  associated with a nested sub-system $\cn$ of $\cc$.   The actions of groups on trees are completely described in the theory of Bass and Serre (see \cite{Dicks1989,Serre2003,Serre1980}).

If a $G$-tree has finite diameter - in particular if $T$ is finite - then there is an edge or vertex fixed by $G$.   In the case of a structure tree
all leaves correspond to blocks and so the diameter is even. If the diameter is divisible by four then $G$ fixes a block, otherwise it fixes a separator.

Finally we  note that the theory is not of any interest for finite transitive graphs.
\begin{lemm}\label{lemm:noleaves}
If $\cc$ is a thin nested $G$-invariant cut system for a $G$-transitive graph then $T(\cc)$ has no leaves.
\end{lemm}

\begin{proof}
Such a leaf would correspond to a block $B$. There is precisely one separator $S$ contained in $B$, see Corollary~\ref{coro:leaves}, and there is a vertex in $B\setminus S$. This vertex is not contained in any separator. Since $\cc$ is $G$-invariant and the graph is $G$-transitive, the same must hold for any vertex of the graph. Thus no vertex is contained in any separator, a contradiction.
\end{proof}

\begin{coro}\label{coro:finitetrans}
A finite $G$-transitive graph has no nested $G$-invariant thin cut system. In particular, the system of  Example~\ref{exam:finitecuts} is empty for finite transitive graphs.
\end{coro}

\subsection{Generalizations of Stallings' Theorem}\label{subsect:application_stalling}

If a structure tree has infinite diameter, then $G$ may induce a non-trivial action on the structure tree. We briefly describe the relevant results from Bass-Serre theory.

A group is said to \emph{split} over a subgroup $H$ if it is either a free product of two groups with amalgamation over $H$, where these two groups contain $H$ as a subgroup of index at least two, or if $G$ is an HNN-extension of some group over $H$.

An \emph{inversion of an edge} by a graph automorphism occurs if its vertices are transposed.
Suppose a group $G$ acts transitively and without inversion on the set of edges of a tree $T$. Then either the quotient $G\backslash T$ is a loop and $G$ is an HNN-extension of the stabilizer of some vertex of $T$ over the stabilizer of an incident edge. Or $G$ has two orbits on $VT$, the quotient $G\backslash T$ is  a graph with two vertices connected by an edge (called a \emph{segment}), and $G$ is a free product of the stabilizers of two adjacent vertices in $T$ with amalgamation over the stabilizer of the edge which connects them.  This decomposition is trivial if and only if the stabilizer of an edge is
the same as the stabilizer of one of its vertices and the whole group stabilizes the other vertex $v$.  If this happens then the tree $T$ has diameter 
two, with central vertex $v$.  The action is non-trivial if and only if for each edge $e \in ET$ both components of $T \setminus \{ e \}$
contain at least one edge (or equivalently, at least two vertices).  In fact if the action is non-trivial, then both components of  $T \setminus \{ e \}$
are infinite.
Thus  if $G$ acts transitively without inversion on the set of edges of $T$ then either $T$ has diameter two or $G$ splits over the stabilizer of an edge.
The latter happens if and only if for some edge $e$  both components of $T\setminus \{ e\}$ intersect the orbit of $e$.

More generally, the action (without inversion) of  a  group $G$ on a $G$-tree $T$ is non-trivial if and only if either $G$ splits over an edge stabilizer or it is a strictly ascending union
$$ G = \bigcup _n G_n, $$
where $G_1 \subset G_2\subset \dots $ is a an infinite sequence of proper subgroups of $G$ each of which stabilizes an edge of $T$.

If $G$ is a group, a Cayley graph for $G$ is a connected $G$-graph with one orbit of vertices and on which $G$ acts freely.
The edge orbits will correspond to a set of generators for $G$.  There is a locally finite Cayley graph if and only if $G$ is finitely generated.
Different locally finite  Cayley graphs of a finitely generated group are quasi-isometric. The number of ends of a locally finite graph  is a quasi-isometry invariant and hence it does not depend on the finite set of generators. Thus we define the \emph{number of ends of a finitely generated group} as the number of ends of its locally finite connected Cayley graphs.

The following was proved by Stallings in a series of papers (see \cite{Stallings1968,Stallings1970,Stallings1971}).

\begin{theo}[Stallings' Structure Theorem \cite{Dunwoody1982,Cohen1972}]
A finitely generated group has more than one end if and only if it splits over a finite subgroup.
\end{theo}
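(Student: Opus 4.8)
The plan is to deduce the theorem from the structure tree constructed above, applied to a locally finite Cayley graph of $G$. The ``if'' direction is the classical easy half of Stallings' theorem and uses none of the new machinery: if $G$ splits over a finite subgroup $H$ it acts without inversion on the associated Bass--Serre tree $T$ with every edge stabiliser conjugate to $H$ and with the action non-trivial, and transporting one edge of $T$ through a $G$-equivariant map from a locally finite Cayley graph $X$ to $T$ produces a finite edge cut of $X$ with two infinite sides, so $X$, and hence $G$, has more than one end. I would dispatch this in a sentence and concentrate on the converse.

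For the converse, assume $G$ is finitely generated with more than one end; then $G$ is infinite and a locally finite connected Cayley graph $X$ of $G$ has more than one end, which for locally finite $X$ means more than one vertex end. Let $\cc$ be the vertex-end cut system of Example~\ref{exam:vertexends}; it is non-empty and, since $G$ acts on $X$ by automorphisms preserving ends, it is $G$-invariant. By Theorem~\ref{theo:main} there is a $G$-invariant nested subsystem $\ce\subseteq\cc$ of minimal cuts with an associated structure tree $T=T(\ce)$ on which $G$ acts, and the action is without inversion because $T$ is bipartite with $G$-invariant parts $\cs$ and $\cb$. I would also note that, as $G$ acts transitively on $VX$, every non-empty $G$-invariant subset of $VX$ equals $VX$; in particular there are no slices for $\ce$ and $\hat X=X$.

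Two routine points follow. First, every edge stabiliser is finite: an edge of $T$ is a cut $E\in\ce$, and an element stabilising it fixes the finite set $NE$ setwise, so, since $G$ acts freely on $VX$ and hence no non-trivial element fixes a vertex, $G_E$ embeds into the symmetric group on $NE$. Second, by the Bass--Serre facts recalled just before the statement, the action of $G$ on the $G$-tree $T$ is non-trivial if and only if $G$ splits over an edge stabiliser or $G$ is a strictly ascending union of edge stabilisers; the latter cannot occur because a finite generating set of $G$ would lie in one term of the union. So the theorem reduces to showing the $G$-action on $T$ has no global fixed vertex, which then forces $G$ to split over one of the finite groups $G_E$.

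Suppose $G$ fixes a vertex $w$ of $T$. If $w=S\in\cs$ is a separator then $G=G_S$ embeds into the symmetric group on the finite set $S$, contradicting $|G|=\infty$. The essential case is $w=B\in\cb$, a non-empty maximal $\ce$-inseparable and $G$-invariant set. If $B$ were finite the previous argument would again make $G$ finite, so $B$ is infinite. Choose $E\in\ce$ separating two ends of $X$: two ends realising the minimal separator size $\kappa$ are separated by a minimal cut in $\cc$, hence by a cut of $\ce$ by Theorem~\ref{theo:main}, so $E$ and $E^*$ are both non-empty. Since $B$ is $\ce$-inseparable and infinite, after possibly replacing $E$ by $E^*$ we have $B\subseteq E\cup NE$, hence $B\cap E^*=\emptyset$. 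Because $gB=B$ for every $g\in G$, applying $g^{-1}$ to $B\subseteq E\cup NE$ gives $B\subseteq g^{-1}E\cup N(g^{-1}E)$, so $B\cap hE^*=\emptyset$ for all $h\in G$; thus $B$ is disjoint from the non-empty $G$-invariant set $\bigcup_{h\in G}hE^*$, which by transitivity is all of $VX$, forcing $B=\emptyset$ and contradicting that $B$ is infinite. Hence $G$ fixes no vertex of $T$, the action is non-trivial, and $G$ splits over a finite subgroup. The main obstacle in all of this is exactly the block case: one needs the structure tree of Theorem~\ref{theo:main} to genuinely separate ends of $X$ and a fixed block to be forced infinite and $\ce$-inseparable; everything else is immediate once those are in hand.
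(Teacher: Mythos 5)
Your proof is correct and follows essentially the paper's own route: the paper states Stallings' theorem with references and then proves the stronger assertion that any group with a Cayley graph having more than one end splits over a finite subgroup by applying Theorem~\ref{theo:main} to the vertex-end cut system of Example~\ref{exam:vertexends}, exactly as in your converse, with finiteness of edge stabilizers coming from freeness of the action on the finite sets $NE$ and non-triviality of the splitting coming from vertex-transitivity. The only differences are cosmetic: in the easy direction the paper exhibits explicit Cayley graphs adapted to the splitting rather than pulling back a Bass--Serre edge, and your fixed-vertex analysis of the block case spells out in detail what the paper compresses into its one-line appeal to vertex transitivity and infinite components.
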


The first author proved Stallings' theorem in \cite{Dunwoody1982} by showing that the cut system of 
edge cuts, see  Example~\ref{exam:edgeends}, has a nested subsystem.  We have proved that any graph with more than one end has a nested subsystem which separates ends and is invariant under automorphisms, see Example~\ref{exam:vertexends} and Theorem~\ref{theo:optimally}. Hence we have a new proof of the main result in \cite{Dunwoody1982}. If the graph is a Cayley graph with more than one end then the tree has infinite diameter and the action is non-trivial. Thus we get  a new and relatively simple proof of Stallings' Structure Theorem. This is presented in detail in \cite{Kroen2009}.

There are different ways of generalizing Stallings' theorem. One option is to drop the assumption of $G$ being finitely generated.
Another option is to consider splittings of finitely generated groups over groups which are not necessarily finite.

There are several ways of how to define ends for non-locally finite graphs (see \cite{Kroen2001a}). The same holds for infinitely generated groups, where we have the further difficulty that without additional assumptions, the Cayley graphs are not necessarily quasi-isometric. But whatever definition one uses for the  ends of non-locally finite graphs, in locally finite graphs this definition should yield Freudenthal's end compactification for a  locally compact space (see \cite{Freudenthal1931,Freudenthal1942,Freudenthal1944}).

One way goes back to Freudenthal and Cohen \cite{Cohen1972} and says that $G$ has more than one end if there is a subset
$A$ for which $A$ and the complement $G \setminus  A$ are both infinite, and the
symmetric difference of $A$ and $Ag$ is finite for all $g$ in $G$.
It follows from the Almost Stability Theorem \cite{Dicks1989} that a group $G$ has more than one end in this sense if and only if $G$ splits over a finite subgroup or $G$ is countably infinite and locally finite. This is a generalization of Stallings' Structure Theorem, because in the finitely generated case the definition above is equivalent to all other definitions of ends of graphs and groups.
A more revealing way of stating this result follows from the Bass-Serre theory discussed above.  Thus
a group has more than one end if and only if it has a non-trivial action on a tree with finite edge stabilizers.

For a group that is not finitely generated there is no obvious way to choose a generating set to construct a Cayley graph.
Stallings' theorem can be formulated as ``A finitely generated group has a Cayley graph with more than one end if and only if it splits over a finite subgroup.'' Here we can just drop the assumption that the group is finitely generated.

\begin{theo}
A group has a Cayley graph with more than one end if and only if it splits over a finite subgroup.
\end{theo}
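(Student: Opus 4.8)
The plan is to prove both implications by combining the structure–tree machinery of the paper with the Bass–Serre description recalled above.

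For the direction ``splitting $\Rightarrow$ Cayley graph with more than one end'', suppose first that $G=A\ast_H B$ with $H$ finite and $[A:H],[B:H]\ge 2$. Take $S=(A\cup B)\setminus\{1\}$ and form $X=\Cay(G,S)$, which is connected and on which $G$ acts freely with one vertex orbit. Let $E$ be the set of $g\notin H$ whose reduced (amalgam) normal form $g=g_1g_2\cdots g_n$ has $g_1\in A\setminus H$, and let $E^{*}$ be the set of those with $g_1\in B\setminus H$. Since every generator is itself a single syllable, right multiplication changes only the tail of a normal form, so $E$ is connected (retract $g_1\cdots g_n$ to $g_1$ one syllable at a time, staying inside $E$), and no element of $E^{*}$ is adjacent to $E$; one checks directly that $VX=E\cup H\cup E^{*}$ with $NE=H$, which is finite. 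As $[A:H],[B:H]\ge 2$, both $E$ and $E^{*}$ are infinite, so $E$ is a vertex‑end cut separating two ends and $X$ has more than one end. The HNN case is the same using Britton's Lemma.

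For the converse, suppose $X=\Cay(G,S)$ has more than one (vertex) end. Then the vertex‑end cuts of Example~\ref{exam:vertexends} form a non‑empty cut system $\cc$, and by Theorem~\ref{theo:main} there is an associated $G$‑tree $T=T(\ce)$. Because $G$ acts freely on $X$, the stabiliser of any finite set of vertices — in particular of a minimal separator $NE\in\cs$ — embeds in $\mathrm{Sym}(NE)$ and is therefore finite, so every edge stabiliser of $T$ is finite. The action of $G$ on $T$ is non‑trivial: $G$ cannot fix a separator‑vertex (its stabiliser is finite while $G$ is infinite), and it cannot fix a block‑vertex $B$, since transitivity of $G$ on $VX$ would then force $B=VX$, making $VX$ a single $\cc$‑block and hence $\cc$‑inseparable, contradicting that $X$ has two ends. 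By the Bass–Serre description stated in the paper, a group acting non‑trivially on a tree with finite edge stabilisers either splits over a finite subgroup — in which case we are done — or is a strictly ascending union $G=\bigcup_n G_n$ of finite edge stabilisers, i.e. $G$ is countably infinite and locally finite.

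It therefore remains to exclude this last case, and this is the technical heart of the argument. A countably infinite locally finite group splits over no finite (indeed no) subgroup, since a non‑trivial amalgam or HNN extension acts on its Bass–Serre tree with unbounded translation length and hence contains an element of infinite order; so the point is purely graph‑theoretic: one must show that no Cayley graph of a locally finite infinite group has more than one vertex end. Given a vertex‑end cut $E$ of $X=\Cay(G,S)$, the subgroup $K=\langle NE\rangle$ is finite and hence contained in a finite subgroup; the idea is to replace $E$ by a $K$‑symmetrised cut and use that a coset of a finite subgroup disjoint from $NE$ cannot meet both $E$ and $E^{*}$, forcing one of $E$, $E^{*}$ to be finite, a contradiction. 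The main obstacle I expect is making this last step fully rigorous when $S$ is infinite, since such cosets need not induce connected subgraphs, so some care is needed in choosing the finite subgroup and in carrying out the symmetrisation; by comparison the $(\Leftarrow)$ direction and the reduction of $(\Rightarrow)$ to this statement about locally finite groups are routine.
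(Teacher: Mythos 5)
Your forward direction is fine and is essentially the paper's argument (the paper takes the generating set $S_1\cup H\cup S_2$, resp.\ $S_1\cup\{t\}$, and you verify connectedness of the two sides and $NE=H$ via normal forms; you should also note that each side actually contains a ray, e.g.\ $a,ab,aba,\dots$, since in a non-locally-finite graph an infinite connected set need not contain one — but this is routine). The converse also starts out as in the paper: the vertex-end cuts of Example~\ref{exam:vertexends} form a non-trivial cut system, Theorem~\ref{theo:main} gives the $G$-tree $T(\ce)$, edge stabilizers are finite because the action on $X$ is free and separators are finite, and the action is non-trivial (your fixed-vertex argument is acceptable; the paper argues this from vertex transitivity and the fact that deleting a separator leaves at least two infinite components).

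The genuine gap is the last step, and you name it yourself: you invoke the dichotomy ``splits over an edge stabilizer or $G$ is a strictly ascending union of subgroups each stabilizing an edge'' and then must exclude the second alternative, i.e.\ show that no Cayley graph of a countably infinite locally finite group has more than one vertex end. That statement is just the theorem itself specialized to locally finite groups, so reducing to it proves nothing by itself, and your proposed symmetrisation over $K=\langle NE\rangle$ is only a sketch whose key step (a coset of a finite subgroup disjoint from $NE$ cannot meet both $E$ and $E^*$) fails as stated when $S$ is infinite, as you acknowledge. Moreover the detour is unnecessary: since $\ce$ consists of minimal cuts, every edge of $T$ is a cut $E$ with $|NE|=\kappa$, its stabilizer $G_E$ preserves $NE$ setwise, and freeness of the $G$-action on $X$ makes $g\mapsto gv$ ($v\in NE$) injective on $G_E$, so $|G_E|\le\kappa$ for \emph{every} edge. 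A strictly ascending infinite chain of subgroups each contained in an edge stabilizer is therefore impossible, and the non-trivial action on $T$ yields an honest splitting over a finite edge stabilizer. This uniform bound (or the paper's direct non-triviality argument) is what your write-up is missing; with it, the locally finite case never arises and the proof closes.
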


\begin{proof}
Suppose $G$ splits over a finite group $H$.  There are two possibilities.
Let $\mathrm{Cay}(G,S)$ denote the Cayley graph of $G$ with respect to generating set $S$. Suppose $G=G_1*_HG_2$ and $[G_i:H]\ge 2$, for $i=1,2$. If $S_i$ is a set of generators for $G_i$ then the graph $X=\mathrm{Cay}(G,S_1\cup H\cup S_2)$ has more than one end. Moreover, if $[G_1:H]=[G_2:H]=2$ then $X$ has two ends, otherwise $X$ has infinitely many ends.  If $G$ is an HNN extension $G = {G_1}*_H$, so that $G_1$ is a subgroup of
$G$ with isomorphic finite subgroups $H$ and $t^{-1}Ht$, then the Cayley graph $X =\mathrm{Cay}(G, S_1\cup \{ t \})$ has more than one end.

If $G$ has a Cayley graph $X$ with more than one end then the cut system in Example~\ref{exam:vertexends} is not-trivial and we can apply Theorem~\ref{theo:main} to get a group action of $G$ on a tree $T$. Then $G$ splits over stabilizers of elements of a cut system (i.e.\ stabilizers of the edges of $T$). The splitting is non-trivial as the graph $X$ is vertex transitive and removing any separator in the cut system will
leave at least two infinite components.
The stabilizers of a cut $A$ is finite, since it is a subgroup of the stabilizer of the finite set $NA$  and the action of $G$ on $X$ is free.
\end{proof}

A splitting of a group over a subgroup is \emph{non-trivial} if it is either an HNN-extension or in the case of a free amalgamated product, the subgroup has at least index two in each factor.
Our results provide the following generalization of Stallings' Theorem to cases when the splitting group is not finite.

\begin{theo}\label{theo:split_vertexstabilizer}
Let a group $G$ act on an infinite  graph with a $G$-invariant nested cut system. If there is a cut $C$ and a $g$ in $G$ such that $g(C)\varsubsetneq C$ or if the action of $G$ on the graph is transitive then $G$ splits non-trivially over a subgroup which has a vertex stabilizer as finite index subgroup.
\end{theo}

\begin{proof}
For any cut system $\cc$ the action of $G$ on $T(\cc )$ is without inversion of edges, because edges of $T(\cc)$ connect blocks with separators and no block can be mapped to a separator and vice versa. Thus we can apply Bass-Serre and obtain a splitting for $G$.

A cut in $\cc$ corresponds to an edge of $T(\cc)$, see Section~\ref{sec:cuts_trees}.
If $g(C\cup NC)\subset C$ then $g$ translates a double-ray in $T(\cc)$ which contains $C$. 
Thus the splitting is non-trivial. A splitting group will be the stabilizer $G_C$ of $C$. The group $G_C$ also stabilizes $NC$.  Since $NC$ is finite, $G_C$
contains a subgroup of finite index which fixes each vertex in $NC$.  Thus $G_C$ has a subgroup of finite index which fixes a vertex of $X$.
This subgroup may well be a proper subgroup of the stabilizer of this vertex.

If instead of the condition $g(C)\varsubsetneq C$ we assume that $G$ acts transitively on the graph then we know from Lemma~\ref{lemm:noleaves} that $T(\cc)$ has no leaves. Note that in general In general $G$ does not  act transitively on the blocks or the separators. But if we choose an edge $e$ of $T(\cc)$ which corresponds to a cut $C$ then there are elements $g_1,g_2$ of $G$ such that $g_1(NC)\subset C\cup NC$, $g_1(NC)\ne NC$ and $g_2(NC)\subset C^*\cup NC$, $g_2(NC)\ne NC$. Thus each of the two components of $T(\cc)-e$ contains one of the edges $g_1(e)$ and $g_2(e)$. Two of the three edges $e, g_1(e),g_2(e)$ have to point in the same direction. That is, there is an $h$ in $G$ which maps one onto the other and which translates a double ray which contains them. Thus we have the same situation as in the first case.
\end{proof}

\begin{exam}
The second part of Theorem~\ref{theo:split_vertexstabilizer} does not hold in general if we consider almost transitive graphs (i.e. there are finitely many orbits on the vertices) instead of transitive graphs. Consider the graph consisting of two (say countably) infinite complete graphs $K_1,K_2$ which share precisely one vertex $x$. This vertex is a cut vertex and admits a cut system consisting of just two cuts. There are groups acting on this graph with three (or two) orbits such that the action is transitive on $VK_1\setminus \{x\}$ and transitive on $VK_2\setminus \{x\}$ (or transitive on $VK_1\setminus \{x\}\cup VK_2\setminus \{x\}$), but the action admits no splitting.
\end{exam}

Another possible application of vertex cuts is to the Kropholler Conjecture \cite {Niblo2006}.
This arose out of work of Kropholler  \cite{Kropholler1990} on algebraic versions of the torus theorem for $3$-manifolds. 

Let $H$ be a subgroup of a group $G$. We regard $G$ as a $G$-set via the action of $G$ on the left.
A subset $A$ of a $G$-set  is called \emph{$H$-finite}  (or \emph{right $H$-finite}) if it is contained in the union of finitely many  $H$-orbits, otherwise $A$ is called \emph{$H$-infinite} (or \emph{right $H$-infinite}). Being $H$-finite is equivalent to being contained in the union of finitely many right $H$-cosets.
Analogously we call $A$ \emph{left $H$-finite} if $A$ is contained in the union of finitely many left $H$-cosets, otherwise it is called \emph{left $H$-infinite}.

\begin{conj}[Kropholler]
Let $A$ be a subset of a finitely generated group $G$ and let $H$ be a subgroup of $G$ such that $AH=A$. Let $A$ and $G\setminus A$ be $H$-infinite and let $Ag\setminus A$ be  $H$-finite, for all $g\in G$. Then $G$ admits a non-trivial splitting over a group which is commensurable with a subgroup of $H$.
\end{conj}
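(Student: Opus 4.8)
The plan is to run the construction of this paper in a form relative to $H$, replacing ``finite'' throughout by ``$H$-finite'', i.e. contained in finitely many $H$-orbits (and, where $G$-translates intervene, by ``$H^{\gamma}$-finite'' for the relevant conjugate $H^{\gamma}=\gamma H\gamma^{-1}$). Fix a finite generating set $S$ of $G$ and let $X=\Cay(G,S)$, so $VX=G$ and $G$ acts freely on the left by graph automorphisms. After the standard reformulation of an $H$-almost invariant set we may assume $HA=A$, so that $H$ fixes $A$ setwise and $A$ descends to a subset $\bar A$ of $H\backslash G$, and that $Ag\triangle A$ is $H$-finite for every $g$, so that $\bar A$ is an almost invariant subset of the right $G$-set $H\backslash G$. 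A direct computation then shows that the edge coboundary $\delta A$ is $H$-invariant and $H$-finite; hence $NA$ lies in finitely many right cosets of $H$ and is $H$-invariant, while $A$ and $A^{*}$ are $H$-infinite, in particular infinite. One cannot shortcut this by passing to the locally finite quotient $H\backslash X$ and applying Theorem~\ref{theo:main1}: left multiplication does not descend to $H\backslash X$, so only $N_{G}(H)/H$ acts there, and it is precisely the need to retain the full $G$-action that forces the relative framework.

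The first step is to construct a $G$-invariant relative cut system $\cc$: take its members to be the connected components, with boundary contained in finitely many cosets of some conjugate $H^{\gamma}$, of the corners built from the $G$-translates $\{\gamma A,\gamma A^{*}\mid\gamma\in G\}$ and their iterated intersections, keeping only those that are large, in the sense that neither the set nor its $*$-complement is so contained. One then verifies analogues of (A1)--(A3); the point is that the counting arguments behind Lemma~\ref{lemm:intersection} and the proof for Example~\ref{exam:finitecuts} survive with ``number of $H$-orbits of the boundary'' in place of $|NE|$, so that corners of admissible cuts stay admissible, provided this notion of size can be made coherent across the conjugates $H^{\gamma}$ that occur.

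Granting such a system, one runs the optimally-nested-cuts machinery: minimise the invariant $m(E)$ (now counting $H$-orbits of minimal cuts not nested with $E$), extract the optimally nested $G$-invariant subsystem as in Theorem~\ref{theo:optimally}, enlarge it as in Lemma~\ref{lemm:blocks2}, and form the associated $G$-tree $T=T(\ce)$ of Section~\ref{sec:orders}. Since $X$ is vertex-transitive and $A,A^{*}$ are both $H$-infinite, the wall $\{A,A^{*}\}$ cannot be collapsed away, so for a suitable edge $e$ of $T$ both components of $T\setminus\{e\}$ meet the $G$-orbit of $e$; by the Bass--Serre criterion recalled in the Applications section the action of $G$ on $T$ is non-trivial and $G$ splits over an edge stabiliser $G_{E}$. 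Finally $G_{E}$ stabilises $NE$, which is a finite union of right cosets of a conjugate $H^{\gamma}$; arguing as in the locally finite case (cf.\ \cite{Dicks1989}) one shows that $G_{E}$ virtually normalises $H^{\gamma}$, hence permutes those finitely many cosets, and therefore has a finite-index subgroup contained in $H^{\gamma}$. Thus $G_{E}$ is commensurable, in $G$, with a subgroup of a conjugate of $H$, which after routine bookkeeping yields the splitting asserted in the conjecture.

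The serious obstacle --- and the reason the statement remains a conjecture --- lies in the relative forms of Lemma~\ref{lemm:finitely_many} and Lemma~\ref{lemm:finite}. The assertion that a minimal cut is nested with all but $H$-finitely many minimal cuts rests on the Thomassen--Woess bound on the number of $k$-separators separating a fixed pair of vertices, which genuinely uses local finiteness and has no unconditional substitute once ``finite'' is weakened to ``$H$-finite''. When $H$ is small --- slender, or virtually polycyclic --- one can bound these intersection patterns and the argument closes, overlapping with the Dunwoody--Swenson algebraic torus theorem; but for a general finitely generated $H$ the extracted nested subsystem $\ce$ may be empty, so the naturally occurring higher-dimensional structure need not collapse to a tree, and a genuinely new idea is required. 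A secondary difficulty is the bookkeeping in the final step, since pinning down $G_{E}$ up to commensurability with a subgroup of $H$ again requires some control of how $G_{E}$ sits relative to the conjugates $H^{\gamma}$.
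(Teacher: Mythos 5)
You have not proved the statement, and neither does the paper: this is stated there as a \emph{conjecture}, and the paper's discussion of it is explicitly inconclusive. The paper's intended route is different from yours: it would realise the hypothesis geometrically by building a $G$-graph $X$ whose vertex set is the set of left cosets of $H$ (so that ``$H$-finite'' becomes genuine finiteness of vertex sets) and then apply its splitting theorem for graphs with more than one vertex end; the obstruction, which the authors state they cannot overcome, is that in the graph they can construct the boundary $NE$ of the candidate cut is only of \emph{finite diameter}, not finite, and the argument closes only when $G$ commensurizes $H$ (where $X$ can be taken locally finite) --- a case that was already known. Your route instead keeps the Cayley graph of $G$ and relativises the whole cut-system machinery, replacing ``finite'' by ``$H$-finite''. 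As you yourself concede, this breaks precisely at the relative analogues of Lemma~\ref{lemm:finitely_many} and Lemma~\ref{lemm:finite}: the Thomassen--Woess style counting that makes $m(E)$ finite has no substitute once separators are merely $H$-finite, so the existence of optimally nested cuts, the invariant subsystem $\ce$, and the tree $T(\ce)$ are all unavailable. A proposal whose central step is acknowledged to be missing is a research programme, not a proof.

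Two further points, both conditional on machinery you have not constructed, would also need care. First, non-triviality of the action: it is not enough that $A$ and $A^*$ are $H$-infinite; you must show that some cut derived from $A$ (or from the blocks/ends it separates) survives into the nested subsystem $\ce$, which in the paper is exactly what the enlargement argument of Lemma~\ref{lemm:blocks2} delivers and which again rests on the finiteness lemmas you lack. Second, the commensurability bookkeeping at the end is cleaner in the paper's coset-graph formulation: if $VX$ consists of cosets of $H$ and $G_E$ stabilises the finite set $NE$, then a finite-index subgroup of $G_E$ fixes a single coset $gH$ and hence lies in $gHg^{-1}$; your appeal to $G_E$ ``virtually normalising'' $H^{\gamma}$ is neither needed nor justified as stated. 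The only honest unconditional conclusion available from this paper is the commensurizer case; the general statement remains open, and your write-up should present it as such rather than as a theorem with a deferred lemma.
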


Theorem~\ref{theo:split_vertexstabilizer} implies the following.

\begin{coro}\label{coro:krop}
Let $A$ be a subset of a finitely generated 
group $G$ and let $H$ be a subgroup of $G$ such that $AH=A$. Let $A$ and $G\setminus A$ be left $H$-infinite and let $Ag\setminus A$ be left $H$-finite, for all $g\in G$. Then $G$ admits a non-trivial splitting over a group which is commensurable with a subgroup of $H$.
\end{coro}

\begin{proof}
Let $S$ be a finite generating set and let $X$ be the quotient of $\mathrm{Cay}(G,S)$ by the action of $H$ on the right. That is, the vertices are the left cosets of $H$ and two cosets are adjacent (as vertices of $X$) if they contain adjacent vertices
 in $\mathrm{Cay}(G,S)$. The condition $AH=A$ means that $A$ is a union of left $H$-cosets. The graph $X$ is infinite (possibly non-locally finite), $G$-transitive and the projection of $A$ is infinite and has finite boundary. Since $X$ is transitive and since there is an infinite set (the set $A$) with finite boundary and infinite complement, $X$ it has more than one end and thus a non-trivial $G$-invariant cut system. The statement now follows from Theorem~\ref{theo:split_vertexstabilizer}.
\end{proof}

If one could get a $G$-graph for the case of Kropholler's conjecture with the same properties as in Corollary~\ref{coro:krop}, then the conjecture would
follow.  One can get quite a long way in this direction.
There will be a graph $X$ in which $VX$ is the set of left cosets of $H$. The set $A$ will again determine a set $E$ of vertices of this
graph.  The set $NE$ is  contained in finitely many $H$-orbits and since $H$ fixes a vertex of $X$, $NE$ has finite diameter in $X$.
Both $E$ and $E^*$ have infinite diameter.  This implies that both $E$ and $E^*$  contain  rays.  For more details we refer to \cite[Theorem~3.5]{Kroen2008}.  
  We have not been able to show that such a graph $X$ can be constructed  in which  $NE$ is  finite, rather than just of finite diameter.
  If $G$ is the commensurizer of $H$ then one can construct $X$ so that it is locally finite.  A subset of $VX$ will then be  finite if and only if it has finite diameter.
  Thus the conjecture is true in this case. This was well known \cite {Dunwoody2000}.

\subsection{Applications in infinite graph theory.} There are several applications of vertex cuts in infinite graph theory, in particular when classical structure tree theory is used and results are generalized from locally finite to non-locally finite graphs.

In \cite{Hamann2010a} Hamann shows that an almost transitive end-transitive graph is quasi-isometric to a tree. Woess conjectured in \cite{Woess1989} that infinitely ended graphs are quasi-isometric to a tree if the stabilizer of some end acts transitively on the set of vertices. This was proved by M\"oller for locally finite graphs in \cite{Moeller1992} and generalized to non-locally finite graphs with infinitely many edge-ends in \cite{Kroen2001b}. Hamann uses vertex cuts to show that this also holds for the (more general) case of vertex ends.

A graph is called \emph{connected-homogeneous} if every isomorphism between two finite connected subgraphs extends to an automorphism of the whole graph. In
\cite{Hamann2010b} Hamann and Hundertmark use the theory of the present paper to classify connected-homogeneous digraphs and show that if their underlying undirected graph is not connected-homogeneous then they are highly arc transitive.

A graph is \emph{$k$-CS-transitive} if for any two connected isomorphic subgraphs of order $k$ there is an automorphism between them which extends to the whole graph.
Hamann and Pott \cite{Hamann2009} use vertex cuts in order classify $k$-CS-transitive graphs for all $k$ and non-locally finite distance transitive graphs which have more than one end.

\section*{Acknowledgements}

The authors first met at the conference on Totally Disconnected Groups, Graphs and Geometry at Blaubeuren (Germany) in May 2007. They are very grateful to the organizers for inviting them to the meeting.
This project grew from a problem raised at a problem session at that conference.

The first author attended lectures by P.~Papasoglu at Heriot Watt University in September 2008 in which he
presented a proof of Stallings' Theorem, arising from  new work of himself and A.~Evangelidou \cite{Evang2010},
which uses some similar techniques to that of this paper.

The first version of this paper was posted as arXiv:0905.0064 in May, 2009.�

R.\ Diestel has organized a seminar of the Univ.\ of Hamburg on the present paper which took place in B\"usum (Germany) from 29 March to 1 April 2010. The authors wish to thank the participants E.~Bu\ss, J.~Carmesin, J.-O.~Fr\"ohlich, M.~Hamann, K.~Heuer, J.~Hiob, F.\ Hundertmark, M.~Kriesell, M.~M\"uller, H.~Oberkampf, J.~Pott, T.S.~R\"uhmann and G.~Zetzsche who found several typos and mistakes and made numerous valuable contributions.
M.~Hamann found a mistake in a theorem in one of the previous versions of the paper. M.~Kriesell has pointed out that one of the axioms of cut-systems in a previous version of the paper was implied by the other axioms.

R. Diestel's great interest in the subject has been such that he not only provided the authors, in 2010, with a good amount of further suggestions and comments. Subsequently, he elaborated together with his co-workers his own, similar, version of an extension of Tutte's tree decomposition to finite graphs with arbitrary connectivity \cite {CD}.

We thank Wolfgang Woess for his valuable suggestions, and his help and encouragement.

\bigskip
\halign{ # \hfil \qquad \qquad \qquad \qquad & # \cr

%\affiliationone{% in this example, two authors share an institution
  M.J. Dunwoody & B.Kr\"on \cr
  %  Faculty of Mathematical Studies,
    University of Southampton & University of Vienna \cr
   % School of Mathematics & Faculty of Mathematics \cr
 %   Southampton, SO17 1BJ & A-1090 Vienna\cr
   % England & Austria\cr
     M.J.Dunwoody@soton.ac.uk &  bernhard.kroen@univie.ac.at \cr
     }
     \end {document}